\renewenvironment{leftbar}{\MakeFramed {\advance\hsize-\width \FrameRestore}}{\endMakeFramed}
\newcounter{lpcounter}
\renewcommand\thelpcounter{\Roman{lpcounter}}
\newenvironment{lp}[1]{%
\vspace{1mm}\pagebreak[1]\refstepcounter{lpcounter}\noindent\adjustbox{bgcolor=black!5,minipage=[t]{\linewidth}}{\hrule\vspace{1mm}
\noindent{\bf\footnotesize Linear Problem \thelpcounter.} {\footnotesize #1.}
\vspace{1mm}\hrule}\vspace{1mm}

\begin{leftbar}\noindent\ignorespaces}
{\nopagebreak\end{leftbar}\nopagebreak\vspace{1mm}\hrule\pagebreak[1]\vspace{3mm}}
\newcounter{opcounter}
\renewcommand\theopcounter{\Roman{opcounter}}
\newenvironment{op}[1]{%
\vspace{1mm}\pagebreak[1]\refstepcounter{opcounter}\noindent\adjustbox{bgcolor=black!5,minipage=[t]{\linewidth}}{\hrule\vspace{1mm}
\noindent{\bf\footnotesize Optimization Problem \theopcounter.} {\footnotesize #1.}
\vspace{1mm}\hrule}\vspace{1mm}

\begin{leftbar}\noindent\ignorespaces}
{\nopagebreak\end{leftbar}\nopagebreak\vspace{1mm}\hrule\pagebreak[1]\vspace{3mm}}
\newcommand{\overbar}[1]{\mkern 1.5mu\overline{\mkern-1.5mu#1\mkern-1.5mu}\mkern 1.5mu}
\newtheorem{theorem}{Theorem}[section]
\newtheorem{lemma}[theorem]{Lemma}
\newtheorem{corollary}[theorem]{Corollary}
\newtheorem{proposition}[theorem]{Proposition}
\newtheorem{definition}[theorem]{Definition}
\theoremstyle{definition}
\DeclareMathOperator{\diam}{diam}
\DeclareMathOperator{\supp}{supp}
\DeclareMathOperator{\minp}{min_{>0}}
\newcommand{\df}[1]{{\bf #1}}
\newcommand{\fs}{\scriptscriptstyle}
\definecolor{bbord}{RGB}{70,78,82}     % node border
\definecolor{ffill}{RGB}{171,213,238}  % node fill
\definecolor{eedge}{RGB}{27,128,196}   % major edge
\tikzset{ 
    edge/.style={
        draw=black,
        line width=0.5pt,
        ->,             % alwways draw arrow tip
        >=stealth,      % style of arrow tip
        shorten >=2pt,  % shorten a bit, so that it doesn't quite
        shorten <=2pt,  % touch the nodes
        preaction={     % provide a wider white background for
          draw=white,   % each arrow for the intersection effect
          line width=3pt,
          -,            % no arrow tip for background
          }
    },
    vertex/.style={
        anchor=center,
        text=black,
        inner sep=5pt,
        shape=circle,
        draw=black,          % border
        fill=white,          % background
        font=\footnotesize, 
        minimum height=12pt  % assign minimum height to make nodes equally
                             % large, regardless of letter size
    },
    hidden/.style={
        draw=none,
        fill=none
    },
    weight/.style={
      font=\footnotesize
    }
}
\title{Property A and duality in linear programming}
\date{\today}
\author{G.~C.~Bell}
\address{Mathematics and Statistics\\ UNC Greensboro\\ Greensboro, NC 27402 USA}
\email{gcbell@uncg.edu}
\author{A.~Nagórko}
\address{Department of Mathematics\\ University of Warsaw\\ ul. Stefana Banacha 2\\ 02-097 Warszawa, Poland}
\email{amn@mimuw.edu.pl}
\subjclass[2020]{Primary: 90C05; Secondary: 90C46; 20F69; 05C48}
\keywords{Property A; linear programming; strong duality; Cheeger constant; expanders}
\begin{document}

\begin{abstract}
Property A is a form of weak amenability for groups and metric spaces introduced as an approach
  to the famous Novikov higher signature conjecture, one of the most important unsolved problems in topology.

We show that property A can be reduced to a sequence of linear programming optimization problems
  on finite graphs.
We explore the dual problems, which turn out to have interesting interpretations as combinatorial problems concerning the maximum total supply of flows on a network.

Using isoperimetric inequalities, we relate the dual problems to the Cheeger constant of the graph and
explore the role played by symmetry of a graph to obtain a striking characterization of the difference between an expander and a graph without property A.
Property A turns out to be a new measure of connectivity of a graph that is relevant to graph theory.

The dual linear problems can be solved using a variety of methods, which we demonstrate
  on several enlightening examples.
As a demonstration of the power of this
linear programming approach we give elegant proofs of theorems of Nowak and Willett about
  graphs without property A.

\end{abstract}
\maketitle

\section{Introduction}

% Why do we care (about Property A)?

Property~A is a form of weak amenability for groups and metric spaces introduced by G.~Yu~\cite{yu2000} at the turn of the century as an approach to the famous Novikov higher signature conjecture. First stated over fifty years ago, the Novikov conjecture asserts that the so-called higher signatures determined by a discrete group $\Gamma$ are homotopy invariant and remains one of the most important unsolved problems in topology~\cite{frr1995}.

Yu proved that a discrete metric space $X$ with property A admits a coarsely uniform embedding into Hilbert space and that this implies the coarse Baum-Connes conjecture. In the case that $X$ is the Cayley graph of a finitely generated group~$\Gamma$, this implies the Novikov conjecture for $\Gamma$. 

A great deal of attention has been devoted to the study of property~A because of the number of large classes of spaces known to have property~A and the robust collection of operations on groups and spaces under which property~A is closed. 
Among the properties of metric space that imply property~A are
%Property A is the focal point in the above chain of implications.
%Many invariants were developed that imply Property A, e.g.
  finite asymptotic dimension~\cite{higson-roe2000},
  finite decomposition complexity~\cite{guentner-tessera-yu2012},
  asymptotic property C~\cite{dranish2000}, and
  polynomial dimension growth~\cite{dranish2006}.
Moreover property~A is known to satisfy more so-called permanence properties than, for example, finite asymptotic dimension, the rate of dimension growth, or uniform embeddability in Hilbert space~\cite{guentner2014}.

While property~A is defined for metric spaces, the most important use case
  is that of Cayley graphs of countable groups.
For a graph $G$ and a nonnegative number $S\ge 0$ called the scale, 
we define $\varepsilon_{G, S}$
  to be the minimal variation of probability measures on $G$ that are supported at scale $S$ (see Optimization Problem~\ref{op:measures}).
The graph $G$ has property~A if and only if $\lim_{S \to \infty} \varepsilon_{G, S} = 0$, i.e. if the minimal variation of probability measures tends to zero when the scale goes to infinity.

This paper stems from the observation that
  $\varepsilon_{G, S}$ can be computed with linear programming.
We use a standard trick (Theorem~\ref{thm:property A limit of finite graphs})
  to reduce our considerations to finite graphs so we can
  apply linear optimization to the study of property~A.

Beginning with the linear problem corresponding to the minimal variation of 
probability measures $\varepsilon$, we construct its dual problem. % to the minimal variance $\varepsilon$ of probability measures.
This dual problem has a interesting interpretation as a combinatorial problem concerning the maximum supply $\sum_i \eta_i$ of flows on a network.
By strong duality, we have $\varepsilon = \sum_i \eta_i$.

There are many positive results about property~A such as the permanence properties alluded to above.
This has to do with the fact that to prove that $\varepsilon_{G, S} \to 0$
  it is enough to find sufficiently nice upper bounds on $\varepsilon_{G, S}$.
Computing $\varepsilon_{G, S}$ is a minimization problem so upper bounds are easy to get (e.g. any admissible solution of the problem provides an upper bound). 
On the other hand, constructions of graphs that fail to have property~A % (e.g. constructions of graphs without Property A) 
are scarce and either rely on something stronger (such as failure of the space to embed in Hilbert space) or rely on ad hoc methods. Indeed, to show that $\lim \varepsilon_{G, S} > 0$ we need information about the
  optimal value of $\varepsilon_{G, S}$, which is much harder to determine.
But, when we pass to the dual problem, which concerns maximizing $\sum_i \eta_i$, lower bounds
  are easy to get.
The open status of the Novikov conjecture means that negative results concerning property~A are particularly enticing.
Using the dual problem, we characterize the gap between the Cheeger constant and variation of probability measures to show the exact place where such negative examples may be found.
  
As an application of the dual problem, we compute the values of $\varepsilon$ (minimal variation of probability measures) for $n$-dimensional 
cube graphs and for graphs with large girth.
These computations are effortless and as corollaries we obtain a theorem of Nowak~\cite{nowak2007} that a disjoint union of $n$-cubes does not have property~A and a theorem of Willett~\cite{willett2011} that a disjoint union of regular graphs with increasing girth does not have property~A.

We refer to Figure~\ref{fig: preliminary primal and dual problems} as we describe the flow and contents of the present paper. 
Examining the explicit computations in the dual problem (Pseudo Flows (II)) leads us to a natural relaxation in terms of Isoperimetric Inequalities (III). 
We pass to the dual problem to Isoperimetric Inequalities (III), which is again on the side of primal/minimization problems, and show that it is equivalent to a well-known reformulation of property~A in the language of partitions of unity (see Partitions of Unity (IV)).
We show more: the relaxation (III) is in fact the projection of the dual problem (II)
  into a lower dimensional space.
In other words, on the dual side, the ``probability measures'' formulation is a lift of the ``partitions of unity'' formulation of property~A.
Both dual problems are interesting on their own: the dual to ``probability measures'' has more variables (but is polynomial in size with respect to the size of the graph),
while the projection---the dual to ``partitions of unity''---has fewer variables, but an exponential number of constraints.

{
	% https://tex.stackexchange.com/questions/346106/creating-cube-with-tikz
	% https://tex.stackexchange.com/questions/8890/tikz-how-to-draw-boxes-around-set-of-nodes
	\tikzset{%
		node distance=15mm,
		lp/.style={
			rectangle,
			draw=gray,
			thick,
			fill=gray!20,
			text width=25mm,
			align=center,
			% rounded corners,
			minimum height=1cm
		},
		every node/.style={%
			align=center,
			fill=white,
			font=\small
		},
		every edge/.append style={every node/.style={font=\footnotesize, fill=white}}
	}
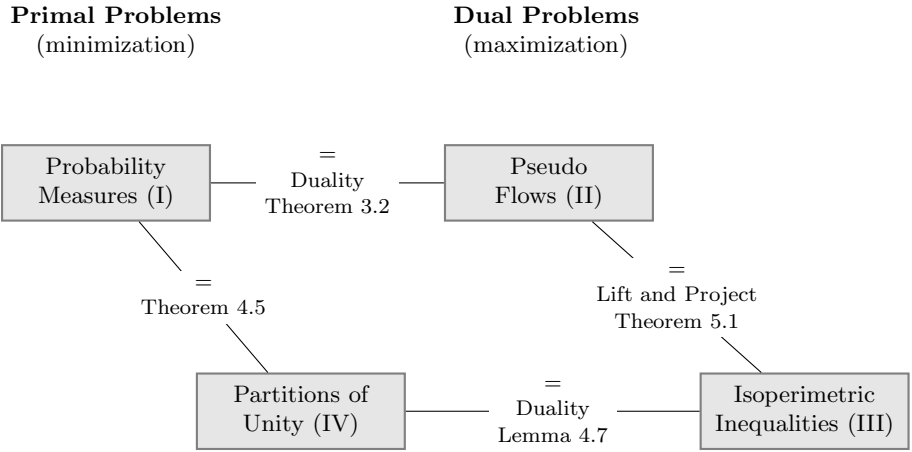
\begin{figure}[H]
		\begin{center}
			\begin{tikzpicture}
				\node (p) {\bf Primal Problems\\ (minimization)};
				\node[right=28mm of p] (d) {\bf Dual Problems\\ (maximization)};
				
				\node[lp, below=2cm of p.west, anchor=west] (p1) {Probability Measures (\ref{lp:measures})};
				\node[lp, below right=2cm and -2mm of p1] (p2) {Partitions of Unity (\ref{lp:partition})};
				
				\node[lp, below=2cm of d.west, anchor=west] (d1) {Pseudo Flows (\ref{lp:pseudo-flows})};
				\node[lp, below right=2cm and 6mm of d1] (d2) {Isoperimetric Inequalities (\ref{lp:isoperimetric})};
				\path (p1) edge node[align=center] {$=$\\ Theorem~\ref{thm:measures equals partition}} (p2);
				\path (p1) edge node[align=center] {$=$\\ Duality\\ Theorem~\ref{thm:duality}} (d1);
				\path (d1) edge node[align=center] {$=$\\ Lift and Project\\ Theorem~\ref{thm:lift and project}} (d2);
				
				\path (p2) edge node[align=center] {$=$\\ Duality\\ Lemma~\ref{lem:isoperimetric dual to partition}} (d2);
\end{tikzpicture}
		\end{center}
		\caption{Exploring four problems in the primal and dual realms.}
		\label{fig: preliminary primal and dual problems}
	\end{figure}
}

The set of optimal solutions to a linear problem is a convex polyhedron.
An action on the graph $G$ by isometries gives rise to an action on the vertices of this polyhedron
 and to additional symmetry constraints that are satisfied by the solution that is in its center of mass. 
If the action is transitive on both edges and vertices, 
  then for the Isoperimetric Inequalities problem
  these symmetry constraints reduce the problem to a single variable $\eta$
  and it follows directly from the problem definition that its optimal 
  solution is equal to $\frac{|V|}{|E|} \gamma(G, S)$, where $\gamma(G, S)$ is the Cheeger constant of $G$ at scale $S$ (cf. Definition~\ref{def:cheeger at scale S}).

This shows two things.
If the graph is symmetric enough, then 
  there is no difference between the expanding property (Cheeger constant $\geq \delta$ at all scales, for some $\delta > 0$) and lack of property~A.
Note that the symmetry must be present in finite convex subgraphs of the graph:
  infinite trees have large Cheeger constants at a fixed scale, yet they satisfy property~A.
This agrees with approaches currently known in the literature, where
  groups without property~A (Gromov monster groups~\cite{gromov2003})
  are constructed by showing that they contain expanders in their Cayley graphs.

If the graph is not symmetric, then we can still put symmetry constraints
  into the linear programming formulation to compute the Cheeger constant at scale $S$ for the graph, see Figure~\ref{fig: primal and dual problems}.
We study this problem, which we call the Uniform Flows problem (\ref{lp:uniform}).
Since it is a constrained dual (maximization) problem, the Cheeger constant will be smaller than the variation of probability measures $\varepsilon$.
To understand the difference between these two, we constructed the dual problem to the Cheeger constant problem to express it in the language of probability measures.
It turns out that it is equivalent to a problem, which we call Mean Property~A (\ref{lp:mean}), in which the functionals are probability measures only on average and their variation is bounded by $\varepsilon$ only on average.
Hence the expanding property of a graph contradicts not only property~A, but also this much weaker mean property~A (see Theorem~\ref{thm:mean A}), leaving plenty of room for negative examples that are not expanders.

In the paper we provide the details of many examples, computed with different techniques.
With the exception of the experimental results (which are not the topic of this paper),
  none of these are computed with the simplex method.

There is a notable difference between the Cheeger constant at scale $S$ and the regular Cheeger constant of the graph.
Using a lift of the linear formulation of the Cheeger constant at scale $S$ problem to
  the problem about pseudo flows we get a polynomial time algorithm for its computation, while computation of the Cheeger constant is NP-hard~\cite{leightonrao1999}.

This paper opens new areas of research.
The first area is in geometric group theory, where some of the main interest in property~A lies. One could ask, for example: 
How big is the gap between the expanding property of a graph (i.e. the Cheeger constant)
and property~A? 
How can this gap be exploited to construct new examples of graphs without property~A?
This question can be approached from both the theoretical and the experimental angle.
For example, for small values of $n$ and $S$, computations indicate that there is a large discrepancy between the expected value of the Cheeger constant and of the expected value of the variation of probability measures for random graphs on $n$ vertices with edge density $d$ at scale~$S$.

The second area is in graph theory.
Variation of probability measures estimates graph connectivity in a manner similar to the Cheeger constant, but is more resilient to local defects of the graph, e.g. it is not ruined by a single isolated vertex (which immediately makes the Cheeger constant zero).
How is this new connectivity measure relevant to graph theory?
Also in the opposite direction, in what ways can the wealth
 of knowledge about the Cheeger constant be transferred into geometric group theory?

{
	% https://tex.stackexchange.com/questions/346106/creating-cube-with-tikz
	% https://tex.stackexchange.com/questions/8890/tikz-how-to-draw-boxes-around-set-of-nodes
	\tikzset{%
		node distance=15mm,
		lp/.style={
			rectangle,
			draw=gray,
			thick,
			fill=gray!20,
			text width=25mm,
			align=center,
			% rounded corners,
			minimum height=1cm
		},
		every node/.style={%
			align=center,
			fill=white,
			font=\small
		},
		every edge/.append style={every node/.style={font=\footnotesize, fill=white}}
	}
	
	\begin{figure}[t]
		\begin{center}
			\begin{tikzpicture}
				\node (p) {\bf Primal Problems\\ (minimization)};
				\node[right=28mm of p] (d) {\bf Dual Problems\\ (maximization)};
				
				\node[lp, below=2cm of p.west, anchor=west] (p1) {Probability Measures (\ref{lp:measures})};
				\node[lp, below right=2cm and -2mm of p1] (p2) {Partitions of Unity (\ref{lp:partition})};
				
				\node[lp, below=2cm of d.west, anchor=west] (d1) {Pseudo Flows (\ref{lp:pseudo-flows})};
				\node[lp, below right=2cm and 6mm of d1] (d2) {Isoperimetric Inequalities (\ref{lp:isoperimetric})};
				
				\node[lp, below=80mm of p1] (p5) {Mean Property A (\ref{lp:mean})};
				
				\node[lp, below=35mm of d2] (d4) {Sparsest Cut (Section~\ref{sec:sparsest cut})};
				\node[lp, below=80mm of d1] (d5) {Uniform Flows (\ref{lp:uniform})};
				\node[lp, below=35mm of d4] (d6) {Cheeger Constant (Section~\ref{sec:cheeger})};
				
				\node[above left=1mm and 1mm of d] (sep top) {};
				\node[below left=31mm and 1mm of d5] (sep bot) {};
				
				\path (sep top) edge[dashed, draw=gray] (sep bot);
				
				\path (p5) edge node[align=center] {$=$\\ Duality\\ Theorem~\ref{thm:partition of unity dual}} (d5);
				\path (p1) edge node[align=center] {$\geq$\\ Relaxation} (p5);
				
				\path (p1) edge node[align=center] {$=$\\ Theorem~\ref{thm:measures equals partition}} (p2);
				\path (p1) edge node[align=center] {$=$\\ Duality\\ Theorem~\ref{thm:duality}} (d1);
				\path (d1) edge node[align=center] {$=$\\ Lift and Project\\ Theorem~\ref{thm:lift and project}} (d2);
				
				\path (d1) edge node[align=center] {$\geq$\\ Fix $\eta_i = \eta$\\ Fix $\kappa_{ij} = \frac 1{|E|}$} (d5);
				
				\path (d2) edge node[align=center] {$\geq$\\ Fix $\eta_i = \eta$} (d4);
				\path (d4) edge node[align=center] {$\geq$\\ Fix $\kappa_{ij} = \frac 1{|E|}$} (d6);
				\path (d5) edge node[align=center] {$=$\\ Lift and Project} (d6);
				
				\path (p2) edge node[align=center, near end] {$=$\\ Duality\\ Lemma~\ref{lem:isoperimetric dual to partition}} (d2);
				
			\end{tikzpicture}
		\end{center}
		\caption{The road map of the paper: relationships between most important linear problems and optimization problems.}
		\label{fig: primal and dual problems}
	\end{figure}
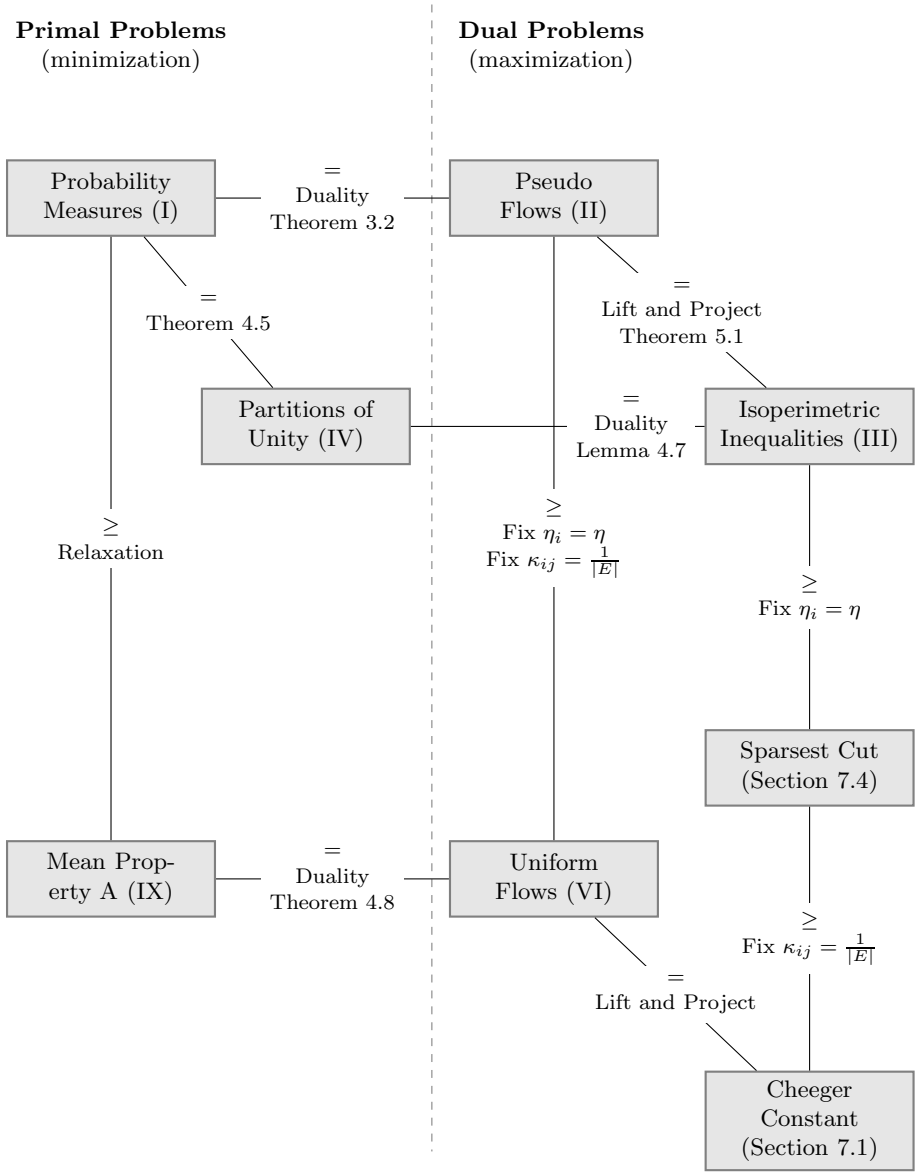
}
\section{Property A as a linear problem}

Our first step is to reduce property~A to a linear programming optimization problem.
While property~A is defined for metric spaces, the most important use case is that of countably infinite graphs with a path-length metric, and we define it in this setting.

We will always assume that our graphs $G = (V, E)$ are oriented ($E \subset V \times V$) with no self-loops.
If $ij \in E$ is used to denote an edge, then $i$ and $j$ are used to denote the initial and terminal vertex of $ij$, respectively.
The distance between vertices the $i$ and $j$ in the path-length metric is the length of a shortest undirected path between $i$ and $j$, which is measured by the number of edges. 
We allow infinite distance.

\begin{definition}
  A \df{scale} on a graph $G = (V,E)$ is a family 
  \[
    \mathcal{S} = \{ S_i \colon S_i \subset V \}_{i \in V}
  \]
  of subsets of $V$, one set per vertex, or an integer $S \geq 0$
  with the associated family
  \[
    \{ S_i = B(i, S) \}_{i \in V},
  \]
  of (closed) balls of radius $S$, centered at the vertices of $G$.
\end{definition}

\begin{op}{Minimal variation of probability measures at scale $\mathcal{S}$}\label{op:measures}
	Let $G = (V, E)$ be a graph and let $\mathcal{S} = \{ S_i \}_{i \in V}$ be a scale on $G$.
	Find the minimal $\varepsilon = \varepsilon_{\mathcal{S}, G}$ (variation) and a family $\{ \xi_i \colon V \to \mathbb{R} \}_{i \in V}$ of functionals (probability measures) such that
	\begin{enumerate}
		\item each $\xi_i$ is a probability measure, i.e.
		\[\| \xi_i \|_1 = 1 \text{ and } \xi_i \geq 0 \text{ for each } i \in V,\]
		\item the variation on each edge $ij$ does not exceed $\varepsilon$, i.e.
		\[
		\| \xi_i - \xi_j \|_1 \leq \varepsilon \text{ for each } ij \in E,
		\]
		\item each $\xi_i$ is supported in $S_i$, i.e.
		\[
		\supp \xi_i = \{ j \in V \colon \xi_i(j) > 0\} \subset S_i \text{ for each } i \in V.
		\]
	\end{enumerate}
\end{op}

Generally the graph $G$ in Optimization Problem~\ref{op:measures} is infinite.

\begin{definition}
  Let $G$ be a graph and for each $S \geq 0$ let $\varepsilon_{S, G}$ be the  minimal variation of probability measures on $G$ at scale $S$, i.e. the solution of Optimization Problem~\ref{op:measures}
  at scale $S$. 
  We say that $G$ has \df{property A} iff
  \[
  \lim_{S \to \infty} \varepsilon_{S, G} = 0.
  \]
\end{definition}

If $S \geq \diam G$, then $\varepsilon_{S, G} = 0$. 
Therefore property~A is trivial for finite graphs.
However, solving Optimization Problem~\ref{op:measures} for finite subgraphs of an infinite graph~$G$ allows us to determine whether or not $G$ has property~A.

\begin{definition}
A graph $G$ is \df{locally finite} if the degree of each vertex of $G$ is finite. 
A subgraph $H$ of a graph $G$ is \df{convex} if for each $i, j \in H$ that lie in a single connected component of $G$ there exists $\gamma$ that is a geodesic from $i$ to $j$ in $G$ such that $\gamma \subset H$.
\end{definition}

For a graph $G$ in the path-length metric, local finiteness is equivalent to the condition that balls are finite while the condition that $H \subset G$ is convex is equivalent to equality of the path-length metrics on $H$ and $G$.

\begin{theorem}\label{thm:property A limit of finite graphs}
  Let $G$ be a countably infinite locally finite graph endowed with the path-length metric.
  Let $G_1 \subset G_2 \subset G_3 \subset \cdots$ be an ascending sequence of 
  finite convex subgraphs of $G$ such that $G = \bigcup_{n \in \mathbb{N}} G_n$.
  Let $\varepsilon_{S, G_n}$ be the minimal variation of probability
  measures at scale $S$ for graph~$G_n$, i.e. the optimal solution of Optimization Problem~\ref{op:measures} for $G_n$ at scale $S$.
  The graph $G$ has property~A iff
  \[
  \lim_{S \to \infty} \limsup_{n \to \infty} \varepsilon_{S, G_n} = 0.
  \]
\end{theorem}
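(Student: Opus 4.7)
\medskip
\noindent\textbf{Proof plan.} The whole argument rests on two features of the setup: $G$ is locally finite (so each ball $B_G(i, S)$ is finite) and each $G_n$ is convex (so the path-length metrics on $G_n$ and $G$ agree and $B_{G_n}(i, S) = B_G(i, S) \cap V(G_n)$ for every $i \in V(G_n)$). I will prove the two implications separately.

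For the forward direction, suppose $G$ has property~A. Fix $\tau > 0$, choose $S_0$ with $\varepsilon_{S_0, G} < \tau$, and let $\{\xi_i\}_{i \in V(G)}$ witness this. Pick a nearest-point projection $\pi_n \colon V(G) \to V(G_n)$ (with $\pi_n(l) = l$ for $l \in V(G_n)$ and $d_G(l, \pi_n(l)) = d_G(l, V(G_n))$ otherwise), and set $\eta_i^{(n)} := (\pi_n)_\ast \xi_i$ for $i \in V(G_n)$. Pushforward preserves total mass and cannot increase $\ell^1$ distance, so each $\eta_i^{(n)}$ is a probability measure on $V(G_n)$ and $\|\eta_i^{(n)} - \eta_j^{(n)}\|_1 \leq \tau$ along every edge. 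For $l \in \supp \xi_i \subset B_G(i, S_0)$ the membership $i \in V(G_n)$ forces $d_G(l, \pi_n(l)) \leq d_G(l, i) \leq S_0$, hence $d_G(i, \pi_n(l)) \leq 2S_0$ by the triangle inequality, and convexity of $G_n$ then upgrades this to $d_{G_n}(i, \pi_n(l)) \leq 2S_0$. Thus $\supp \eta_i^{(n)} \subset B_{G_n}(i, 2S_0)$ and $\varepsilon_{2S_0, G_n} \leq \tau$ uniformly in $n$. Monotonicity of $\varepsilon_{S, G_n}$ in $S$ combined with $\varepsilon_{S_0, G} \to 0$ as $S_0 \to \infty$ yields the double limit.

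For the reverse direction, fix $\tau > 0$ and pick $S_0$ with $\limsup_n \varepsilon_{S_0, G_n} < \tau$. For every sufficiently large $n$ there exist witnesses $\{\xi_i^{(n)}\}_{i \in V(G_n)}$, each $\xi_i^{(n)}$ supported in the finite set $B_G(i, S_0) \cap V(G_n) \subset B_G(i, S_0)$. Enumerate $V(G) = \{v_1, v_2, \ldots\}$; on each compact cube $[0,1]^{B_G(v_k, S_0)}$ the sequence $(\xi_{v_k}^{(n)})_n$ admits a convergent subsequence, and a standard diagonalization produces a single subsequence $(n_m)$ along which $\xi_{v_k}^{(n_m)}$ converges pointwise to some $\xi_{v_k}$ for every $k$. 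Since for $n$ large enough $B_G(v_k, S_0) \subset V(G_n)$, the totals of $\xi_{v_k}^{(n_m)}$ are exactly $1$ in the limit, so each $\xi_{v_k}$ is a probability measure supported in $B_G(v_k, S_0)$. Convexity implies that an edge $v_k v_l \in E(G)$ is also an edge of $G_n$ once both endpoints lie in $V(G_n)$, so passing to the limit in the edge constraint gives $\|\xi_{v_k} - \xi_{v_l}\|_1 \leq \tau$. Hence $\varepsilon_{S_0, G} \leq \tau$ and property~A follows.

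The main obstacle is the forward direction: the naive construction of restricting $\xi_i$ to $V(G_n)$ and renormalizing gives variation bounds that blow up near the boundary, since boundary measures can lose a large fraction of their mass. The nearest-point projection circumvents this by transporting stray mass onto nearby boundary vertices at the cost of doubling the support radius, a doubling absorbed harmlessly by the outer $\lim_{S \to \infty}$. The reverse direction is softer: it boils down to a finite-dimensional compactness argument, with local finiteness and convexity of the $G_n$ being exactly the hypotheses needed to carry out the diagonal extraction.
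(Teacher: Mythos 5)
Your proposal is correct, and the backward direction (diagonal extraction of a pointwise limit of the optimal measures $\xi^{(n)}_i$, using local finiteness to keep supports in a fixed finite ball and convexity to identify balls and edges of $G_n$ with those of $G$) is essentially identical to the paper's argument. The forward direction, however, takes a genuinely different route. The paper isolates the key inequality $\varepsilon_{2S, G_n} \leq \varepsilon_{S, G}$ as Lemma~\ref{lem:subgraph} and proves it on the \emph{dual} side: it observes that the weak duality inequality of Theorem~\ref{thm:duality} survives for infinite graphs, extends an optimal pseudo-flow solution on $G_n$ (at the truncated scale $\{B_G(v,S)\cap V_n\}$) by zero to an admissible dual solution on $G$, and then compares scales through the projection to weighted isoperimetric inequalities (Linear Problem~\ref{lp:isoperimetric}), so that the argument leans on the duality and lift-and-project machinery developed earlier. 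You instead prove the same inequality directly on the \emph{primal} side: push the measures $\xi_i$ forward under a nearest-point retraction onto $V(G_n)$, note that pushforward preserves mass and is $1$-Lipschitz for the $\ell^1$ norm, and use convexity plus the triangle inequality to confine the transported supports to $B_{G_n}(i,2S_0)$. Your argument is more elementary and self-contained (it needs neither strong nor weak duality, nor the extension of the dual problem to infinite graphs or to arbitrary set families), at the cost of departing from the paper's LP-theoretic theme; both approaches incur the same factor-of-two loss in scale, which the outer limit absorbs. The only cosmetic point is that $\pi_n$ is not defined on vertices at infinite distance from $V(G_n)$, but you only ever apply it on $\bigcup_{i\in V(G_n)} B_G(i,S_0)$, where the nearest point exists because each such vertex is within $S_0$ of $V(G_n)$ and balls are finite.
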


Statements similar to Theorem~\ref{thm:property A limit of finite graphs} were proven by several authors (cf.~\cite{brodzki-niblo-wright2007,willett2009}) although the usual paradigm is to work with infinite graphs. Since it is not germane to the rest of our work, we postpone the proof to the last section.

Presently, we shift focus to finite graphs so we can recast Optimization Problem~\ref{op:measures} as an equivalent linear problem (Linear Problem~\ref{lp:measures}).
This allows us to use duality theory in linear programming to study property~A.

\begin{lp}{Minimal variation of probability measures at scale $S$}\label{lp:measures}
	\begin{mini*}{}{e}{}{}
		\addConstraint{\sum_{j \in V} x_{i, j}=1}{ \text{ for each } i \in V}
		\addConstraint{x_{i,j}=0}{ \text{ for each } i \in V, j \in V\setminus S_i}
		\addConstraint{x_{j,k} - x_{i,k}\leq e_{ij, k}}{ \text{ for each } ij \in E, k \in V}
		\addConstraint{x_{i,k} - x_{j,k}\leq e_{ij, k}}{ \text{ for each } ij \in E, k \in V}
		\addConstraint{\sum_{k \in V} e_{ij, k} \leq e}{ \text{ for each } ij \in E}
		\addConstraint{x_{i, j}, e_{ij, k}, e \geq 0}
	\end{mini*}
\end{lp}

The graph $G = (V, E)$ in Linear Problem~\ref{lp:measures} is assumed to be finite.

\begin{proposition}
  Optimization Problem~\ref{op:measures} and Linear Problem~\ref{lp:measures} are equivalent.
  We can pair admissible solutions of both problems
  by setting 
  \[
    \xi_i(j) = x_{i, j} \text{ and } \varepsilon = e.
  \]
\end{proposition}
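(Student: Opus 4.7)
The plan is to establish the equivalence by exhibiting explicit mutual translations between admissible solutions of the two problems and then checking that the objective values coincide. The pairing $\xi_i(j) = x_{i,j}$, $\varepsilon = e$ is already prescribed, so what remains is to supply the auxiliary variables $e_{ij,k}$ on the LP side and to verify that each constraint of one problem follows from the constraints of the other.

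First I would go from an admissible solution of Optimization Problem~\ref{op:measures} to one of Linear Problem~\ref{lp:measures}. Given probability measures $\{\xi_i\}$ with variation at most $\varepsilon$ and supports in $\{S_i\}$, I set $x_{i,j} = \xi_i(j)$, $e = \varepsilon$, and choose the slack variables as tightly as possible, namely $e_{ij,k} = |x_{i,k} - x_{j,k}|$. Nonnegativity of $x_{i,j}$ and $e_{ij,k}$ is immediate; $\sum_j x_{i,j} = \|\xi_i\|_1 = 1$; and the support condition $\supp \xi_i \subset S_i$ translates directly into $x_{i,j} = 0$ for $j \notin S_i$. The two inequalities $x_{j,k} - x_{i,k} \leq e_{ij,k}$ and $x_{i,k} - x_{j,k} \leq e_{ij,k}$ hold with equality by the choice of $e_{ij,k}$, and $\sum_k e_{ij,k} = \|\xi_i - \xi_j\|_1 \leq \varepsilon = e$, giving the last constraint.

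Conversely, starting from an admissible solution $(x_{i,j}, e_{ij,k}, e)$ of Linear Problem~\ref{lp:measures}, I set $\xi_i(j) = x_{i,j}$ and $\varepsilon = e$. Nonnegativity together with $\sum_j x_{i,j} = 1$ gives that $\xi_i$ is a probability measure, and $x_{i,j} = 0$ for $j \notin S_i$ gives $\supp \xi_i \subset S_i$. The key observation is that the two linear inequalities on $e_{ij,k}$ are precisely the standard linearization of the absolute value, so together they imply $|x_{i,k} - x_{j,k}| \leq e_{ij,k}$; summing over $k$ and using $\sum_k e_{ij,k} \leq e$ yields $\|\xi_i - \xi_j\|_1 \leq e = \varepsilon$ for every edge $ij \in E$.

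Since both translations preserve the objective value ($\varepsilon = e$), the two problems have the same optimal value and the pairing $\xi_i(j) = x_{i,j}$ sets up a correspondence on admissible (in particular, optimal) solutions. There is no real obstacle here; the only point worth flagging is that the LP uses twice as many linear constraints to encode each $\ell^1$-norm summand, and the cleanest way to present the argument is to note up front that the pair $\{x_{j,k} - x_{i,k} \leq e_{ij,k},\ x_{i,k} - x_{j,k} \leq e_{ij,k}\}$ is equivalent to $|x_{i,k} - x_{j,k}| \leq e_{ij,k}$, after which both directions reduce to routine bookkeeping.
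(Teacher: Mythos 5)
Your proposal is correct and matches the paper's proof in substance: both directions hinge on choosing $e_{ij,k} = |x_{i,k} - x_{j,k}|$ when going from measures to the LP, and on reading the pair of linear inequalities as the linearization of the absolute value when going back. The paper states this more tersely, but the argument is the same.
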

\begin{proof}
We have 
\[ 
  \| \xi_i - \xi_j \|_1 = 
  \sum_{k \in V} |\xi_i(k) - \xi_j(k)| =
  \sum_{k \in V} |x_{i, k} - x_{j, k}|
  \leq \sum_{k \in V} e_{ij, k}
  \leq e = \varepsilon.
\]
The other conditions are trivial.
Given an admissible solution of Optimization Problem~\ref{op:measures}, we set $e_{ij, k} = |\xi_i(k) - \xi_j(k)|$ to get a full solution of Linear Problem~\ref{lp:measures}.
\end{proof}

\section{The dual problem}

In this section we construct the dual problem to Linear Problem~\ref{lp:measures}. The dual problem is supported by the dual scale.

\begin{definition}
Let $\mathcal{S} = \{ S_i \}_{i \in V}$ be a scale on a graph $G = (V, E)$.
We let 
\[
\mathcal{\overbar S} = \{ \bar S_i \}_{i \in V} \text{ with } \overbar S_i = \{ j \in V \colon i \in S_j \}
\]
and call $\mathcal{\overbar S}$ the \df{dual scale} to $\mathcal{S}$.
If $\mathcal{S} = \mathcal{\overbar{S}}$ (i.e. $i \in S_j$ iff $j \in S_i$ for each $i, j \in V$), then we call $\mathcal{S}$ \df{symmetric}.
\end{definition}

Note that a scale constructed from balls of radius $S$ is symmetric. Moreover, $i \in \overline{S}_j$ if and only if $j \in S_i$ so the statements below can be rewritten without using the dual scale. We prefer to use it because it provides extra insight.

Linear Problem~\ref{lp:pseudo-flows} at scale $\mathcal{\overbar S}$ 
is dual to Linear Problem~\ref{lp:measures} at scale $\mathcal{S}$ (Theorem~\ref{thm:duality}).

\begin{lp}{Maximal net supply of pseudo-flows at dual scale $\mathcal{\overbar S}$}\label{lp:pseudo-flows}
	\begin{maxi*}[1]
		{}{\sum_{i \in V} \eta_i \hspace{8cm}}{}{\label{lp:dual}}
		\addConstraint{\sum_{ij \in E} \kappa_{ij}}{\leq 1}{}
		\addConstraint{\varphi_{k, ij}}{\leq \kappa_{ij}}{\text{ for each } ij \in E, k \in V}
		\addConstraint{-\varphi_{k, ij}}{\leq \kappa_{ij}}{\text{ for each } ij \in E, k \in V}
		\addConstraint{
			\sum_{mi \in E, m \in V} \varphi_{k, mi} -
			\sum_{im \in E, m \in V} \varphi_{k, im}
		}{ \geq \eta_i}{\text{ for each } k \in V, i \in \overbar S_k}
		\addConstraint{\eta_i, \varphi_{k, ij}}{\in \mathbb{R},}{\ \ \kappa_{ij} \geq 0}
	\end{maxi*}
\end{lp}

Linear Problem~\ref{lp:pseudo-flows} has a natural combinatorial interpretation in terms of pseudo-flows on a network. The network itself is the directed graph $G$ along with a function~$\kappa$ that assigns to each edge $ij\in E$ a non-negative real number, $\kappa_{ij}$, called its \df{capacity}. The total capacity of the network does not exceed $1$: 
\[
\sum_{ij \in E} \kappa_{ij} \leq 1. 
\]

For each vertex $k\in V$ there exists a \df{pseudo-flow} $\varphi_k$ on $E$ such that the magnitude of the flow across the edge $ij$ is bounded by the capacity of that edge: 
\[
|\varphi_{k,ij}|\le \kappa_{ij}.
\]

The quantity
\[
\sigma_{k, i} = 
\sum_{mi \in E, m \in V} \varphi_{k, mi} -
\sum_{im \in E, m \in V} \varphi_{k, im}
\]
is the \df{net supply} of the flow $\varphi_k$ at the vertex $i$.
If $im \in E$ and $\varphi_{k, im} = v$, then the flow $\varphi_k$
  adds the amount $v$ (which may be negative) to $\sigma_{k, m}$ and subtracts $v$ from $\sigma_{k, i}$.

The variable $\eta_i$ is the \df{demand} at node $i \in V$ (which may be negative). The \df{demand constraint} \[
  \sigma_{k, i} = \sum_{mi \in E, m \in V} \varphi_{k, mi} -
	\sum_{im \in E, m \in V} \varphi_{k, im} \geq \eta_i
\]
states that the supply of the flow $\varphi_k$ at node $i$ has to meet the demand. 
It is important to note that the flow $\varphi_k$ has to satisfy these demand constraints \textit{only} for $i \in \overline{S}_k$.

The goal is to maximize the total demand $\sum_{i \in V} \eta_i$ that can be satisfied simultaneously by the pseudo-flows $\varphi_k$ on the subsets $\overline{S}_k$ of the graph.

Let $S, T \subset V$.
We let $E[S, T] = (S \times T \cup T \times S) \cap E$ denote the set of all edges connecting $S$ with $T$ (without regard to orientation).

\begin{op}{Maximal net supply of pseudo-flows at dual scale $\mathcal{\overbar S}$}\label{op:pseudo-flows}
	Let $G = (V, E)$ be a graph and let $\mathcal{\overbar S} = \{ \overbar S_i \}_{i \in V}$ be a dual scale on $G$.
	Find the maximal sum $\sum_{i \in V} \eta_i$ of values of a (demand) functional $\eta \colon V \to \mathbb{R}$
	such that there exists a (capacity) functional $\kappa \colon E \to \mathbb{R}$  and	
	a family $\{ \varphi_k \colon E[\overbar S_k, V] \to \mathbb{R} \}_{k \in V}$ of functionals (pseudo-flows)
	such that 
	\begin{enumerate}
		\item the capacity is non-negative and the total capacity does not exceed $1$, i.e.
		\[ \sum_{ij \in E} \kappa_{ij} \leq 1 \text{ and } \kappa_{ij} \geq 0 \text{ for each } ij \in E, \] 
		\item each flow $\varphi_k$ is bounded by the capacity $\kappa$, i.e.		
		\[
		-\kappa_{ij} \leq \varphi_{k, ij} \leq \kappa_{ij} \text{ for each } ij \in E \text{ and } k \in V,
		\]
		\item the net supply of a flow $\varphi_k$ at node $i \in \overbar S_k$ satisfies the demand $\eta_i$, i.e.
		\[
		\sum_{mi \in E, m \in V} \varphi_{k, mi} - \sum_{im \in E, m \in V} \varphi_{k, im} \geq \eta_i \text{ for each } k \in V, i \in \overbar S_k.
		\]
	\end{enumerate}
\end{op}

\begin{theorem}\label{thm:duality}
	Linear Problem~\ref{lp:pseudo-flows} at scale $\mathcal{\overbar S}$ is dual to Linear Problem~\ref{lp:measures} at scale~$\mathcal{S}$.
	In particular, for each admissible solution of each problem, we have
	\[
	\sum_{i \in V} \eta_i \leq e
	\]
	and the optimal solutions are equal.
\end{theorem}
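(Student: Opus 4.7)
The plan is to derive Linear Problem~\ref{lp:pseudo-flows} as the linear-programming dual of Linear Problem~\ref{lp:measures} via the standard primal/dual recipe: every primal constraint contributes a dual variable (free if the constraint is an equality, sign-constrained otherwise), and every primal variable contributes a dual constraint. Once the two problems are exhibited as a matched primal/dual pair, the inequality $\sum_{i \in V} \eta_i \leq e$ is immediate from weak duality, and equality of the optimal values follows from strong duality. Strong duality applies because both problems are feasible---take $x_{i,j} = 1/|S_i|$ for $j \in S_i$ with $e = 2$ in the primal, and set all dual variables to $0$ in the dual---and the primal objective is bounded below by $0$.

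Concretely, I assign dual variables as follows. The equality $\sum_{j} x_{i,j} = 1$ yields a free variable $\eta_i$; the paired inequalities $x_{j,k} - x_{i,k} \leq e_{ij,k}$ and $x_{i,k} - x_{j,k} \leq e_{ij,k}$ yield nonnegative duals $\alpha_{ij,k}, \beta_{ij,k}$, which I combine into the free variable $\varphi_{k,ij} := \alpha_{ij,k} - \beta_{ij,k}$; and the capacity-style inequality $\sum_{k} e_{ij,k} \leq e$ yields $\kappa_{ij} \geq 0$. Reading off the dual constraint coming from each primal variable: the variable $e$ produces $\sum_{ij \in E} \kappa_{ij} \leq 1$; the variable $e_{ij,k}$ produces $\alpha_{ij,k} + \beta_{ij,k} \leq \kappa_{ij}$, which after the change of variables is equivalent to $|\varphi_{k,ij}| \leq \kappa_{ij}$ (one direction is immediate, the other uses $\alpha = \max(\varphi,0)$, $\beta = \max(-\varphi,0)$); and the variable $x_{i,k}$, which is present only for $k \in S_i$, equivalently $i \in \overbar S_k$, produces the demand constraint after tallying the $\pm 1$ coefficients with which $x_{i,k}$ appears in the four families of edge inequalities at $i$. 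The dual objective $\sum_i \eta_i \cdot 1$ then matches the objective of Linear Problem~\ref{lp:pseudo-flows} exactly.

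The main bookkeeping step will be the dual constraint coming from $x_{i,k}$: one has to track precisely in which of the edge inequalities the variable $x_{i,k}$ appears, and with what sign, distinguishing outgoing edges $ij \in E$ from incoming edges $mi \in E$ at $i$, and then recognize the resulting linear combination as $\sum_{mi \in E} \varphi_{k,mi} - \sum_{im \in E} \varphi_{k,im} \geq \eta_i$, i.e.\ ``inflow minus outflow at $i$'' in the language of Optimization Problem~\ref{op:pseudo-flows}. A feature worth emphasizing in the write-up is that the restriction $i \in \overbar S_k$ on the demand constraint is not imposed by hand but is the automatic dual shadow of the primal support condition $x_{i,k} = 0$ for $k \notin S_i$; this is precisely why the dual scale naturally governs the dual problem.
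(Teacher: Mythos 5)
Your proposal is correct and follows essentially the same route as the paper: the paper also derives Linear Problem~\ref{lp:pseudo-flows} by the standard Lagrangian dualization of Linear Problem~\ref{lp:measures}, assigning $\eta_i$ to the normalization equalities, a nonnegative pair $\varphi^{+}_{k,ij},\varphi^{-}_{k,ij}$ to the two edge inequalities (recombined as $\varphi_{k,ij}=\varphi^{+}_{k,ij}-\varphi^{-}_{k,ij}$ with $\varphi^{+}_{k,ij}+\varphi^{-}_{k,ij}\leq\kappa_{ij}$ becoming $|\varphi_{k,ij}|\leq\kappa_{ij}$), and $\kappa_{ij}$ to the constraint $\sum_k e_{ij,k}\leq e$, with the restriction $i\in\overbar S_k$ of the demand constraints arising from the support condition exactly as you describe. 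The only cosmetic differences are that the paper keeps the constraints $x_{i,j}=0$ with explicit dual multipliers $\tau_{i,j}$ (observing the resulting constraints are vacuous) rather than eliminating those variables, and it supplements the dualization with a direct summation-by-parts verification of the weak-duality inequality $\sum_{i\in V}\eta_i\leq e$.
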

\begin{proof}
We start with a direct argument (which does not use duality theory) to show the weak duality part of the theorem, i.e. to show the inequality $\sum_{i \in V} \eta_i \leq e$.
The inequalities in the primal Linear Problem~\ref{lp:measures} imply that
\[
e \geq \sum_{k \in V} | x_{j, k} - x_{i, k} |.
\]
Since $\kappa_{ij} \geq |\varphi_{k, ij}|$, for each edge $ij$, we have
\[
\kappa_{ij} \cdot e \geq \sum_{k \in V} \varphi_{k, ij} (x_{j, k} - x_{i, k}).
\]
The condition $\sum_{ij\in E}\kappa_{ij}\le 1$ implies that,
\[
e\ge \sum_{ij \in E} \sum_{k \in V} \varphi_{k, ij} (x_{j, k} - x_{i, k}) =
\sum_{i \in V} \sum_{k \in V} x_{i, k} \left( \sum_{mi \in E} \varphi_{k, mi} - \sum_{im \in E} \varphi_{k, im} \right) 
\geq
\]
\[
\geq \sum_{i \in V} \sum_{k \in V} x_{i,k} \eta_i
= \sum_{i \in V} \eta_i \sum_{k \in V} x_{i,k} = \sum_{i \in V} \eta_i. 
\]
This proves the weak duality part of the theorem.
Note that because $x_{i, k} = 0$ for $i \not \in \overline{S}_k$,
the condition $\sigma_{k, i} \geq \eta_i$ only has to be enforced on nodes $i \in \overline{S}_k$.
Thus a flow $\varphi_k$ can freely draw from nodes that are outside of $\overbar S_k$ and the demand constraints are only defined for those $i \in \overline{S}_k$.

To show strong duality we will construct a dual problem to Linear Problem~\ref{lp:measures} using the method described in~\cite{lahaie2015} (with some shortcuts).
Here $\delta$ denotes the Kronecker delta.
\begin{gather*}
e \geq e 
- \sum_{i \in V} \eta_i (-1 + \sum_{j \in V} x_{i,j}) 
+ \sum_{i \in V} \sum_{j \in V \setminus S_i} \tau_{i,j} x_{i, j}
- \sum_{ij \in E} \sum_{k \in V} \varphi^+_{k, ij} (e_{ij, k} - x_{j,k} + x_{i,k})
\\
\quad - \sum_{ij \in E} \sum_{k \in V} \varphi^-_{k, ij}(e_{ij,k} - x_{i,k} + x_{j,k})
-\sum_{ij \in E} \kappa_{ij} (e - \sum_{k \in V} e_{ij,k}) =
\\
= \sum_{i \in V} \eta_i + e(1 - \sum_{ij \in E} \kappa_{ij}) + 
  \sum_{ij \in E, k \in V} e_{ij, k}(-\varphi^+_{k, ij} - \varphi^-_{k, ij} + \kappa_{ij})
\\
  + \sum_{i, j \in V} x_{i,j}\left(-\sum_{k \in V, ik \in E} \varphi^+_{j, ik} +
    \sum_{k \in V, ki \in E} \varphi^+_{j, ki} 
    - \sum_{k \in V, ki \in E} \varphi^-_{j, ki} \right. \\
    \left. + \sum_{k \in V, ik \in E} \varphi^-_{j, ik}
    + \delta_{j \in V \setminus S_i}\tau_{i,j}  - \eta_i\right) \geq \sum_{i \in V} \eta_i.
\end{gather*}
This gives the dual problem.
\begin{maxi*}[1]
{}{\sum_{i \in V} \eta_i \hspace{8cm}}{}{}
\addConstraint{\sum_{ij \in E} \kappa_{ij}}{\leq 1}{{} }
\addConstraint{\varphi^+_{k, ij} + \varphi^-_{k, ij}}{\leq \kappa_{ij}}{\text{ for each } ij \in E, k \in V}
\addConstraint{
  \sum_{k \in V, ik \in E} (\varphi^-_{j, ik} - \varphi^+_{j, ik}) + 
  \sum_{k \in V, ki \in E} (\varphi^+_{j, ki} - \varphi^-_{j, ki}) + 
  \delta_{j \in V \setminus S_i}\tau_{i,j}}{ \geq \eta_i
}{\text{ for each } i,j \in V}
\addConstraint{\eta_i, \tau_{ij} \in \mathbb{R},\quad \varphi^+_{k, ij}, \varphi^-_{k, ij}, \kappa_{ij}}{ \geq 0}{{} }
\end{maxi*}
Observe that given an admissible solution we can always replace each pair $\varphi^+_{k, ij}, \varphi^-_{k, ij}$ in such a way that either $\varphi^+_{k, ij} = 0$ or $\varphi^-_{k, ij} = 0$.
Then we can substitute $\varphi_{k, ij} = \varphi^+_{k, ij} - \varphi^-_{k, ij}$ and we have $\varphi^+_{k, ij} + \varphi^-_{k, ij} = |\varphi_{k, ij}|$.
Also, the constraints where $\delta_{j \in V\setminus S_i} = 1$ are trivially satisfiable by setting the value of $\tau_{i,j}$ high enough. After these substitutions we obtain Linear Problem~\ref{lp:pseudo-flows}.
\end{proof}

\subsection{Dual problem interpretation}\label{subsec:dual}

We will now delve into an example to give a more detailed explanation of the dual problem.
Consider the following connected chordal graph on $10$ vertices.
\begin{center}
  \includegraphics[width=0.5\textwidth]{figures/dual/graph.tikz}
\end{center}
This graph was chosen from a set of $109539$ connected chordal graphs on $10$ vertices (all graphs of this type up to isomorphism).
Our interest in chordal graphs will become apparent in Section \ref{sec:girth}.
Among all such graphs, this one has maximal $\varepsilon$ at scale $S=1$; i.e. the minimal variation of probability measures at scale $S = 1$ is maximal for this graph.
This value of $\varepsilon$ is $\frac{16}{17}$, which we will
  demonstrate by exhibiting solutions of the primal and dual linear problems with $\varepsilon = \sum_{i \in V} \eta_i = \frac{16}{17}$.

The following table shows a solution of Linear Problem~\ref{lp:measures}.
The entry in the $i$th column and $j$th row of the table is the value of $x_{i, j}$.
Empty cells denote arguments outside of the scale, i.e. such that $j \not\in S_i$ (implying that $x_{i, j} = 0$).
Values of $e_{ij, k}$ and $e$ are implicit. We can set $e_{ij, k} = |x_{j, k} - x_{i, k}|$ and 
$e = \max_{ij \in E} \left( \sum_{k \in V} e_{ij, k} \right)$ to get an admissible solution of Linear Problem~\ref{lp:measures} with minimal value $e$ for specified values of~$x$.

\renewcommand{\arraystretch}{1.2} 
\begin{longtable}[H]{c|c|c|c|c|c|c|c|c|c|c}
&$\xi_{0}$&$\xi_{1}$&$\xi_{2}$&$\xi_{3}$&$\xi_{4}$&$\xi_{5}$&$\xi_{6}$&$\xi_{7}$&$\xi_{8}$&$\xi_{9}$\\ \midrule
0 & $\frac{9}{17}$ &       &       &       &       & $1$ &       & $\frac{3}{17}$ & $\frac{2}{17}$ & $\frac{1}{17}$\\
1 &       & $\frac{1}{17}$ &       &       &       &       & $\frac{9}{17}$ &       & $\frac{1}{17}$ & $\frac{1}{17}$\\
2 &       &       & $\frac{14}{17}$ &       &       &       &       & $\frac{6}{17}$ &       &      \\
3 &       &       &       & $\frac{6}{17}$ &       &       &       &       & $\frac{6}{17}$ &      \\
4 &       &       &       &       & $\frac{12}{17}$ &       &       &       &       & $\frac{4}{17}$\\
5 & $0$ &       &       &       &       & $0$ &       &       &       &      \\
6 &       & $\frac{8}{17}$ &       &       &       &       & $\frac{8}{17}$ &       &       &      \\
7 & $\frac{1}{17}$ &       & $\frac{3}{17}$ &       &       &       &       & $\frac{3}{17}$ &       & $\frac{3}{17}$\\
8 & $\frac{2}{17}$ & $\frac{3}{17}$ &       & $\frac{11}{17}$ &       &       &       &       & $\frac{3}{17}$ & $\frac{3}{17}$\\
9 & $\frac{5}{17}$ & $\frac{5}{17}$ &       &       & $\frac{5}{17}$ &       &       & $\frac{5}{17}$ & $\frac{5}{17}$ & $\frac{5}{17}$\\
\bottomrule\end{longtable}

As before we let $\xi_i(j) = x_{i, j}$, 
so that each column describes a functional $\xi_i \colon S_i \to \mathbb{R}$ from Optimization Problem~\ref{op:measures}.
Clearly, $\xi_i \geq 0$, $\| \xi_i \|_1 = 1$ (condition (1)), and $\supp \xi_i \subset B(i, 1)$ (condition (3)). To check condition (2) let
\[
e_{ij} = \sum_{k \in V} e_{ij, k} = \| \xi_i - \xi_j \|_1
\]
be an auxiliary variable that denotes the variation on the edge $ij$.
For the solution given in the table above the values of $e_{ij}$ are shown on the following graph.
\begin{center}
  \includegraphics[width=0.65\textwidth]{figures/dual/variation.tikz}
\end{center}
We see that $e = \frac{16}{17}$ is realized on all edges except the edge between nodes $8$ and $9$. The solution is admissible in the sense that it satisfies all constraints. 
It was found using the simplex method and it is one out of many basic optimal solutions of the problem (we did not show optimality yet).

To show that this solution is optimal we will provide a solution of the dual Linear Problem~\ref{lp:pseudo-flows} with the same value of the objective function.
The values of the variables $\kappa_{ij}$ and $\eta_i$ of the solution are shown on the following graph. The values of $\kappa$ are shown as edge labels and the values of $\eta$ are shown as node labels.

\begin{center}
  \includegraphics[width=0.70	 \textwidth]{figures/dual/capacity.tikz}
\end{center}

We interpret the edge labels $\kappa_{ij}$ as a capacity function $\kappa \colon E \to [0, \infty)$ and the node labels $\eta_i$ as a demand function $\eta \colon V \to \mathbb{R}$.
We have $\sum_{ij \in E} \kappa_{ij} = 1$ (total capacity) and $\sum_{i \in V} \eta_i = \frac{16}{17}$ (total demand).
To complete the solution of Linear Problem~\ref{lp:pseudo-flows} we only need to specify the values of $\varphi_{k, ij}$. The main point is that once we set $\kappa$ and $\eta$ this task can be done separately for each~$k$.
For fixed $\kappa$, $\eta$ and $k$ we describe the process as a pseudo-flow optimization problem, as follows.

Fix $k = 0$. The following graph shows the values of $\varphi_{k, ij}$ for each $ij \in E[\overbar S_k, V]$. So far we have ignored the orientation of the graph; now we fix it to show the direction of the flow (where a negative value means flow in the direction opposite to the edge's orientation) and the flow amount is shown on each edge. 

\begin{center}
	\includegraphics[width=0.75\textwidth]{figures/dual/flow_0.tikz}
\end{center}

The fixed node $k = 0$ is marked with a double boundary.
The values of $\varphi_{k, ij}$ (for $k=0$) are indicated on the edges.
One can see that the flow $\varphi_{k, ij}$ does not exceed the capacity $\kappa_{ij}$ by directly comparing the values in the graphs.
We interpret the values of $\varphi_{k, ij}$ as a function $\varphi_k \colon E[\overbar S_k, V] \to \mathbb{R}$---a pseudo-flow---with supply
\[
\sigma_{k, i} = \sum_{mi \in E, m \in V} \varphi_{k, mi} -
\sum_{im \in E, m \in V} \varphi_{k, im}
\]
at node $i \in \overbar S_k$.
Condition (3) of Linear Problem~\ref{lp:pseudo-flows} states that 
$ \sigma_{k, i} \geq \eta_i $, i.e.
that the supply $\sigma_{k, i}$ at node $i \in \overbar S_k$ of the flow $\varphi_k$ meets the demand $\eta_i$. To verify this, we mark the values of $\sigma_{k, i}$  as node labels in the graph and compare the values on both graphs directly. We see that
  the demand is satisfied by the supply at each node.

For $k = 6$ the pseudo-flow is the following.
\begin{center}
	\includegraphics[width=0.75\textwidth]{figures/dual/flow_6.tikz}
\end{center}
Again, the flow does not exceed the capacity $\kappa$ and the demand $\eta$ is satisfied by the flow.
Pseudo-flows for the other vertices of the graph can be constructed as well.
For brevity, we skip the details.

The following table shows the supply of each flow at each node.
Note that we construct separate flows for each column, but the 
  demand that can be satisfied is determined in each row, which makes interactions between
  $\varphi$ and $\eta$ subtle.
  
\begin{longtable}[H]{c|c|c|c|c|c|c|c|c|c|c|c}
i&$\sigma_{0, i}$&$\sigma_{1, i}$&$\sigma_{2, i}$&$\sigma_{3, i}$&$\sigma_{4, i}$&$\sigma_{5, i}$&$\sigma_{6, i}$&$\sigma_{7, i}$&$\sigma_{8, i}$&$\sigma_{9, i}$& $\min_k \sigma_{k, i}$\\ \midrule
$0$ & $\frac{2}{17}$ &       &       &       &       & $\frac{2}{17}$ &       & $\frac{2}{17}$ & $\frac{2}{17}$ & $\frac{2}{17}$& $\eta_0 = $ $\frac{2}{17}$\\
$1$ &       & $\frac{1}{17}$ &       &       &       &       & $\frac{1}{17}$ &       & $\frac{1}{17}$ & $\frac{1}{17}$& $\eta_1 = $ $\frac{1}{17}$\\
$2$ &       &       & $\frac{2}{17}$ &       &       &       &       & $\frac{2}{17}$ &       &      & $\eta_2 = $ $\frac{2}{17}$\\
$3$ &       &       &       & $\frac{2}{17}$ &       &       &       &       & $\frac{2}{17}$ &      & $\eta_3 = $ $\frac{2}{17}$\\
$4$ &       &       &       &       & $\frac{1}{17}$ &       &       &       &       & $\frac{1}{17}$& $\eta_4 = $ $\frac{1}{17}$\\
$5$ & $\frac{2}{17}$ &       &       &       &       & $\frac{2}{17}$ &       &       &       &      & $\eta_5 = $ $\frac{2}{17}$\\
$6$ &       & $\frac{2}{17}$ &       &       &       &       & $\frac{2}{17}$ &       &       &      & $\eta_6 = $ $\frac{2}{17}$\\
$7$ & $\frac{1}{17}$ &       & $\frac{1}{17}$ &       &       &       &       & $\frac{1}{17}$ &       & $\frac{1}{17}$& $\eta_7 = $ $\frac{1}{17}$\\
$8$ & $\frac{2}{17}$ & $\frac{2}{17}$ &       & $\frac{2}{17}$ &       &       &       &       & $\frac{2}{17}$ & $\frac{2}{17}$& $\eta_8 = $ $\frac{2}{17}$\\
$9$ & $\frac{1}{17}$ & $\frac{1}{17}$ &       &       & $\frac{1}{17}$ &       &       & $\frac{1}{17}$ & $\frac{1}{17}$ & $\frac{1}{17}$& $\eta_9 = $ $\frac{1}{17}$\\
\bottomrule & & & & & & & & & & & $ \sum = $ $\frac{16}{17}$ \\
\end{longtable}

\label{supply table}

We specified a solution of Linear Problem~\ref{lp:pseudo-flows} such that the value of objective function for this flow is $\sum_{i \in V} \eta_i = \frac{16}{17}$. By Theorem~\ref{thm:duality} this solution is optimal.

\subsection{The square graph \texorpdfstring{$\mathbb{Z}_2 \times \mathbb{Z}_2$}{}}

We will show what each pseudo-flow $\varphi_k$ is transferring on a minimal non-trivial example. Consider the following graph.
\begin{center}
  \includegraphics[width=0.24\textwidth]{figures/Cr_1/graph.tikz}
\end{center}
It is not difficult to find a good candidate for the optimal solution to Optimization Problem~\ref{op:measures} at scale $S=1$, by setting $\xi_i(j)=\frac13$ when $j\in S_i = B(i, 1)$ and $0$ otherwise. This yields the value $\varepsilon = \frac23$.

For the dual problem, we set each capacity $\kappa_{ij} = \frac 14$ and construct flows that satisfy the demand $\eta_i = \frac 16$ at each node.
\begin{center}
  \includegraphics[width=0.24\textwidth]{figures/Cr_1/uniform_flow_0.tikz}
  \includegraphics[width=0.24\textwidth]{figures/Cr_1/uniform_flow_1.tikz}
  \includegraphics[width=0.24\textwidth]{figures/Cr_1/uniform_flow_3.tikz}
  \includegraphics[width=0.24\textwidth]{figures/Cr_1/uniform_flow_2.tikz}
\end{center}
The diagram above indicates the values of the pseudo-flows $\varphi_0, \varphi_1, \varphi_2, \varphi_3$, from left to right, with the node $i$ of the flow $\varphi_i$ indicated by a double boundary. The values on the edges are the pseudo-flow values. The labels on each vertex indicate the supplies of the flow; we note that these are marked only in nodes from the dual scale, since the demand constraints are not defined outside the dual scale.

We will show how the dual solution proves the optimality of the primal solution.
The following inequality describes the variation of the probability measures on edge~$01$.
\[
\| \xi_0 - \xi_1 \|_1 = 
|x_{1, 0}- x_{0,0}|+
|x_{1, 1} - x_{0,1}|+
|x_{1, 2}- {\color{gray!50}x_{0,2}}|+
|{\color{gray!50}x_{1,3}} - x_{0,3}| \leq \varepsilon
\]
Those values that are not in the support, and are therefore forced to be $0$ by the problem specification, are grayed out.
This implies that
\[
-\frac{1}{3} \cdot \left(x_{1,0} - x_{0,0}\right) + 
\frac{1}{3} \cdot \left(x_{1,1} - x_{0,1}\right) + 
1 \cdot (\left(x_{1,2} - {\color{gray!50}x_{0,2}}\right) +
(-1) \cdot \left({\color{gray!50}x_{1,3}} - x_{0,3}\right) \leq
\varepsilon.
\]
This inequality remains valid if we replace the coefficients with any coefficients from the interval $[-1, 1]$; the reason for our choice of coefficients will be apparent below: we use rescaled values of the first flow shown above.
We can write similar inequalities using all flows
to obtain the following system of inequalities; here we have multiplied each inequality by $\frac14$ so that the right-hand side sums to $\varepsilon$.

\begin{gather*}
-\frac{1}{12} \left(x_{1,0} - x_{0,0}\right) + 
\frac{1}{12} \left(x_{1,1} - x_{0,1}\right) + 
\frac 14 \left(x_{1,2} - {\color{gray!50}x_{0,2}}\right) +
(-\frac 14) \left({\color{gray!50}x_{1,3}} - x_{0,3}\right) \leq
\frac 14 \varepsilon
\\
-\frac{1}{12} \left(x_{3, 0} - x_{0, 0}\right) + 
(-\frac{1}{4}) \left({\color{gray!50} x_{3, 1}} - x_{0, 1}\right) +
\frac{1}{4} \left(x_{3, 2} - {\color{gray!50}x_{0, 2}}\right) + 
\frac{1}{12} \left(x_{3, 3} - x_{0, 3}\right) \leq 
\frac{1}{4} \varepsilon
\\
-\frac{1}{4} \left({\color{gray!50}x_{2, 0}} - x_{1, 0}\right) +
(-\frac{1}{12}) \left(x_{2, 1} - x_{1, 1}\right) + 
\frac{1}{12} \left(x_{2,2} - x_{1, 2}\right) + 
\frac{1}{4} \left(x_{2, 3} - {\color{gray!50}x_{1, 3}}\right) \leq 
\frac{1}{4} \varepsilon
\\
\frac{1}{4} \left(x_{3, 0} - {\color{gray!50}x_{2, 0}}\right) + 
(-\frac{1}{4}) \left({\color{gray!50}x_{3, 1}} - x_{2, 1}\right) + 
(-\frac{1}{12}) \left(x_{3, 2} - x_{2, 2}\right) + 
\frac{1}{12} \left(x_{3, 3} - x_{2, 3}\right) \leq 
\frac{1}{4} \varepsilon
\end{gather*}

The coefficients of this system of linear equations in matrix notation are
{
\renewcommand{\arraystretch}{1.2}
\[
\left[
\begin{array}{rrrr|r}
-\frac 1{12} & \frac 1{12} & \frac 14 & -\frac 14 & \frac 14 \\
-\frac 1{12} & -\frac 14 & \frac 14 & \frac 1{12} & \frac 14 \\
-\frac 14 & -\frac 1{12} & \frac 1{12} & \frac 14 & \frac 14 \\
\frac 14 & -\frac 14 & -\frac 1{12} & \frac 1{12} & \frac 14 
\end{array}\right].
\]
}

The coefficient matrix has $5$ columns and $4$ rows.
Each row corresponds to an edge of the graph: $01$, $03$, $12$, $23$ respectively.
For the row corresponding to edge $ij \in E$, the coefficient on the right-hand side is $\kappa_{ij}$. 
Each column on the left-hand side corresponds to one vertex of the graph: $0$, $1$, $2$, $3$ respectively. 
For the column corresponding to vertex $k \in V$, the coefficients are the values of the flow $\varphi_k$.

We add everything together and group by $x$'s.

\begin{gather*}
x_{0, 0} (-(-\frac{1}{12})-(-\frac{1}{12})) +
x_{0, 1} (-\frac{1}{12}-(-\frac{1}{4})) + 
{\color{gray!50}x_{0, 2} (-\frac{1}{4}-\frac{1}{4})} + 
x_{0, 3} (-(-\frac{1}{4})-\frac{1}{12}) +
\\
x_{1, 0} (-\frac{1}{12}-(-\frac{1}{4})) + 
x_{1, 1} (\frac{1}{12}-(-\frac{1}{12})) + 
x_{1, 2} (\frac{1}{4}-\frac{1}{12}) + 
{\color{gray!50}x_{1, 3} (-\frac{1}{4}-\frac{1}{4})}
\\
{\color{gray!50}x_{2, 0}(-\frac{1}{4}-\frac{1}{4})} +
x_{2,1} (-\frac{1}{12}-(-\frac{1}{4})) + 
x_{2,2} (\frac{1}{12}-(-\frac{1}{12})) + 
x_{2,3} (\frac{1}{4}-\frac{1}{12})
\\
x_{3,0} (-\frac{1}{12}+\frac{1}{4}) +
{\color{gray!50}x_{3, 1} (-\frac{1}{4}+-\frac{1}{4})} +
x_{3,2} (\frac{1}{4}+-\frac{1}{12}) + 
x_{3,3} (\frac{1}{12}+\frac{1}{12})
 \leq \varepsilon
\end{gather*}

After regrouping we get a new coefficient matrix:
{
\renewcommand{\arraystretch}{1.2}
\[
\begin{bmatrix}
-(-\frac{1}{12})-(-\frac{1}{12}) &
-\frac{1}{12}-(-\frac{1}{4}) &
 &
-(-\frac{1}{4})-\frac{1}{12}
\\
-\frac{1}{12}-(-\frac{1}{4}) & 
\frac{1}{12}-(-\frac{1}{12}) & 
\frac{1}{4}-\frac{1}{12}) & 

\\
 &
-\frac{1}{12}-(-\frac{1}{4}) &
\frac{1}{12}-(-\frac{1}{12}) &
\frac{1}{4}-\frac{1}{12}
\\
-\frac{1}{12}+\frac{1}{4} &
 &
\frac{1}{4}+-\frac{1}{12} & 
\frac{1}{12}+\frac{1}{12}
\end{bmatrix}.
\]
}

The coefficients in the first row are $\sigma_{0, 0}, \sigma_{1, 0}, \sigma_{3, 0}$: the supplies of the flows $\varphi_0, \varphi_1, \varphi_3$ at node~$0$. (The value $\sigma_{2, 0}$ is ignored since $2$ is not in the support of $\xi_0$ and $x_{0,2} = 0$.)
By the demand constraints we have $\sigma_{0, 0}, \sigma_{1, 0}, \sigma_{3, 0} \geq \eta_0 = \frac 16$ and by the probability measure constraint we have $x_{0,0} + x_{0,1} + x_{0,3} = 1$ (again, $x_{0,2} = 0$).
Therefore,
\[
  \sigma_{0,0} x_{0,0} + \sigma_{1, 0} x_{0,1} + \sigma_{2,0} x_{0,2} \geq \eta_0 (x_{0,0} + x_{0, 1} + x_{0,2}) = \frac 16 \cdot 1.
\]
A similar argument for subsequent rows gives a lower bound of $4 \cdot \frac 16$ on $\varepsilon$, which proves that the primal solution $\varepsilon = \frac 23$ was optimal.

The game is to maximize the minimum value $\eta_i$ in each row of the coefficient matrix in such a way that $\sum_{i \in V} \eta_i$ is maximal. A change in a single flow value $\varphi_{k, ij}$ by $v$ increases the value of $\sigma_{k, j}$ by $v$ (in the $j$th row, $k$th column) and decreases the value of $\sigma_{k, i}$ by $v$.
The flow $\varphi_k$ along the edge $ij$ transfers coefficient from $k$th column, $i$th row to $k$th column, $j$th row.
The values that are not in the scale are ignored, which allows us to obtain a positive net value.

The relationship between supplies and demands in the coefficient matrix is rather subtle: a flow $\varphi_k$ determines the supplies in the $k$th column, while a demand $\eta_i$ is computed from the $i$th row.
A natural simplification when constructing a flow $\varphi_k$ is to maximize its minimal supply (i.e. maximize the minimal coefficient in $k$th column), which splits the problem into several easier sub-problems.
Later, we will achieve this by replacing the possibly distinct demands $\eta_i$ by a single global demand~$\eta$.
The resulting problem is easier, but not equivalent. 
We will study this in detail in Section~\ref{sec:sparsest cut}.

\section{The dual problem without flows}

In this section we give a reformulation of the dual problem that does not use the flow variables~$\varphi_{k, ij}$.
We show that its dual (which is again a minimization problem) is a well-known reformulation of property A in the language of partitions of unity.

\subsection{The cube graph \texorpdfstring{$\mathbb{Z}_2 \times \mathbb{Z}_2 \times \mathbb{Z}_2$}{}} To motivate what follows, we solve the primal and dual problems for the three-dimensional cube graph $Q_3$ at scale $S=2$, see Figure~\ref{fig:cube}.

\begin{figure}
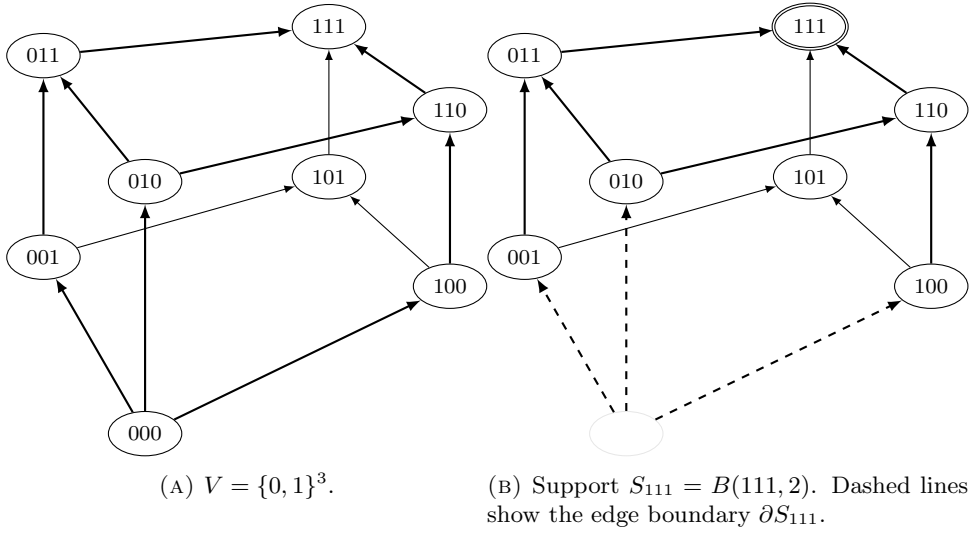

\centering
\begin{subfigure}[t]{0.5\textwidth}
\includegraphics[width=\textwidth]{figures/GS/graph.tikz}%
\caption{$V = \{ 0, 1 \}^3$.}
\end{subfigure}%
\begin{subfigure}[t]{0.5\textwidth}
\includegraphics[width=\textwidth]{figures/GS/support_111.tikz}%
\caption{Support $S_{111} = B(111, 2)$. Dashed lines show the edge boundary $\partial S_{111}$.}
\label{fig:cube support}
\end{subfigure}
\caption{The cube graph $Q_3$ at scale $S=2$.}
\label{fig:cube}
\end{figure}
For $S = 2$ the scale is $\mathcal{S}=\{ S_i = B(i, 2) \colon i\in V \}$.
It is not difficult achieve a value of $e=\frac27$ for the primal problem by setting $x_{i,j}=\frac17$ for each $i\in V$ when $j\in S_i$ and $0$ otherwise. 

To solve the dual problem, we set the capacity of each edge to be $\kappa_{ij} = \frac{1}{|E|} = \frac{1}{12}$.
Because of the symmetry of the graph, it is reasonable to maximize the minimal supply $\eta = \min_{i \in V} \eta_i$ over all nodes (cf. Theorem~\ref{thm:averaging}).
Our task is to find, for each $k$, a pseudo-flow on $E[S_k, V]$ that maximizes $\eta$.
The edge set $E[S_k, V]$ for $k = 111$ is shown on Figure~\ref{fig:cube support}.
The dashed lines are edges at the boundary $\partial S_k$.
Observe that the flow on the edges connecting vertices of $S_k$ with vertices of $S_k$ does not change the total supply $\sum_{i \in S_k} \sigma_{k, i}$ of the flow $\varphi_k$ on $S_k$. 
The total supply is equal to the amount of flow on $\partial S_k$, which is bounded by $|\partial S_k| \cdot \frac 1{|E|} = \frac 3{12}$.
Clearly, the minimal supply $\eta$ at a node is not greater than the total supply divided by the number of nodes in $S_k$:
\[
\eta \leq \frac {|\partial S_k|}{|S_k| \cdot |E|} = \frac 3{12 \cdot 7}.
\]
Here it is easy to find a flow that spreads the supply evenly across the nodes of $S_k$ and obtain $\eta_i = \frac 1{28}$ for each i.
For $k=111$, we flow the maximal value of $\frac1{12}$ along the edges from $000$ to each of $001, 010$, and $100$. Next, we flow $\frac1{42}$ from each of these three vertices to the pair of vertices above them (whose descriptions contain exactly two $1$s). Finally, we flow $\frac1{84}$ from each vertex containing exactly two $1$s to the vertex $111$. This yields a difference $\sigma$ of $\frac{3}{84}=\frac{1}{28}$ at each vertex. 

By symmetry, we can obtain the same value for each starting vertex; thus $\eta_i=\frac 1{28}$ and $\sum_{i\in V}\eta_i=8\cdot\frac1{28}=\frac27$.
By Theorem~\ref{thm:duality}, we have
\[\frac27\le\sum_{i\in V}\eta_i\le e\le\frac27.\]

\begin{figure}
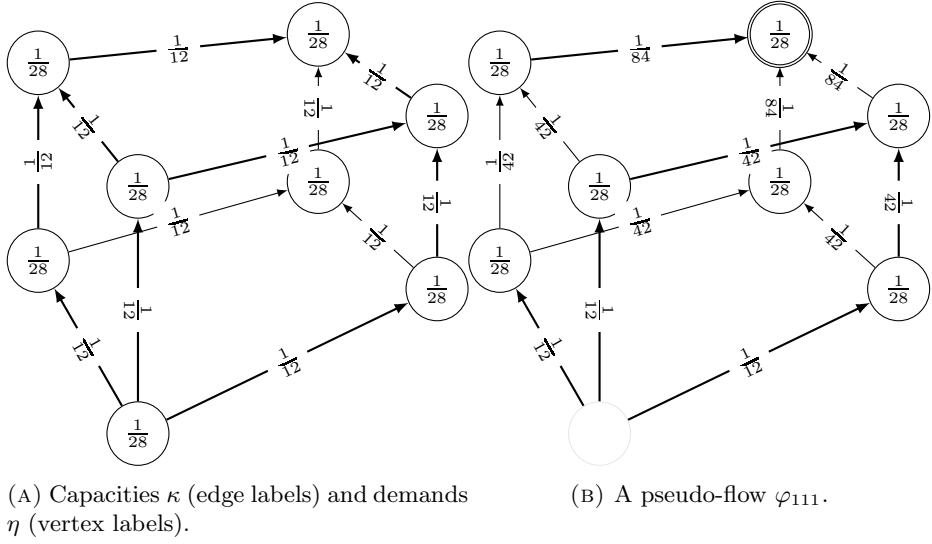

\centering
\begin{subfigure}[t]{0.48\textwidth}
\includegraphics[width=\textwidth]{figures/GS/reoptimized_capacity.tikz}%
\caption{Capacities $\kappa$ (edge labels) and demands $\eta$ (vertex labels).}
\end{subfigure}%
\begin{subfigure}[t]{0.48\textwidth}
\includegraphics[width=\textwidth]{figures/GS/uniform_flow_111.tikz}
\caption{A pseudo-flow $\varphi_{111}$.}
\end{subfigure}
\caption{A symmetric solution of Optimization Problem~\ref{op:pseudo-flows} for $Q_3$ at scale $S=2$.}
\end{figure}

Unsurprisingly, this manual solution is only one of the possible pseudo-flows that achieve this optimal value of $\sum\eta_i$. A basic solution found by the simplex method lacks the symmetry and is described in Figure~\ref{fig:cube alternate flow}, which is part of a solution with $\sum\eta_i=\frac27$; however, the manual solution yields insight that leads to a general statement, see Theorem~\ref{thm:cube}.

\begin{figure}[H]
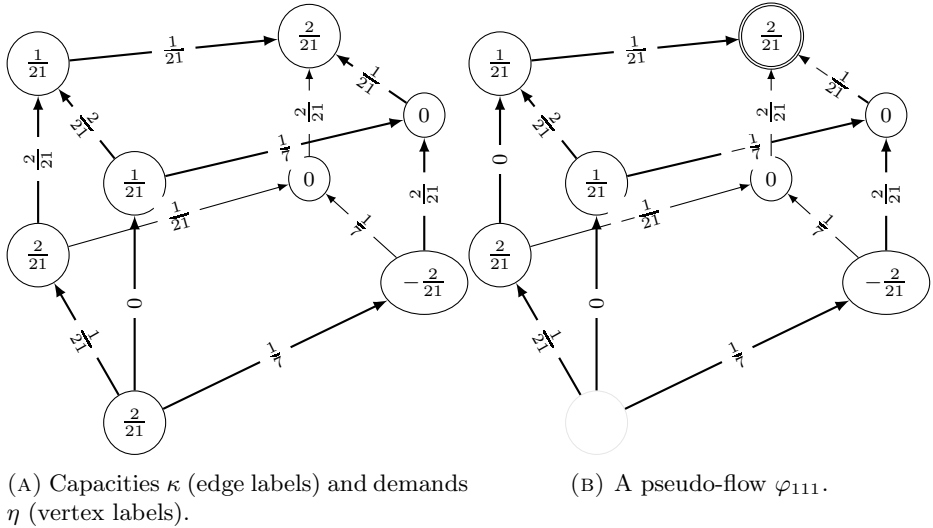

\centering
\begin{subfigure}[t]{0.48\textwidth}
\includegraphics[width=\textwidth]{figures/GS/capacity.tikz}%
\caption{Capacities $\kappa$ (edge labels) and demands $\eta$ (vertex labels).}
\end{subfigure}%
\begin{subfigure}[t]{0.48\textwidth}
\includegraphics[width=\textwidth]{figures/GS/flow_111.tikz}
\caption{A pseudo-flow $\varphi_{111}$.}
\end{subfigure}
\caption{A basic solution (one of many) of Optimization Problem~\ref{op:pseudo-flows} for $Q_3$ at scale $S=2$ found with the simplex method. Note negative value $\eta_{100} = - \frac{2}{21}$.}
\label{fig:cube alternate flow}
\end{figure}

\subsection{The \texorpdfstring{$3\times 3$}{3 by 3} grid} \label{ex:3x3} The optimal solutions to the primal and dual problems in the previous example could be found by hand by exploiting the cube's inherent symmetry. Next, we consider a $3\times 3$ grid.%, which has three different types of vertices, exemplified by $0$, $1$, and $4$ in Figure~\ref{fig:3x3 grid}. 

%\begin{figure}[H]
%\begin{center}
%	\input{figures/Hk/graph.tikz}
%\end{center}
%\caption{The $3\times 3$ grid graph does not enjoy the symmetry of the cube.}
%\label{fig:3x3 grid}
%\end{figure}

Fix $S=1$. Attempting to find a solution to the dual problem with equal edge capacities of $\kappa=\frac{1}{|E|}=\frac1{12}$ using the technique that we employed above results in a ``bottleneck'' along an edge incident to the center vertex as indicated below. 
%in Figure~\ref{fig:flow at 1}.  
%\begin{figure}[H]
    \begin{center}
    \begin{tikzpicture}[scale=.35,>=latex,line join=bevel]
\node (0) at (414.6bp,17.93bp) [draw,ellipse] {$\frac{1}{12}$};
  \node (1) at (463.02bp,173.6bp) [draw,ellipse,double] {$\frac{1}{12}$};
  \node (3) at (259.5bp,63.732bp) [draw,ellipse,opacity=0.1] {\phantom{$\frac{1}{12}$}};
  \node (2) at (501.07bp,332.73bp) [draw,ellipse] {$\frac{1}{12}$};
  \node (4) at (303.08bp,224.92bp) [draw,ellipse] {$\frac{1}{6}$};
  \node (5) at (347.77bp,387.51bp) [draw,ellipse,opacity=0.1] {\phantom{$\frac{1}{12}$}};
  \node (6) at (104.12bp,117.2bp) [draw,ellipse,opacity=0.1] {\phantom{$\frac{1}{12}$}};
  \node (7) at (144.06bp,275.87bp) [draw,ellipse,opacity=0.1] {\phantom{$\frac{1}{12}$}};
  \node (8) at (189.77bp,431.51bp) [draw,ellipse,opacity=0.1] {\phantom{$\frac{1}{12}$}};
  \draw [->] (0) -- node[fill=white, sloped] {$0$} (1);
  \draw [->] (0) -- node[fill=white, sloped] {$-\frac{1}{12}$} (3);
  \draw [->] (1) -- node[fill=white, sloped] {$0$} (2);
  \draw [->] (1) -- node[fill=white, sloped] {$-\frac{1}{12}$} (4);
  \draw [->] (2) -- node[fill=white, sloped] {$-\frac{1}{12}$} (5);
  \draw [->] (3) -- node[fill=white, sloped] {$\frac{1}{12}$} (4);
  \draw [->,opacity=0.1] (3) -- (6);
  \draw [->] (4) -- node[fill=white, sloped] {$-\frac{1}{12}$} (5);
  \draw [->] (4) -- node[fill=white, sloped] {$-\frac{1}{12}$} (7);
  \draw [->,opacity=0.1] (5) -- (8);
  \draw [->,opacity=0.1] (6) -- (7);
  \draw [->,opacity=0.1] (7) -- (8);
\end{tikzpicture}

    \end{center}
%    \caption{With equal demands $\kappa=\frac{1}{12}$, we cannot flow enough charge through the edge $14$ to achieve equal demands at each vertex; excess charge is left at $4$.}
%    \label{fig:flow at 1}
%\end{figure}

This solution results in $\sum\eta_i=\frac{41}{45}$. On the other hand, by allowing variable capacities, one can find the optimal value $\sum_i\eta_i=\frac{12}{13}$, see Figure~\ref{fig:3x3_optimal_dual}. 

%\input{figures/HkSg_SD_1/table.tex}

%Indeed, this value of $\frac{12}{13}$ is optimal and can be achieved by pseudo-flows as follows with capacities on edges and demand as indicated in Figure~\ref{fig:3x3_optimal_dual}.
\begin{figure}[H]
    \centering
    \begin{tikzpicture}[scale=.4,>=latex,line join=bevel,font=\footnotesize]
\node (0) at (295.64bp,17.547bp) [draw,ellipse] {$0$};
  \node (1) at (337.14bp,150.95bp) [draw,ellipse] {$\frac{1}{13}$};
  \node (3) at (162.72bp,56.798bp) [draw,ellipse] {$\frac{3}{13}$};
  \node (2) at (369.74bp,287.32bp) [draw,ellipse] {$\frac{2}{13}$};
  \node (4) at (200.07bp,194.93bp) [draw,ellipse] {$\frac{2}{13}$};
  \node (5) at (238.36bp,334.27bp) [draw,ellipse] {$\frac{1}{13}$};
  \node (6) at (29.563bp,102.62bp) [draw,ellipse] {$0$};
  \node (7) at (63.796bp,238.59bp) [draw,ellipse] {$\frac{2}{13}$};
  \node (8) at (102.96bp,371.98bp) [draw,ellipse] {$\frac{1}{13}$};
  \draw [] (0) ..controls (309.04bp,60.636bp) and (323.49bp,107.09bp)  .. node[fill=white, sloped] {$\frac{1}{13}$} (1);
  \draw [] (0) ..controls (253.01bp,30.136bp) and (228.23bp,37.452bp)  .. node[fill=white, sloped] {$\frac{1}{13}$} (3);
  \draw [] (1) ..controls (347.7bp,195.14bp) and (359.15bp,243.01bp)  .. node[fill=white, sloped] {$\frac{1}{13}$} (2);
  \draw [] (1) ..controls (278.22bp,169.86bp) and (259.09bp,175.99bp)  .. node[fill=white, sloped] {$\frac{1}{13}$} (4);
  \draw [] (2) ..controls (313.51bp,307.41bp) and (294.45bp,314.22bp)  .. node[fill=white, sloped] {$\frac{1}{13}$} (5);
  \draw [] (3) ..controls (174.76bp,101.33bp) and (188.03bp,150.39bp)  .. node[fill=white, sloped] {$\frac{1}{13}$} (4);
  \draw [] (3) ..controls (99.713bp,78.479bp) and (72.773bp,87.749bp)  .. node[fill=white, sloped] {$\frac{1}{13}$} (6);
  \draw [] (4) ..controls (212.42bp,239.85bp) and (226.02bp,289.34bp)  .. node[fill=white, sloped] {$\frac{1}{13}$} (5);
  \draw [] (4) ..controls (141.29bp,213.76bp) and (122.55bp,219.77bp)  .. node[fill=white, sloped] {$\frac{1}{13}$} (7);
  \draw [] (5) ..controls (178.61bp,350.91bp) and (162.83bp,355.3bp)  .. node[fill=white, sloped] {$\frac{1}{13}$} (8);
  \draw [] (6) ..controls (40.563bp,146.31bp) and (52.624bp,194.22bp)  .. node[fill=white, sloped] {$\frac{1}{13}$} (7);
  \draw [] (7) ..controls (76.553bp,282.04bp) and (90.14bp,328.31bp)  .. node[fill=white, sloped] {$\frac{2}{13}$} (8);
\end{tikzpicture}
    \caption{By allowing variable capacities, we are able to achieve $\sum_i\eta_i=e$.}
    \label{fig:3x3_optimal_dual}
\end{figure}
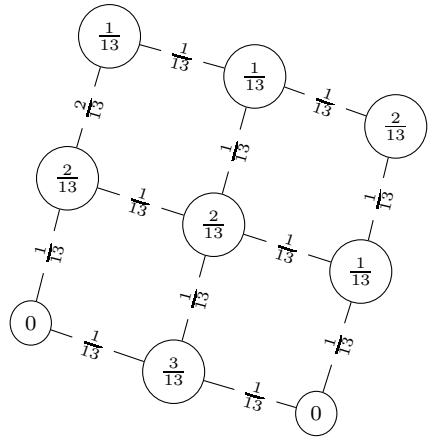

The conclusion is that in the absence of symmetry, one cannot expect optimal solutions with equal capacities.

\subsection{Weighted isoperimetric inequality}

Let $G = (V, E)$ be a graph.
For each $T \subset V$ we let
\[
  \partial T = E[T, V \setminus T]
\]
be the \df{edge boundary} of $T$.
As before, $E[T, V\setminus T] = (T \times (V\setminus T) \cup (V\setminus T)\times T) \cap E$.
The main insight in the solution of the dual problem for the cube graph $Q_3$ was the following inequality.
\[
  \sum_{i \in T} \sigma_{k, i} \leq \sum_{ij \in \partial T} \kappa_{ij}. 
\]
Clearly, if $ij \in E[T, T]$, then $\varphi_{k, ij}$ appears with a positive sign in $\sigma_{k, j}$ and with a negative sign in $\sigma_{k, i}$, hence the flow on the edges inside $T$ cancels out.
\[
  \sum_{i \in T} \sigma_{k, i} =
  \sum_{i \in T} \left( \sum_{mi \in E, m \in V} \varphi_{k, mi} - \sum_{im \in E, m \in V} \varphi_{k, im} \right)
  = \]
\[
  = \sum_{ij \in E[V \setminus T, T]} \varphi_{k, ij} - \sum_{ij \in E[T, V \setminus T]} \varphi_{k, ij}
  \leq \sum_{ij \in \partial T} \kappa_{ij}.
\]
In other words, the inequality states that the total supply in T is bounded by the total capacity at the boundary of $T$. 

Let $\mathcal{S} = \{ S_i \}_{i \in V}$ be a scale on $G$.
For each $k \in V$ and each $T \subset \overline{S}_k$ the flow $\varphi_k$
satisfies the inequality
\[
  \sum_{i \in T} \eta_i \leq \sum_{i \in T} \sigma_{k, i}.
\]
Hence for each flow $\varphi_k$, $k \in V$ and each $T \subset \overline{S}_k$ we have
\[
  \sum_{i \in T} \eta_i \leq \sum_{ij \in \partial T} \kappa_{ij}.
\]
This is a \df{weighted isoperimetric inequality}: the volume of $T$ with respect to $\eta$ is bounded by the volume of $\partial T$ with respect to $\kappa$. We have just proven the following theorem.

\begin{theorem}
  Each admissible solution of the dual problem satisfies a weighted isoperimetric inequality for all subsets of elements of the dual scale.

  More precisely, if ($\eta_i$, $\kappa_{ij}$, $\varphi_{k, ij}$) is an admissible solution of Linear Problem~\ref{lp:pseudo-flows}, then
  \[
    \sum_{i \in T} \eta_i \leq \sum_{ij \in \partial T} \kappa_{ij} 
  \]
  for each $T \subset \overline{S} \in \mathcal{\overline{S}}$.
\end{theorem}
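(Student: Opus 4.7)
The plan is essentially to formalize the calculation that appears in the discussion just before the theorem, splitting the argument into two independent inequalities: one that bounds $\sum_{i \in T} \eta_i$ by the total supply of some flow $\varphi_k$ on $T$, and a second one (an ``integration by parts'' or discrete divergence identity) that bounds the total supply on $T$ by the capacity of the boundary $\partial T$.

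First I would fix any $k \in V$ such that $T \subset \overline{S}_k$; such a $k$ exists by hypothesis since $T \subset \overline{S}$ for some $\overline{S} \in \overline{\mathcal{S}}$. The pseudo-flow $\varphi_k$ and the demand constraint of Linear Problem~\ref{lp:pseudo-flows} are defined precisely on the vertices of $\overline{S}_k$. Summing the demand constraint $\sigma_{k, i} \geq \eta_i$ over $i \in T$ gives
\[
  \sum_{i \in T} \eta_i \;\leq\; \sum_{i \in T} \sigma_{k, i}.
\]
This is the only place where the assumption $T \subset \overline{S}_k$ is needed, and the reason the theorem's hypothesis cannot be weakened to arbitrary $T \subset V$.

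Next I would establish the discrete divergence identity. Expand
\[
  \sum_{i \in T} \sigma_{k, i} = \sum_{i \in T} \Bigl( \sum_{mi \in E} \varphi_{k, mi} - \sum_{im \in E} \varphi_{k, im} \Bigr).
\]
For any edge $ij \in E$ with both endpoints in $T$, the term $\varphi_{k, ij}$ appears exactly twice in the sum, once with a positive sign (from $\sigma_{k, j}$) and once with a negative sign (from $\sigma_{k, i}$), and hence cancels. What survives is
\[
  \sum_{i \in T} \sigma_{k, i} = \sum_{ij \in E[V \setminus T, T]} \varphi_{k, ij} - \sum_{ij \in E[T, V \setminus T]} \varphi_{k, ij}.
\]
Both sums range over edges in $\partial T$, and for each such edge the capacity constraint $-\kappa_{ij} \leq \varphi_{k, ij} \leq \kappa_{ij}$ gives the relevant term bounded above by $\kappa_{ij}$. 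Therefore
\[
  \sum_{i \in T} \sigma_{k, i} \;\leq\; \sum_{ij \in \partial T} \kappa_{ij}.
\]
Chaining the two inequalities yields the theorem.

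There is no serious obstacle; the argument is essentially bookkeeping. The only point requiring care is the direction of the edge orientation in the expansion of $\sigma_{k, i}$, which makes the cancellation of internal edges algebraically trivial but must be written out once to avoid sign errors. It is worth remarking explicitly that while the demand inequality is only available for $i \in \overline{S}_k$, the divergence identity and the capacity bound are valid for any subset $T \subset V$; the two components meet exactly at subsets of a single element of the dual scale, which is the content of the theorem.
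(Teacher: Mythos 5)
Your proof is correct and follows essentially the same route as the paper's: summing the demand constraints over $T$, cancelling the internal edges to reduce the total supply to a boundary sum, and bounding each boundary term by the capacity constraint. The paper presents the two inequalities in the opposite order but the argument is identical.
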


Therefore the following problem is a relaxation of the pseudo-flows problem.

\begin{lp}{Weighted isoperimetric inequalities at scale $\mathcal{\overline{S}}$}\label{lp:isoperimetric}
\begin{maxi*}[1]
	{}{\sum_{i \in V} \eta_i \hspace{8cm}}{}{}
	\addConstraint{\sum_{ij \in E} \kappa_{ij}}{\leq 1}{}
	\addConstraint{\sum_{i \in T} \eta_i}{\leq \sum_{ij \in \partial T} \kappa_{ij}}{\text{ for each } T \subset \overline{S} \in \mathcal{\overline{S}}}
  \addConstraint{\kappa_{ij} \geq 0, \eta_i \in \mathbb{R}}{}{}
\end{maxi*}
\end{lp}

Our goal in this section is to show that both problems are equivalent.

\subsection{Partitions of unity supported by \texorpdfstring{$\mathcal{S}$}{S}}
Consider the following optimization problem.

\begin{op}{Partition of unity supported by $\mathcal{S}$}\label{op:partition}
  Minimize $\varepsilon \geq 0$ such that there exists
  a partition of unity $\{ \phi_i \}_{i \in V}$ for $G$ at scale $\mathcal{S}$ with $\varepsilon$-variation, i.e.
  \begin{enumerate}
      \item the $\phi_i$'s are non-negative, i.e.
      \[
        \phi_i(j) \geq 0 \text{ for each } i, j \in V,
      \]
      \item the sum of the $\phi_i$'s is a constant unit function, i.e.
      \[
        \sum_{i \in V} \phi_i(j) = 1 \text{ for each } j \in V,
      \]
      \item the sum of the variations of the $\phi_i$'s on the edge $kj \in E$ does not exceed $\varepsilon$, i.e.
      \[
        \sum_{i \in V} | \phi_i(k) - \phi_i(j) | \leq \varepsilon \text{ for each } kj \in E,\text{ and}
      \]
      \item each $\phi_i$ is supported by $\mathcal{S}$, i.e.
      \[
      \supp \phi_i \subset S_i \text{ for each } i \in V.
      \]
  \end{enumerate}
\end{op}

Optimization Problem~\ref{op:partition} has the following linear programming formulation.

\begin{lp}{Partition of unity supported by $\mathcal{S}$}\label{lp:partition}
\begin{mini*}[1]{}{e \hspace{8cm}}{}{}
\addConstraint{\sum_{j \in V} f_{j, i}}{= 1}{\text{ for each } i \in V}
\addConstraint{f_{k, i} - f_{k, j}}{ \leq e_{i, kj}}{\text{ for each } ij \in E, k \in V}
\addConstraint{f_{k, j} - f_{k, i}}{ \leq e_{i, kj}}{\text{ for each } ij \in E, k \in V}
\addConstraint{\sum_{i \in V} e_{i, kj}}{\leq e}{\text{ for each } kj \in E}
\addConstraint{f_{i, j}}{= 0}{\text{ for each } i \in V, j \in V \setminus S_i}
\addConstraint{f_{i, j}, e_{i, kl} \geq 0, e}{\in \mathbb{R}}{\text{ for each } i,j \in V, kl \in E}
\end{mini*}
\end{lp}

\begin{proposition}\label{prop:partitions}
  Linear Problem~\ref{lp:measures} at scale $\mathcal{S}$ is equivalent to Linear Problem~\ref{lp:partition} at scale $\mathcal{\overline{S}}$, i.e. 
  we can pair admissible solutions of both problems
  by setting 
  \[
    x_{i, j} = f_{j, i}.
  \]
\end{proposition}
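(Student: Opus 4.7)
The plan is a direct verification that the prescribed substitution $x_{i,j} = f_{j,i}$, together with the identification $e_{ij,k} \leftrightarrow e_{k,ij}$ of the edge-variation variables and a shared objective variable $e$, sets up a bijection between admissible solutions of Linear Problem~\ref{lp:measures} at scale $\mathcal{S}$ and Linear Problem~\ref{lp:partition} at scale $\mathcal{\overline{S}}$ that preserves the objective value. I would simply walk through the constraints of each problem in turn and observe that they agree on the nose.

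The probability-measure normalization $\sum_{j \in V} x_{i,j} = 1$ of LP~\ref{lp:measures} becomes $\sum_{j \in V} f_{j,i} = 1$, which is the partition-of-unity normalization of LP~\ref{lp:partition}. The variation inequalities $\pm(x_{j,k} - x_{i,k}) \leq e_{ij,k}$ for $ij \in E$ and $k \in V$ translate under the substitution into $\pm(f_{k,j} - f_{k,i}) \leq e_{ij,k}$, and renaming $e_{ij,k}$ as $e_{k,ij}$ brings this into exactly the form of the LP~\ref{lp:partition} inequality $\pm(f_{k,j} - f_{k,i}) \leq e_{k,ij}$ (the roles of the vertex index and the edge index are simply interchanged). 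The edge bound $\sum_{k \in V} e_{ij,k} \leq e$ of LP~\ref{lp:measures} matches $\sum_{k \in V} e_{k,ij} \leq e$ in LP~\ref{lp:partition}, and the nonnegativity conditions match trivially.

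The only step requiring genuine care, and the reason the passage to the dual scale is forced on the right-hand side, is the support constraint. LP~\ref{lp:measures} at scale $\mathcal{S}$ imposes $x_{i,j} = 0$ for $j \notin S_i$, which after substitution reads $f_{j,i} = 0$ for $j \notin S_i$. LP~\ref{lp:partition} at scale $\mathcal{\overline{S}}$ imposes $f_{i,j} = 0$ for $j \notin \overline{S}_i$; rewriting this condition with indices swapped gives $f_{j,i} = 0$ for $i \notin \overline{S}_j$. Here I would appeal directly to the definition of the dual scale: $i \in \overline{S}_j$ if and only if $j \in S_i$, so $j \notin S_i$ is equivalent to $i \notin \overline{S}_j$, and the two support constraints coincide.

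Since the correspondence is a clear bijection on variables that preserves each constraint as well as the objective $e$, equivalence of the two linear problems follows immediately. I do not expect any real obstacle; the argument is pure bookkeeping, and the only insight worth emphasizing is that transposing the indices of $x$ swaps the role of ``support of the $i$-th measure'' with ``support of the $i$-th partition function,'' which is exactly why the dual scale appears in the statement.
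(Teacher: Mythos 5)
Your proof is correct; the paper itself offers no formal argument for this proposition (it only illustrates the transposition on the example table from Section~\ref{subsec:dual}), and your constraint-by-constraint verification is exactly the intended bookkeeping. In particular you correctly isolate the one nontrivial point, namely that the support constraints match precisely because $j \notin S_i$ is equivalent to $i \notin \overline{S}_j$ by the definition of the dual scale.
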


To see the equivalence on an example, consider the table in Section~\ref{subsec:dual} with values of~$\xi$.
In the $i$th column, values of the probability measure $\xi_i$ supported by $S_i$ are listed.
The rows of the table form a partition of unity and are supported by the dual scale.

\subsection{Partitions of unity subordinated to \texorpdfstring{$\mathcal{S}$}{S}}

In the last section, the elements $\phi_i$ of a partition of unity were in one-to-one correspondence with the elements $S_i$ of the cover $\mathcal{S}$.
Usually this is not required.
For our application we consider an equivalent problem without the one-to-one correspondence and with a non-standard requirement that functions that form the partition of unity are flat, i.e. constant on their support.

\begin{op}{Flat partition of unity}\label{op:flat partition}
  Minimize $\epsilon \geq 0$ such that there exists
  a finite  partition of unity $\{ \psi_\alpha \}$ for $G$ subordinated to scale $\mathcal{S}$ with $\epsilon$-variation and flat elements, i.e.
  \begin{enumerate}
      \item each $\psi_\alpha$ is non-negative, i.e. for each $\alpha$
      \[
        \psi_\alpha(i) \geq 0 \text{ for each } i \in V,
      \]
      \item the elements of the partition are constant on their support, i.e.
      \[ 
      \psi_\alpha(j) = \psi_\alpha(k)
      \]
      for each $\alpha$ and each $j, k \in V$ such that $\psi(j), \psi(k) > 0$,
      \item the sum of the $\psi$'s is a constant unit function, i.e.
      \[ 
      \sum_\alpha \psi_\alpha(i) = 1 \text{ for each } i \in V,
      \]
      \item the sum of the variations of the $\psi$'s on each edge $kj \in E$ does not exceed $\epsilon$, i.e.
      \[ \sum_\alpha | \psi_\alpha(k) - \psi_\alpha(j) | \leq \varepsilon \text{ for each } kj \in E,\text{ and}
      \]
      \item each $\psi$ is supported by $\mathcal{S}$, i.e. 
      \[
      \supp \psi_\alpha \subset S \text{ for each } \alpha \text{ and some } S \in \mathcal{S}.
      \]
  \end{enumerate}
\end{op}

\begin{lemma}\label{lem:partitions1} We have
\[
\varepsilon \leq \epsilon.
\]
That is, for a graph $G$ at scale $\mathcal{S}$ the optimal solution of Optimization Problem~\ref{op:partition} is bounded by the optimal solution of Optimization Problem~\ref{op:flat partition}.
\end{lemma}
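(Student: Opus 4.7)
The plan is to prove this direction by an aggregation construction: given a flat partition of unity $\{\psi_\alpha\}$ subordinated to $\mathcal{S}$ with variation at most $\epsilon$, I will build a partition of unity $\{\phi_i\}_{i \in V}$ of the type required by Optimization Problem~\ref{op:partition} whose variation is also at most $\epsilon$. Since this produces an admissible solution to Optimization Problem~\ref{op:partition} with cost $\epsilon$, the optimum $\varepsilon$ is at most $\epsilon$. The flatness condition is not used in this direction; it will only show up when proving the opposite inequality.

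Concretely, by condition~(5) of Optimization Problem~\ref{op:flat partition}, for each index $\alpha$ there exists at least one vertex $i$ with $\supp \psi_\alpha \subset S_i$. Choose such an $i$ and call it $i(\alpha)$; this gives a function $\alpha \mapsto i(\alpha)$ from the (finite) index set of the flat partition into $V$. I then define
\[
  \phi_i = \sum_{\alpha \colon i(\alpha) = i} \psi_\alpha,
\]
with the convention that an empty sum is the zero function.

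Next I would verify the four conditions of Optimization Problem~\ref{op:partition}. Non-negativity is immediate from $\psi_\alpha \geq 0$. The unit-sum condition follows from
\[
  \sum_{i \in V} \phi_i(j) = \sum_{i \in V} \sum_{\alpha \colon i(\alpha)=i} \psi_\alpha(j) = \sum_\alpha \psi_\alpha(j) = 1.
\]
The support condition $\supp \phi_i \subset S_i$ holds because every summand $\psi_\alpha$ in the definition of $\phi_i$ satisfies $\supp \psi_\alpha \subset S_{i(\alpha)} = S_i$. Finally, the variation bound follows from the triangle inequality applied edge by edge:
\[
  \sum_{i \in V} |\phi_i(k) - \phi_i(j)| \leq \sum_{i \in V} \sum_{\alpha \colon i(\alpha)=i} |\psi_\alpha(k) - \psi_\alpha(j)| = \sum_\alpha |\psi_\alpha(k) - \psi_\alpha(j)| \leq \epsilon,
\]
for each edge $kj \in E$, using condition~(4) of Optimization Problem~\ref{op:flat partition}.

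There is no real obstacle in this direction; the entire argument is a relabeling plus one triangle inequality. The only subtlety is that the choice $\alpha \mapsto i(\alpha)$ need not be canonical when $\supp \psi_\alpha$ is contained in several of the $S_i$, but any measurable selection works since the inequality chain above does not depend on which selection is made. Consequently, $\{\phi_i\}_{i \in V}$ is an admissible solution of Optimization Problem~\ref{op:partition} witnessing $\varepsilon \leq \epsilon$.
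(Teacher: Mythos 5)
Your proposal is correct and follows essentially the same route as the paper: choose $i(\alpha)$ with $\supp\psi_\alpha\subset S_{i(\alpha)}$, aggregate $\phi_i=\sum_{\alpha\colon i(\alpha)=i}\psi_\alpha$, and verify the four conditions, with the variation bound coming from the triangle inequality. Your observation that flatness is not needed in this direction also matches the remark the paper makes immediately after its proof.
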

\begin{proof}
Let $\{\psi_\alpha\}$ be a finite partition of unity subordinated to $\mathcal{S} = \{ S_i \}_{i \in V}$ where the functions $\psi_\alpha$ only assume one nonzero value. For every $\alpha\in A$, we choose a vertex $i=i(\alpha)$ with the property that $\supp(\psi_\alpha)\subseteq S_i$. 

We put 
\[
  \phi_i(j)=\sum_{\alpha \colon i=i(\alpha)}\psi_{\alpha}(j). 
\]

Then, if $j\in \supp(\phi_i)$, then there is some $\alpha$ such that $i(\alpha)=i$ and so $j\in \supp(\psi_\alpha)$, which is to say $j\in S_i$. Next, 
\[
\sum_{i\in V}\phi_i(j)=\sum_{i\in V}\left(\sum_{\alpha\colon i=i(\alpha)}\psi_\alpha(j)\right)=\sum_{\alpha}\psi_\alpha(j)=1.
\]

Finally, if we fix an edge, $jk$, then 
\begin{multline*}
\sum_{i\in V}|\phi_i(j)-\phi_i(k)|=\sum_{i\in V}\left|\sum_{\alpha\colon i=i(\alpha)}\psi_\alpha(j)-\sum_{\alpha\colon i=i(\alpha)}\psi_\alpha(k)\right|\\ 
\le \sum_{i\in V}\sum_{\alpha\colon i=i(\alpha)} |\psi_\alpha(j)-\psi_\alpha(k)|\le \varepsilon.
\end{multline*}
\end{proof}

Note that in the proof we do not use the assumption that the $\psi_\alpha$'s are flat.
We will need this variant of partitions of unity in the next section.
We also have the reversed inequality.

\begin{lemma}\label{lem:partitions2}
We have
\[
\varepsilon \geq \epsilon.
\]
That is, for graph $G$ at scale $\mathcal{S}$ the optimal solution of Optimization Problem~\ref{op:flat partition} is bounded by the optimal solution of Optimization Problem~\ref{op:partition}.
\end{lemma}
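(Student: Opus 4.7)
The plan is to take an optimal partition of unity $\{\phi_i\}_{i \in V}$ realizing the value $\varepsilon$ in Optimization Problem~\ref{op:partition} and replace each $\phi_i$ by its level-set (coarea / layer-cake) decomposition. Since $G$ is finite, each $\phi_i$ assumes only finitely many distinct values, so this produces a \emph{finite} partition of unity all of whose elements are flat. The resulting partition will be subordinated to $\mathcal{S}$ (because each level set of $\phi_i$ lies in $\supp \phi_i \subset S_i$) and, as I explain below, its variation will be bounded by the variation of $\{\phi_i\}$.

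Concretely, for each $i \in V$ list the distinct values taken by $\phi_i$ as $0 = v_{i,0} < v_{i,1} < \cdots < v_{i,m_i}$ and, for $\ell = 1, \ldots, m_i$, define
\[
\psi_{i,\ell}(j) = (v_{i,\ell} - v_{i,\ell-1}) \cdot \mathbf{1}_{\{\phi_i \geq v_{i,\ell}\}}(j).
\]
By construction each $\psi_{i,\ell}$ is non-negative and takes the single nonzero value $v_{i,\ell} - v_{i,\ell-1}$ on its support, so conditions (1) and (2) of Optimization Problem~\ref{op:flat partition} are immediate. Moreover $\supp \psi_{i,\ell} \subset \supp \phi_i \subset S_i$, giving condition (5), and the index set is finite.

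The remaining conditions both follow from the telescoping identity
\[
\phi_i(j) = \sum_{\ell : \phi_i(j) \geq v_{i,\ell}} (v_{i,\ell} - v_{i,\ell-1}).
\]
Summing first over $\ell$ and then over $i$ yields $\sum_{i, \ell} \psi_{i,\ell}(j) = \sum_i \phi_i(j) = 1$ for every $j$, which is condition (3). For condition (4), fix an edge $kj \in E$ and $i \in V$, and assume without loss of generality that $\phi_i(k) \geq \phi_i(j)$. Then the quantity $\mathbf{1}_{\{\phi_i \geq v_{i,\ell}\}}(k) - \mathbf{1}_{\{\phi_i \geq v_{i,\ell}\}}(j)$ equals $1$ precisely when $v_{i,\ell} \in (\phi_i(j), \phi_i(k)]$ and vanishes otherwise, so telescoping gives $\sum_\ell |\psi_{i,\ell}(k) - \psi_{i,\ell}(j)| = \phi_i(k) - \phi_i(j) = |\phi_i(k) - \phi_i(j)|$. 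Summing over $i$ then produces
\[
\sum_{i, \ell} |\psi_{i,\ell}(k) - \psi_{i,\ell}(j)| = \sum_{i \in V} |\phi_i(k) - \phi_i(j)| \leq \varepsilon.
\]

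This exhibits an admissible solution of Optimization Problem~\ref{op:flat partition} with cost $\varepsilon$, hence $\epsilon \leq \varepsilon$. There is no real obstacle in the argument; the only step that deserves care is the verification of condition (4), but it reduces to the one-dimensional telescoping observation above, which is the finite analogue of the standard coarea identity $|\phi(k) - \phi(j)| = \int_0^\infty |\mathbf{1}_{\{\phi \geq t\}}(k) - \mathbf{1}_{\{\phi \geq t\}}(j)|\, dt$.
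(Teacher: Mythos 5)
Your proposal is correct and is essentially identical to the paper's proof: both decompose each $\phi_i$ into its finitely many level sets (the layer-cake decomposition), observe that each layer is flat and supported in $S_i$, and verify conditions (3) and (4) via the same telescoping identity $|\phi_i(j)-\phi_i(k)|=\sum_\ell|\psi_{i,\ell}(j)-\psi_{i,\ell}(k)|$. No further changes are needed.
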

\begin{proof} Let $\{\phi_i\}$ be a partition of unity with $\supp(\phi_i)\subset S_i$ and $\sum_{i\in V}|\phi_i(j)-\phi_i(k)|\le \varepsilon$ whenever $jk$ is an edge. 

For each $i\in V$, the function $\phi_i$ takes on only finitely many (say $n_i$-many) values: $0=y^i_0<y^i_1<\cdots<y^i_{n_i}$. Put 

\[\psi_{\alpha^i_r}(j) = \begin{cases} 0 &\mbox{if } \phi_i(j)\le y^i_{r-1}; \\
y^i_r-y^i_{r-1} & \mbox{otherwise.}\end{cases}\] 

We observe that for each $i$ and each $r\in \{1,\ldots, n_i\}$, the function $\phi_{\alpha^i_r}$ takes on exactly one nonzero value. Moreover, it is easy to see that $\supp(\psi_{\alpha^i_r})\subset S_i$ and that for every $j$, if we sum over all values of $\alpha$, we obtain: \[\sum_{\alpha}\psi_{\alpha}(j)=\sum_{i\in V}\sum_{r=1}^{n_i}\psi_{\alpha^i_r}(j)=\sum_{i\in V}\phi_i(j)=1.\]

Finally, it is easy to verify that, \[|\phi_i(j)-\phi_i(k)|=\sum_{r=1}^{n_i}|\psi_{\alpha^i_r}(j)-\psi_{\alpha^i_r}(k)|,\] and so if $jk$ is an edge, then
\[\sum_{\alpha}|\psi_{\alpha}(j)-\psi_{\alpha}(k)|=\sum_{i\in V}\sum_{r=1}^{n_i}|\psi_{\alpha^i_r}(j)-\psi_{\alpha^i_r}(k)|=\sum_{i\in V}|\phi_i(j)-\phi_i(k)|\le \varepsilon.\]

\end{proof}

As a corollary we obtain the following theorem.
\begin{theorem}\label{thm:measures equals partition}
Let $G=(V,E)$ be a finite graph, let $\varepsilon>0$, and let $\mathcal{S}=\{S_i\}_{i\in V}$ be a scale. As before, for $S_i\in \mathcal{S}$, we let $\bar{S}_i=\{j\in V\colon j\in S_i\}$. The following are equivalent:
\begin{enumerate}
    \item there exists a collection $\{\xi_i\}_{i\in V}$ of probability measures on $G$ with $\varepsilon$ variation at scale $\mathcal{S}$;
    \item there exists a partition of unity $\{\phi_{i}\}_{i\in V}$ on $G$ with $\varepsilon$ variation at scale $\overline{\mathcal{S}}$; and 
    \item there exists a flat partition of unity $\{\psi\}$ on $G$ with $\varepsilon$ variation subordinated to $\overline{\mathcal{S}}$.
\end{enumerate}
\end{theorem}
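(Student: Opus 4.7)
The plan is to assemble the theorem from results already established in this section; no new substantive argument is required beyond correctly tracking how the scale dualizes as we move between formulations. The equivalence of (1) and (2) is exactly Proposition~\ref{prop:partitions}, so I would spell out what the pairing $x_{i,j}=f_{j,i}$ does to the constraints. Reading the array $(x_{i,j})$ columnwise gives probability measures $\xi_i$ with $\supp \xi_i \subset S_i$; reading the same numbers rowwise gives $\phi_j(i):=f_{j,i}=x_{i,j}$, and $\sum_j \phi_j(i) = \sum_j x_{i,j} = 1$ makes $\{\phi_j\}_{j\in V}$ a partition of unity. The support constraint $x_{i,j}=0$ for $j \notin S_i$ reads as $\phi_j(i)=0$ whenever $i \notin S_j$, i.e. whenever $j \notin \overline{S}_i$, so $\supp \phi_j \subset \overline{S}_j$, which places the partition at scale $\overline{\mathcal{S}}$. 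Because the edge objective $\sum_k |x_{i,k}-x_{j,k}|$ is symmetric in its two index slots, the minimal $\varepsilon$ on both sides coincides.

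For the equivalence of (2) and (3), I would invoke Lemmas~\ref{lem:partitions1} and~\ref{lem:partitions2}. These lemmas are stated for an arbitrary scale, so they apply verbatim at the scale $\overline{\mathcal{S}}$. Lemma~\ref{lem:partitions2}, applied at $\overline{\mathcal{S}}$, takes a partition of unity $\{\phi_i\}_{i\in V}$ with $\supp \phi_i \subset \overline{S}_i$ and slices each $\phi_i$ into its level-set layers to produce a flat partition of unity $\{\psi_\alpha\}$ subordinated to $\overline{\mathcal{S}}$, with the edge-variation preserved; this gives (2)$\Rightarrow$(3). Conversely, Lemma~\ref{lem:partitions1}, applied at $\overline{\mathcal{S}}$, starts with a flat partition $\{\psi_\alpha\}$ subordinated to $\overline{\mathcal{S}}$, picks for each $\alpha$ a vertex $i=i(\alpha)$ with $\supp \psi_\alpha \subset \overline{S}_{i(\alpha)}$, and aggregates $\phi_i := \sum_{\alpha:\ i(\alpha)=i}\psi_\alpha$; the triangle inequality controls the variation, yielding (3)$\Rightarrow$(2).

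Chaining these gives (1)$\Leftrightarrow$(2)$\Leftrightarrow$(3). There is no real obstacle; the only mild point of care is the consistent dualization of the scale when passing between the primal ``measures'' description (at $\mathcal{S}$) and the ``partition'' descriptions (at $\overline{\mathcal{S}}$), which is handled cleanly by the transposition $x_{i,j}=f_{j,i}$ in Proposition~\ref{prop:partitions}.
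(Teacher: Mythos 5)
Your proposal is correct and follows essentially the same route as the paper: the paper's proof is precisely the citation of Proposition~\ref{prop:partitions} for $(1)\Leftrightarrow(2)$ and of Lemmas~\ref{lem:partitions1} and~\ref{lem:partitions2} applied at the dual scale for $(2)\Leftrightarrow(3)$, and your unpacking of the transposition $x_{i,j}=f_{j,i}$ matches the intended argument. (Only a cosmetic slip: the support constraint reads $\phi_j(i)=0$ whenever $j\notin S_i$, i.e.\ whenever $i\notin\overline{S}_j$, which still yields $\supp\phi_j\subset\overline{S}_j$ as you conclude.)
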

\begin{proof}
  Equivalence $(1) \Leftrightarrow (2)$ follows from Proposition~\ref{prop:partitions}.
  Equivalence $(2) \Leftrightarrow (3)$ follows from Lemma~\ref{lem:partitions1} and Lemma~\ref{lem:partitions2} applied to the dual scale $\mathcal{\overline{S}}$.
\end{proof}
\subsection{The dual problem to maximal weighted isoperimetric number}
We are ready to construct a dual problem to Linear Problem~\ref{lp:isoperimetric}
  and to show how its optimal solution yields a feasible solution for Optimization Problem~\ref{op:flat partition}.

\begin{lp}{Weighted isoperimetric inequalities dual at scale $\mathcal{\overline{S}}$}\label{lp:isoperimetric dual}
\begin{mini*}[1]
  {}{a}{}{}
  \addConstraint{\sum_{T \colon ij \in \partial T} z_T}{\leq a}{\text{ for each } ij \in E}
  \addConstraint{\sum_{T \colon i \in T} z_T}{=1}{\text{ for each } i \in V}
\end{mini*}
\end{lp}

\begin{lemma}\label{lem:isoperimetric dual}
  Linear Problem~\ref{lp:isoperimetric dual} is dual to Linear Problem~\ref{lp:isoperimetric}.
\end{lemma}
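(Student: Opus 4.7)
The plan is to write Linear Problem~\ref{lp:isoperimetric} in standard form and then mechanically apply the rules of linear programming duality. In the primal, the decision variables are $\eta_i \in \mathbb{R}$ (free) for $i \in V$ and $\kappa_{ij} \ge 0$ for $ij \in E$, and the objective $\sum_{i \in V}\eta_i$ is to be maximized. The constraints come in two families: the single capacity constraint $\sum_{ij \in E} \kappa_{ij} \le 1$, to which I will attach a dual multiplier $a$, and one isoperimetric constraint per $T \subset \overline{S} \in \overline{\mathcal{S}}$, rewritten as $\sum_{i \in T} \eta_i - \sum_{ij \in \partial T} \kappa_{ij} \le 0$, to which I will attach a dual multiplier $z_T$. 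Since the primal is a maximization with $\le$ constraints, both $a$ and all $z_T$ must be nonnegative in the dual minimization problem.

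Next I assemble the dual by matching coefficients column by column. The dual objective is the inner product of the primal right-hand sides with the dual variables, giving $a \cdot 1 + \sum_T z_T \cdot 0 = a$, so the dual objective is simply $\min a$. Because $\eta_i$ is a free primal variable, the corresponding dual relation is an equality; reading off the $\eta_i$-coefficient, which is $1$ in the objective and $1$ in each isoperimetric constraint with $i \in T$ and zero elsewhere, yields $\sum_{T : i \in T} z_T = 1$ for each $i \in V$. Because $\kappa_{ij} \ge 0$, the corresponding dual relation is an inequality; the $\kappa_{ij}$-coefficient is $0$ in the objective, $1$ in the capacity constraint, and $-1$ in each isoperimetric constraint with $ij \in \partial T$, producing $a - \sum_{T : ij \in \partial T} z_T \ge 0$, i.e.\ $\sum_{T : ij \in \partial T} z_T \le a$ for each $ij \in E$. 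Collected, these relations are exactly Linear Problem~\ref{lp:isoperimetric dual}.

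The argument is purely a bookkeeping exercise in the primal-dual conversion table, and I expect no real obstacle. The only point that deserves care is the sign in $-\sum_{ij \in \partial T} \kappa_{ij}$: this is what causes $z_T$ to enter the $\kappa_{ij}$-column with a negative sign and hence forces the dual inequality to point in the direction $\sum_{T : ij \in \partial T} z_T \le a$ rather than the reverse. If desired, one can present the same verification more self-containedly in the style of the first half of the proof of Theorem~\ref{thm:duality}: multiply the primal capacity constraint by $a \ge 0$, multiply each isoperimetric inequality by $z_T \ge 0$, sum them, and demand that the resulting linear combination dominate $\sum_{i \in V} \eta_i$ for all feasible $(\eta, \kappa)$; the equality and inequality conditions that fall out of this requirement are precisely the constraints of Linear Problem~\ref{lp:isoperimetric dual}, and weak duality together with the LP duality theorem then identifies optimal values.
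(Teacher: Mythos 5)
Your proposal is correct and is essentially the paper's own argument: the paper proves the lemma by forming the nonnegative combination $\sum_T z_T(\sum_{ij\in\partial T}\kappa_{ij}-\sum_{i\in T}\eta_i)+a(1-\sum_{ij\in E}\kappa_{ij})$ and regrouping by $\eta_i$ and $\kappa_{ij}$, which is exactly the self-contained verification you sketch at the end, and your table-based reading of the same coefficients yields the identical constraints. The only point worth noting is that the nonnegativity $z_T\ge 0$ (and $a\ge 0$), which you correctly flag as necessary, is left implicit in the statement of Linear Problem~\ref{lp:isoperimetric dual}.
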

\begin{proof}
This follows from the dual problem derivation:
\[
\sum_{i \in V} \eta_i \leq
\sum_{i \in V} \eta_i + 
\sum_{T \colon T \subset \overline{S} \in \mathcal{\overline{S}}} z_T 
(\sum_{ij \in \partial T} \kappa_{ij} - \sum_{i \in T} \eta_i) + a(1 - \sum_{ij \in E} \kappa_{ij}) =
\]
\[
= a + \sum_{ij \in E} \kappa_{ij}(-a + \sum_{T \colon ij \in \partial T} z_T)
  + \sum_{i \in V} \eta_i (1 - \sum_{T \colon i \in T} z_T) \leq a.
\]
\end{proof}

\begin{lemma}\label{lem:isoperimetric dual to partition}
  Let $({\bf z}, a)$ be an admissible solution of Linear Problem~\ref{lp:isoperimetric dual}.
  For each $T$ let
  \[
    \psi_T(i) = \left\{
    \begin{array}{ll}
      z_T & \text{ if } i \in T, T \subset \overline{S} \in \mathcal{\overline{S}} \\
      0 & \text{ otherwise }
    \end{array}
    \right.
  \]
  Then $\{ \psi_T \}$ is a flat partition of unity subordinated to $\mathcal{\bar S}$ with $a$-variance.
\end{lemma}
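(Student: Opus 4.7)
The plan is to verify directly that the family $\{\psi_T\}$ satisfies all five conditions of Optimization Problem~\ref{op:flat partition}, using the two constraints of Linear Problem~\ref{lp:isoperimetric dual} together with $z_T \ge 0$. The non-negativity $z_T \ge 0$ is implicit from the derivation in Lemma~\ref{lem:isoperimetric dual}, where each $z_T$ is introduced as the multiplier of a $\le$-constraint in a maximization problem; one can read it off the proof just given. The index set for the partition is the collection $\{T : T \subset \overline{S}_k \text{ for some } k \in V\}$.

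With this in hand, conditions (1), (2) and (5) are essentially immediate. Non-negativity (1) follows from $z_T \ge 0$; flatness (2) holds because $\psi_T$ is by construction identically equal to the single value $z_T$ on its support; and subordination (5) is built in, since $\supp \psi_T \subseteq T \subseteq \overline{S}_k$ for some $\overline{S}_k \in \mathcal{\overline{S}}$. The partition-of-unity condition (3) is precisely the content of the equality constraint of Linear Problem~\ref{lp:isoperimetric dual}: for each $i \in V$,
\[
    \sum_T \psi_T(i) \;=\; \sum_{T \colon i \in T} z_T \;=\; 1.
\]

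The only step with any real content is the variation bound (4), and even there the calculation is short. I would observe that for any edge $kj \in E$ and any index $T$ the difference $|\psi_T(k) - \psi_T(j)|$ equals $z_T$ when exactly one of $k,j$ lies in $T$, and is $0$ otherwise; equivalently it is nonzero precisely when $kj \in \partial T$. Summing over $T$ and invoking the first constraint of Linear Problem~\ref{lp:isoperimetric dual} gives
\[
    \sum_T |\psi_T(k) - \psi_T(j)| \;=\; \sum_{T \colon kj \in \partial T} z_T \;\le\; a.
\]
There is no genuine obstacle here: the lemma is a transcription in which the dual variables $z_T$ become the heights of the flat bumps, the equality constraints of the dual enforce partition of unity, and the edge constraints enforce the $a$-variation bound. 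The closest thing to a subtlety is just noticing that $z_T \ge 0$ should be carried through the dual derivation so that the $\psi_T$'s are legitimately non-negative.
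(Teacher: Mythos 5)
Your proposal is correct and follows essentially the same route as the paper: verify conditions (1), (2), (5) directly from the definition, derive (3) from the equality constraint $\sum_{T\colon i\in T} z_T = 1$, and derive (4) from the edge constraint via the observation that $|\psi_T(k)-\psi_T(j)|$ is nonzero exactly when $kj\in\partial T$. Your explicit remark that $z_T\ge 0$ must be carried through the dual derivation is a reasonable clarification but does not change the argument.
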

\begin{proof}
  Conditions (1), (2) and (5) of Optimization Problem~\ref{op:flat partition} follow directly from the definition of $\psi_T$.
  
  Condition (3):
  \[
    \sum_T \psi_T(i) = \sum_{T \colon i \in T} z_T = 1.
  \]
  
  Condition (4):
  \[
  \sum_T | \psi_T(i) - \psi_T(j) | = \sum_{T \colon ij \in \partial T} z_T \leq a.
  \]
\end{proof}

\begin{theorem}\label{thm:partition of unity dual}
  The optimal solution of Linear Problem~\ref{lp:isoperimetric} is equal to the optimal solution of Linear Problem~\ref{lp:pseudo-flows}.
\end{theorem}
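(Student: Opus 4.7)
The plan is to establish a chain of equalities connecting the optimum of Linear Problem~\ref{lp:isoperimetric} to the optimum of Linear Problem~\ref{lp:pseudo-flows} by passing through Linear Problem~\ref{lp:isoperimetric dual}, Optimization Problem~\ref{op:flat partition}, and the minimum variation $\varepsilon_{\mathcal{S}, G}$ of probability measures. One direction is already in hand: the weighted isoperimetric inequality derived just before Linear Problem~\ref{lp:isoperimetric} shows that every feasible solution of Linear Problem~\ref{lp:pseudo-flows} is feasible for Linear Problem~\ref{lp:isoperimetric} with the same objective value, so the optimum of Linear Problem~\ref{lp:pseudo-flows} is at most the optimum of Linear Problem~\ref{lp:isoperimetric}.

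For the reverse inequality, I would invoke Lemma~\ref{lem:isoperimetric dual} (LP strong duality) to identify the optimum of Linear Problem~\ref{lp:isoperimetric} with the optimum of Linear Problem~\ref{lp:isoperimetric dual}. Lemma~\ref{lem:isoperimetric dual to partition} already shows that every feasible dual solution $(\mathbf{z}, a)$ produces a flat partition of unity subordinated to $\mathcal{\overline{S}}$ with $a$-variation, so the optimum of Optimization Problem~\ref{op:flat partition} is at most the optimum of Linear Problem~\ref{lp:isoperimetric dual}. The key additional ingredient is the converse construction: from a flat partition of unity $\{\psi_\alpha\}$ subordinated to $\mathcal{\overline{S}}$ with $\varepsilon$-variation, I build a feasible $(\mathbf{z}, \varepsilon)$ for Linear Problem~\ref{lp:isoperimetric dual}. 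For each $T$ that arises as $\supp \psi_\alpha$ (contained in some $\overline{S} \in \mathcal{\overline{S}}$ by condition~(5) of Optimization Problem~\ref{op:flat partition}), set
\[
  z_T = \sum_{\alpha \colon \supp \psi_\alpha = T} c_\alpha,
\]
where $c_\alpha$ is the constant nonzero value of $\psi_\alpha$; put $z_T = 0$ otherwise. Then $\sum_{T \colon i \in T} z_T = \sum_\alpha \psi_\alpha(i) = 1$, and flatness makes $|\psi_\alpha(i) - \psi_\alpha(j)| = c_\alpha$ precisely when $ij \in \partial (\supp \psi_\alpha)$, so
\[
  \sum_{T \colon ij \in \partial T} z_T = \sum_\alpha |\psi_\alpha(i) - \psi_\alpha(j)| \leq \varepsilon
\]
on every edge $ij$. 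Hence $(\mathbf{z}, \varepsilon)$ is admissible and the optimum of Linear Problem~\ref{lp:isoperimetric dual} is at most the optimum of Optimization Problem~\ref{op:flat partition}.

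To close the loop, Theorem~\ref{thm:measures equals partition} identifies the optimum of Optimization Problem~\ref{op:flat partition} with $\varepsilon_{\mathcal{S}, G}$, which by Theorem~\ref{thm:duality} equals the optimum of Linear Problem~\ref{lp:pseudo-flows}. Chaining the four inequalities then forces equality throughout and proves the theorem. The main obstacle is the converse construction in the second paragraph; everything else is bookkeeping over previously established equivalences and duality results. The one subtlety is that distinct flat pieces may share the same support, which I handle by summing their values into a single $z_T$; flatness is precisely what makes the edge-variation sum agree with $\sum_{T \colon ij \in \partial T} z_T$.
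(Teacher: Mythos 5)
Your proof is correct and follows the same overall architecture as the paper's: the relaxation gives one inequality, and the other is routed through Linear Problem~\ref{lp:isoperimetric dual}, flat partitions of unity (Optimization Problem~\ref{op:flat partition}), Theorem~\ref{thm:measures equals partition}, and Theorem~\ref{thm:duality}. The substantive difference is your converse construction in the second paragraph, which converts a flat partition of unity $\{\psi_\alpha\}$ with $\varepsilon$-variation into a feasible point $(\mathbf{z},\varepsilon)$ of Linear Problem~\ref{lp:isoperimetric dual} by setting $z_T=\sum_{\alpha\colon\supp\psi_\alpha=T}c_\alpha$. This step does not appear in the paper, and it is not merely cosmetic: the paper's proof invokes only Lemma~\ref{lem:isoperimetric dual to partition}, which runs in the opposite direction (feasible dual point $\Rightarrow$ flat partition) and therefore shows that the optimum of Optimization Problem~\ref{op:flat partition} is at most the optimum of Linear Problem~\ref{lp:isoperimetric dual}; after the identifications with Linear Problems~\ref{lp:measures} and~\ref{lp:pseudo-flows}, that is the same inequality the relaxation already provides, so the argument as written never actually bounds the optimum of Linear Problem~\ref{lp:isoperimetric} from above by the optimum of Linear Problem~\ref{lp:pseudo-flows}. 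Your construction supplies exactly that missing bound, and your verification is sound: the cover constraint follows from $\sum_\alpha\psi_\alpha(i)=1$, the edge constraint follows because flatness makes $|\psi_\alpha(i)-\psi_\alpha(j)|$ equal to $c_\alpha$ precisely when $ij\in\partial(\supp\psi_\alpha)$, and aggregating pieces with a common support into a single $z_T$ is handled correctly. (An alternative way to close the same gap is Theorem~\ref{thm:lift and project} from the next section, whose surjectivity statement directly lifts any feasible point of Linear Problem~\ref{lp:isoperimetric} to one of Linear Problem~\ref{lp:pseudo-flows}; your route stays entirely within the duality/partition-of-unity framework and is self-contained at this point of the paper.)
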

\begin{proof}
  Linear Problem~\ref{lp:isoperimetric} is a relaxation of Linear Problem~\ref{lp:pseudo-flows}, so we have the ``$\geq$'' inequality between optimal values.
  
  Linear Problem~\ref{lp:pseudo-flows} is dual to Linear Problem~\ref{lp:measures}, which is equivalent to Optimization Problem~\ref{op:flat partition}. By Lemma~\ref{lem:isoperimetric dual to partition}, the value of $a$ in Linear Problem~\ref{lp:isoperimetric dual} bounds the optimal value of Optimization Problem~\ref{op:flat partition} from above. By Lemma~\ref{lem:isoperimetric dual}, Linear Problem~\ref{lp:isoperimetric dual} is dual to Linear Problem~\ref{lp:isoperimetric}, which establishes the ``$\leq$'' inequality.
\end{proof}

\subsection{Heawood graph}\label{sec:heawood}

Linear Problem~\ref{lp:pseudo-flows} has more variables than Linear Problem~\ref{lp:isoperimetric} but a lot fewer constraints. Usually it is easier to construct flows than to check exponentially many weighted isoperimetric inequalities. There are exceptions to this rule though.

The Heawood graph is the following $3$-regular graph on $14$ vertices with the property that the length of the shortest cycle (i.e. girth) is $6$.
\begin{center}
  \includegraphics[width=0.7\textwidth]{figures/iso/graph.tikz}
\end{center}
We will solve Linear Problem~\ref{lp:isoperimetric} for this graph at scale $S = 2$. Because of the symmetry of the graph it is reasonable to look for a solution with equal edge capacities and equal node demands (see Theorem~\ref{thm:averaging}).
The graph has $21$ edges and so we set $\kappa_{ij} = \frac 1{21}$ for each $ij \in E$.
Because the girth of the graph is $6$ and the scale is $2$, for each $T \subset S \subset \mathcal{S}$ the subgraph spanned by $T$ is a disjoint union of trees.
It is enough to check weighted isoperimetric inequalities for connected subgraphs.
A subtree with $n$ vertices in a $3$-regular graph has $n-1$ inside edges and $3n - 2(n-1) = n + 2$ edges on the boundary.
Weighted isoperimetric inequalities are of the form
\[
  n \cdot \eta \leq (n+2) \cdot \frac 1{21}.
\]
A $2$-ball in the Heawood graph has $10$ vertices so $n \leq 10$. Since
$\eta \leq \frac 1{21} + \frac 2{21 n}$, we have $\eta \leq \frac 1{21} + \frac 2{210} = \frac 2{35}$.
We get the following solution of Linear Problem~\ref{lp:isoperimetric}.

\begin{center}
  \includegraphics[width=0.9\textwidth]{figures/iso/capacity.tikz}
\end{center}

The optimal objective is $14 \cdot \frac 2{35} = \frac 45$.
The optimality of the solution may be verified by the observation that the solution  of the primal problem with probability measures that are constant on their supports has $\varepsilon = \frac 45$.

\section{Lift and project}\label{sec:lift}

We let $F$ denote the set of admissible solutions of Linear Problem~\ref{lp:pseudo-flows}
for a graph $G = (V, E)$ at dual scale $\mathcal{\overline{S}}$, i.e. we let
\[
F = \left\{ 
(\eta, \kappa, \varphi) \in \mathbb{R}^n \times \mathbb{R}^m \times \mathbb{R}^l \colon
\begin{array}{rrl}
& \sum_{ij \in E} \kappa_{ij} & \leq 1 \\
\forall_{ij \in E} & -\kappa_{ij} \leq \varphi_{k, ij} & \leq \kappa_{ij}\\
\forall_{k \in V, i \in \overline{S}_k} & \sigma_{k, i} & \geq \eta_i
\end{array}
\right\}
\]
where
\[
  \sigma_{k, i} = \sum_{m \in V\colon mi \in E} \varphi_{k, mi} - \sum_{m \in V\colon im \in E} \varphi_{k, im}
\]
and $n = |V|$, $m = |E|$, $l = |V|\cdot |E|$. For $k \in V$ we let the flow $\varphi_k \colon E \to \mathbb{R}$ be defined on all of $E$ even though only the values on $E[\overline{S}_k, V]$ are relevant.

We let $K$ denote the set of admissible solutions of Linear Problem~\ref{lp:isoperimetric}, i.e. we let
\[
K = \left\{ 
( \eta, \kappa ) \in \mathbb{R}^n \times \mathbb{R}^m \colon 
\begin{array}{rrl}
  & \sum_{ij \in E} \kappa_{ij} & \leq 1\\
  \forall_{T \subset \overline{S} \in \mathcal{\overline{S}}} & \sum_{i \in T} \eta_i & \leq \sum_{ij \in \partial T} \kappa_{ij} \\
  \forall_{ij \in E} & \kappa_{ij} & \geq 0
\end{array}
\right\}.
\]

In the previous section we proved that the maximal value of $\sum_{i \in V} \eta_i$ is equal on $K$ and $F$. In the present section we show that in fact $F$ is a lift of $K$.

\begin{theorem}\label{thm:lift and project}
  The projection
  \[
    \pi \colon \mathbb{R}^n \times \mathbb{R}^m \times \mathbb{R}^l \to \mathbb{R}^n \times \mathbb{R}^m
  \]
  maps $F$ onto $K$, i.e. for each $\eta$ and $\kappa$ that satisfy weighted isoperimetric inequalities there exists a family of flows $\varphi_k$ that satisfies demand $\eta$ under capacity $\kappa$.
\end{theorem}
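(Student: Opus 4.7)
The plan is to establish the nontrivial inclusion $K \subseteq \pi(F)$; the reverse inclusion $\pi(F) \subseteq K$ was already shown in the unnumbered theorem preceding Linear Problem~\ref{lp:isoperimetric}, where the weighted isoperimetric inequalities were derived from an admissible pseudo-flow. Fix $(\eta, \kappa) \in K$. The constraints defining $F$ that involve the flow variables decouple across $k \in V$: both the capacity bounds $-\kappa_{ij} \leq \varphi_{k, ij} \leq \kappa_{ij}$ and the demand constraints $\sigma_{k, i} \geq \eta_i$ for $i \in \overbar S_k$ involve only $\varphi_k$, with $\kappa$ and $\eta$ fixed. Hence the problem reduces, for each $k \in V$ separately, to a linear feasibility problem: construct $\varphi_k : E \to \mathbb{R}$ with $|\varphi_{k, ij}| \leq \kappa_{ij}$ and $\sigma_{k, i} \geq \eta_i$ for every $i \in \overbar S_k$.

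I would prove this feasibility by contradiction via Farkas' lemma. Introduce multipliers $a_{ij}, b_{ij} \geq 0$ for the two-sided capacity constraints and $c_i \geq 0$ for the demand constraints. Requiring that the coefficient of each $\varphi_{k, ij}$ in the Farkas combination vanish forces $a_{ij} - b_{ij} = c_j - c_i$ (after extending $c$ by zero outside $\overbar S_k$). Since $a_{ij}, b_{ij} \geq 0$ are otherwise free, the optimal choice $a_{ij} = (c_j - c_i)_+$, $b_{ij} = (c_i - c_j)_+$ reduces $a_{ij} + b_{ij}$ to $|c_j - c_i|$. Thus infeasibility is equivalent to the existence of $c : V \to [0, \infty)$ vanishing off $\overbar S_k$ with
\[
\sum_{i \in V} \eta_i c_i \;>\; \sum_{ij \in E} \kappa_{ij}\, |c_j - c_i|.
\]
To rule this out, I apply the coarea (layer-cake) formula. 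For $t > 0$ let $T_t = \{ i : c_i \geq t \}$, which lies inside $\overbar S_k$ since $c$ is supported there. Using $c_i = \int_0^\infty \mathbf{1}[c_i \geq t]\,dt$ and $|c_j - c_i| = \int_0^\infty \mathbf{1}[ij \in \partial T_t]\,dt$, both sides rewrite as
\[
\sum_{i \in V} \eta_i c_i = \int_0^\infty \sum_{i \in T_t} \eta_i\, dt, \qquad \sum_{ij \in E} \kappa_{ij}\, |c_j - c_i| = \int_0^\infty \sum_{ij \in \partial T_t} \kappa_{ij}\, dt.
\]
Applying the weighted isoperimetric inequality defining $K$ to each $T_t \subseteq \overbar S_k \in \mathcal{\overbar S}$ and integrating gives the reverse of the Farkas inequality, a contradiction; therefore $\varphi_k$ exists.

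The main technical hurdle I anticipate is the Farkas bookkeeping: I need to track the signs carefully so that the coefficient of each $\varphi_{k, ij}$ comes out to $a_{ij} - b_{ij} + c_i \mathbf{1}[i \in \overbar S_k] - c_j \mathbf{1}[j \in \overbar S_k]$, and then confirm that extending $c$ by zero outside $\overbar S_k$ introduces no spurious contributions in the coarea step (it does not, since $\eta_i c_i = 0$ for $i \notin \overbar S_k$ and only level sets at $t > 0$ appear, all of which lie in $\overbar S_k$). Once the certificate is in the stated clean form, the coarea identity is a standard discrete translation between Cheeger-type isoperimetric inequalities and $\ell^1$ functional inequalities, and the proof concludes without further difficulty.
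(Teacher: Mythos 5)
Your proof is correct, but it takes a genuinely different route from the paper's. The paper fixes $k$, takes (by compactness of the box $\varPhi$ of capacity-respecting flows) a $\varphi$ minimizing the total deficit $\sum_{i \in \overline{S}_k}\max\{0,\eta_i-\sigma_i\}$, and then runs a Ford--Fulkerson-style augmenting-path argument: starting from a deficient node $i_0$ it builds the maximal set $T\subset\overline{S}_k$ reachable by unsaturated paths, shows $\sum_{i\in T}\sigma_i<\sum_{i\in T}\eta_i\leq\sum_{ij\in\partial T}\kappa_{ij}$, and derives a contradiction from the resulting unsaturated boundary edge. You instead decouple over $k$ (as the paper does), but certify feasibility of each single-$k$ subproblem by Farkas' lemma: an infeasibility certificate is a nonnegative $c$ supported in $\overline{S}_k$ with $\sum_i\eta_i c_i>\sum_{ij\in E}\kappa_{ij}|c_j-c_i|$, and the layer-cake decomposition $T_t=\{i\colon c_i\geq t\}$ converts the weighted isoperimetric inequalities (available for every $T_t\subset\overline{S}_k$, $t>0$) into exactly the reverse $\ell^1$ inequality. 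Your bookkeeping checks out: the coefficient of $\varphi_{k,ij}$ is indeed $a_{ij}-b_{ij}+c_i-c_j$ with $c$ extended by zero, the optimal multipliers give $|c_j-c_i|$, and the coarea identities are correct since $ij\in\partial T_t$ precisely when $\min(c_i,c_j)<t\leq\max(c_i,c_j)$. What each approach buys: yours is shorter given the LP-duality machinery and makes explicit the standard coarea bridge between set-based isoperimetric inequalities and functional $\ell^1$ inequalities (a bridge the paper essentially rediscovers later in the single-flow/Cheeger section); the paper's argument is more elementary and constructive, exhibiting how the flows can actually be built by successive augmentation, which matches the algorithmic flavor of the rest of the paper.
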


\begin{proof}
  Let $(\eta, \kappa) \in K$.
  We will show that for each $k \in V$ there exists a flow $\varphi_k \colon E \to \mathbb{R}$ such that
  \[
    \forall_{ij \in E}\ -\kappa_{ij} \leq \varphi_{k, ij} \leq \kappa_{ij}
  \]
  and
  \[
    \forall_{i \in \overline{S}_k}\ \sigma_{k, i} \geq \eta_i.
  \]
  
  Consider
  \[
  \varPhi = \left\{
  \varphi \colon E \to \mathbb{R} \colon \forall_{ij \in E}\ -\kappa_{ij} \leq \varphi_{ij} \leq \kappa_{ij}
  \right\}.
  \]
  Clearly, $\varPhi$ is a compact subset of $\mathbb{R}^m$.

  Fix $k \in V$. There exists $\varphi \in \varPhi$ such that the value of
    \begin{equation}\label{eqn:eta-sigma}\tag{*}
          \sum_{i \in \overline{S}_k} \max\left\{0, \eta_i - \left(\sum_{m \in V\colon mi \in E} \varphi_{mi} - \sum_{m \in V\colon im \in E} \varphi_{im}\right) \right\}
  \end{equation}
  is minimal. 
  If the value of the sum \eqref{eqn:eta-sigma} is $0$, then we put $\varphi_k = \varphi$ and we are done.
  Assume therefore that the value of the sum \eqref{eqn:eta-sigma} is positive.
  
  For each $i \in V$ we let
  \[
    \sigma_i = \sum_{m \in V\colon mi \in E} \varphi_{mi} - \sum_{m \in V\colon im \in E} \varphi_{im}
  \]
  be the supply of the pseudo-flow $\varphi$ at node $i$.

  Let $T \subset V$ and let $i_0 \in T$. Let $j \in V$, $j \neq i_0$ ($j$ may be outside of $T$). We say that the flow from $j$ to $i_0$ is \df{not saturated} (in $T$) if there exists a sequence $i_0, i_1, i_2, \ldots, i_k = j$, such that for each $0 \leq m < k$ we have $i_m \in T$ and either
  \[
    i_{m}i_{m+1} \in E \text{ and } \varphi_{i_{m}i_{m+1}} > -\kappa_{i_{m}i_{m+1}}
  \]
  or
  \[
    i_{m+1}i_{m} \in E \text{ and } \varphi_{i_{m+1}i_{m}} < \kappa_{i_{m+1}i_{m}}.
  \]
  
  Observe that if a flow from $j$ to $i_0$ is not saturated, then there exist a $\delta > 0$
  and a $\varphi' \in \varPhi$ with supply $\sigma_{i_0} + \delta$ at $i_0$, with supply $\sigma_j - \delta$ at $j$, and with all other supplies unchanged (we flow an additional $\delta$ along the path from $j$ to $i_0$ and pick $\delta$ small enough such that the edge capacities are not exceeded).
  
  Since the value of \eqref{eqn:eta-sigma} is positive, there exists $i_0 \in \overbar{S}_k$
    such that $\sigma_{i_0} < \eta_{i_0}$.
  Let $T \subset \overline{S}_k$ be a set with maximal cardinality that satisfies the following conditions:
  \begin{enumerate}
    \item $i_0 \in T$ and
    \item for each $j \in T$, $j \neq i_0$ , the flow from $j$ to $i_0$ is not saturated.
  \end{enumerate}
  Let $j \in T, j \neq i_0$. If $\sigma_j > \eta_j$, then the flow $\varphi'$ with supply $\sigma_{i_0} + \delta$ at $i_0$ and supply $\sigma_j - \delta$ at $j$ (with other supplies unchanged) would have smaller value of \eqref{eqn:eta-sigma} for small enough $\delta$. Hence $\sigma_j \leq \eta_j$ for each $j \in T$. Therefore the total supply in $T$ does not meet the total demand, i.e. 
  \[
  \sum_{i \in T} \sigma_i < \sum_{i \in T} \eta_i.
  \]

  Since $T \subset \overbar S_k \in \mathcal{\overbar S}$, we have a weighted isoperimetric inequality for $T$, which combined with (1) and with the definition of $\sigma_i$ gives
  \[
     \sum_{ij \in E[V \setminus T, T]} \varphi_{ij} - \sum_{ij \in E[T, V \setminus T]} \varphi_{ij}  = \sum_{i \in T} \sigma_i < \sum_{i \in T} \eta_i \leq \sum_{ij \in \partial T} \kappa_{ij}.
  \]
  So there exists an edge $ij \in \partial T$ such that the flow along $ij$ {\bf into} $T$ is not maximal.

  Without any loss of generality we may assume that $i \in T$ and $j \not\in T$.
  From the choice of $ij$, the flow from $j$ to $i_0$ is not saturated in $T$.
  If $j \not\in \overbar{S}_k$, then we have no demand constraint for $j$ and we may flow some $\delta > 0$ from $j$ to $i_0$, contradicting the assumption that $\varphi$ minimizes \eqref{eqn:eta-sigma}.
  If $j \in \overbar{S}_k$, then the set $T \cup \{ j \}$ satisfies conditions (1) and (2), contradicting the maximality of $T$. The contradiction shows that the value of \eqref{eqn:eta-sigma} had to be $0$.
\end{proof}

\section{Averaged solutions}

The set of optimal solutions of a linear problem is a convex polyhedron.
Hence the center of mass of a set of optimal solutions is an optimal solution.
In Linear Problem~\ref{lp:pseudo-flows}, if we relabel the vertices of $G$ using an automorphism, we get a new solution.
If there is enough symmetry in the graph, this guarantees that we can find an optimal solution with equal edge capacities and/or equal node supplies. 

In the following theorem, we do not require the automorphisms to preserve the orientation of $G$. Indeed, when $ij\in E$ then for the purposes of what follows, we may define $\kappa_{ji}$ to be equal to $\kappa_{ij}$.

\begin{theorem}\label{thm:averaging}
	Let $G=(V,E)$ be a graph. Let $\Gamma$ be a group that acts on $G$ by automorphisms (here we allow orientation of edges to be reversed by an automorphism). 
	Let $\{ S_i \subset V \colon i \in V\}$ be a family of subsets of $V$ that is invariant under the action of $\Gamma$, i.e.
	\[
	\gamma(S_i) = S_{\gamma(i)}
	\]
	for each $\gamma \in \Gamma$ and $i \in V$.
	
	If $\Gamma$ acts transitively on the vertices of $G$, then there exists an optimal solution of Linear Problem~\ref{lp:isoperimetric} at scale $\mathcal{S}$ such that $\eta_i = \eta_j$ for each $i, j \in V$.
	
	If $\Gamma$ acts transitively on the edges of $G$, then there exists an optimal solution of Linear Problem~\ref{lp:isoperimetric} at scale $\mathcal{S}$ such that for each $ij \in E$, $\kappa_{ij} = \frac 1{|E|}$.
	
	If $\Gamma$ acts transitively on both the edges and the vertices of $G$, then there exists an optimal solution such that $\eta_i = \eta_j$ for each $i, j \in V$ and $\kappa_{ij} = \frac 1{|E|}$ for each $ij \in E$.
\end{theorem}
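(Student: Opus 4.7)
The plan is to exploit the convexity of the feasible region of Linear Problem~\ref{lp:isoperimetric} together with the $\Gamma$-equivariance of its constraints. First I would establish that $\Gamma$ acts on the set of admissible solutions. Given $\gamma \in \Gamma$ and an admissible $(\eta, \kappa)$, define $(\gamma \cdot \eta)_i = \eta_{\gamma^{-1}(i)}$ and $(\gamma \cdot \kappa)_{ij} = \kappa_{\gamma^{-1}(i)\gamma^{-1}(j)}$. Checking that this is again admissible is routine: the total capacity constraint is preserved because $\gamma$ is a bijection on $E$, non-negativity is preserved pointwise, and the weighted isoperimetric inequality at a subset $T \subset \overbar S_k$ is mapped to the corresponding inequality at $\gamma(T) \subset \overbar S_{\gamma(k)}$ (this uses the invariance hypothesis $\gamma(S_i) = S_{\gamma(i)}$, which one verifies implies $\gamma(\overbar S_k) = \overbar S_{\gamma(k)}$). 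The objective $\sum_{i \in V} \eta_i$ is $\Gamma$-invariant since $\gamma$ merely permutes the summands.

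Next I would observe that, since $G$ is finite, the action of $\Gamma$ factors through the finite group $\mathrm{Aut}(G)$, so without loss of generality we may assume $\Gamma$ is finite. Starting from any optimal solution $(\eta^\star, \kappa^\star)$, form the orbit average
\[
\bar\eta = \frac{1}{|\Gamma|}\sum_{\gamma \in \Gamma} \gamma \cdot \eta^\star, \qquad \bar\kappa = \frac{1}{|\Gamma|}\sum_{\gamma \in \Gamma} \gamma \cdot \kappa^\star.
\]
Because the feasible region is convex and each summand is admissible with the same optimal objective, $(\bar\eta, \bar\kappa)$ is itself admissible and optimal. By construction, $\bar\eta$ and $\bar\kappa$ are $\Gamma$-invariant, hence constant on $\Gamma$-orbits. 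Under the transitivity hypothesis on vertices this forces $\bar\eta_i = \bar\eta_j$ for all $i, j$; under the transitivity hypothesis on edges it forces $\bar\kappa_{ij}$ to be constant on $E$.

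To finish the edge case with the precise value $\bar\kappa_{ij} = 1/|E|$, I would first note that we may assume $\sum_{ij \in E} \kappa^\star_{ij} = 1$ at optimality: if $\sum \kappa^\star_{ij} < 1$, scaling $(\eta^\star, \kappa^\star)$ by a factor larger than $1$ only relaxes every weighted isoperimetric inequality (since both sides scale linearly) and strictly increases the objective, contradicting optimality. Averaging preserves $\sum \bar\kappa_{ij} = 1$, so transitivity on edges together with constancy of $\bar\kappa$ yields $\bar\kappa_{ij} = 1/|E|$. The case where $\Gamma$ acts transitively on both vertices and edges follows by simultaneously applying both conclusions to the same averaged solution.

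The main subtlety, and the step I expect to need the most care, is verifying that the weighted isoperimetric constraints are genuinely $\Gamma$-equivariant: one must check that $\gamma$ sends each admissible test set $T \subset \overbar S_k$ to an admissible test set $\gamma(T) \subset \overbar S_{\gamma(k)}$ and that the edge boundary transforms correctly, $\gamma(\partial T) = \partial \gamma(T)$. Once this bookkeeping is in place, the rest of the argument is simply convex averaging.
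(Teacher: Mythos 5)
Your proposal is correct and follows essentially the same route as the paper: define the $\Gamma$-action on admissible solutions, check equivariance of the capacity and weighted isoperimetric constraints (using $\gamma(S_i)=S_{\gamma(i)}$ and $\gamma(\partial T)=\partial\gamma(T)$), average over the orbit using convexity of the optimal set, and invoke transitivity to force constancy. Your justification of the normalization $\sum_{ij\in E}\kappa_{ij}=1$ is in fact slightly more explicit than the paper's, which simply asserts it without loss of generality.
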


By Theorem~\ref{thm:lift and project}, we can replace Linear Problem~\ref{lp:isoperimetric} by Linear Problem~\ref{lp:pseudo-flows} in the statement of Theorem~\ref{thm:averaging}.

\begin{proof}
 
  Let $K$ be a set of admissible solutions of Linear Problem~\ref{lp:isoperimetric} (see Section~\ref{sec:lift}).
  Let $(\eta, \kappa) \in K$ be an optimal solution.
  Without any loss of generality we may assume that $\sum_{ij \in E} \kappa_{ij} = 1$.
  For $\gamma \in \Gamma$, let
  \[
    (\gamma.\eta, \gamma.\kappa) = (\eta \circ \gamma^{-1}, \kappa \circ (\gamma^{-1} \times \gamma^{-1}))
  \]
  be a solution $(\eta, \gamma)$ translated by the action of $\gamma$ on $G$. Since $\gamma \times \gamma$ is a bijection on $E$, $\gamma.\kappa$ is well defined and
  \[
    \sum_{ij \in E} (\gamma.\kappa)_{ij} = \sum_{ij \in E} \kappa_{\gamma^{-1}(i)\gamma^{-1}(j)} = \sum_{ij \in E} \kappa_{ij} = 1.
  \]
  As we observed above, we do not require the orientation of the edges 
  to be preserved. In the case that $\gamma$ inverts the edge $ij$, we give 
  $\kappa_{ji}$ same value as 
  $\kappa_{ij}$. 
  Since $\gamma$ is a bijection on $V$, we have
  \[
    \sum_{i \in V} (\gamma.\eta)_i = \sum_{i \in V} \eta_{\gamma^{-1}(i)} = \sum_{i \in V} \eta_i.
  \]
  Let $T \subset S_j \in \mathcal{S}$. Let $k = \gamma^{-1}(j)$. We have
  \[
    \sum_{i \in T} (\gamma.\eta)_i = \sum_{i \in T} \eta_{\gamma^{-1}(i)}
    = \sum_{i \in \gamma^{-1}(T)} \eta_i \leq
  \]
  \[
    \leq \sum_{im \in \partial \gamma^{-1}(T)} \kappa_{im} =
    \sum_{im \in \gamma^{-1}(\partial T)} \kappa_{im} =
    \sum_{im \in \partial T} \kappa_{\gamma^{-1}(im)} =
    \sum_{im \in \partial T} (\gamma.\kappa)_{im}.
  \]
  The inequality follows from the weighted isoperimetric inequality of the problem and the fact that $\gamma^{-1}(T) \subset \gamma^{-1}(S_j) = S_k$.
  Therefore $(\gamma.\eta, \gamma.\kappa) \in K$ and if $(\eta, \kappa)$ is an optimal solution, then so is $(\gamma.\eta, \gamma.\kappa)$.
  Let
  \[
    (\overline{\eta}, \overline{\kappa}) = \frac 1{|\Gamma|} \sum_{\gamma \in \Gamma} (\gamma.\eta, \gamma.\kappa).
  \]
  By the convexity of the set of optimal solutions, $(\overline{\eta}, \overline{\kappa})$ is an optimal solution.
  
  Let $\Gamma.i$ denote the orbit of $i \in V$ under the action of $\Gamma$. If $j, k \in \Gamma.i$, then
  \[
    \overline{\eta}_j = \frac 1{|\Gamma.i|} \sum_{l \in \Gamma.i}  \eta_l = \overline{\eta}_k,
  \]
  because stabilizers of all vertices in $\Gamma.i$ under the action of $\Gamma$ are of equal size.
  Hence $\overline{\eta}$ is constant on orbits of the action of $\Gamma$ and if the action is transitive, then all values of $\overline{\eta}$ are equal.
  
  Similarly, $\overline{\kappa}$ is constant on orbits of the action of $\Gamma$ and if the action is transitive, then all values of $\overline{\kappa}$ are equal. Since $\sum_{ij \in E} \overline{\kappa}_{ij} = 1$, in such case we have $\overline{\kappa}_{ij} = \frac 1{|E|}$.
\end{proof}

\subsection{The circular ladder graph \texorpdfstring{$\mathbb{Z}_2 \times \mathbb{Z}_7$}{}.}

The symmetry of a graph can be exploited even if the action of the isometry group is not transitive. 
We will show this for the Cayley graph of the group $\mathbb{Z}_2 \times \mathbb{Z}_7$ with the standard generating set $\{ (1, 0), (0, 1) \}$.

An optimal solution at scale $S = 1$ of Linear Problem~\ref{lp:isoperimetric} is shown below. The label on edge $ij$ is its capacity $\kappa_{ij}$; the label on vertex $i$ is its demand $\eta_i$.

\begin{center}
  \includegraphics[width=0.49\textwidth]{figures/avg/graph.tikz}
  \includegraphics[width=0.49\textwidth]{figures/avg/capacity.tikz}
\end{center}

The objective value is $8 \cdot \frac 18 = 1$, which is a basic solution found by the simplex method, i.e. a vertex of the polyhedron of optimal solutions. 
Hence it lacks symmetry.
Using the symmetry of the graph we can find a simpler solution by hand.

The action of the isometry group is transitive on vertices, so we may put $\eta_i = \eta$ for each $i \in V$.
The action of the isometry group on edges has two orbits. 
Hence we may find a solution of Linear Problem~\ref{lp:isoperimetric} with capacity (say) $\kappa$ on edges corresponding to the $(1, 0)$ generator and capacity (say) $\lambda$ on edges corresponding to the $(0, 1)$ generator.

\begin{center}
  {\tiny
  \includegraphics[width=0.49\textwidth]{figures/avg/uniform_solution_symbolic.tikz}}
\end{center}

The capacity constraint is
\[
  14\lambda + 7\kappa \leq 1
\]
and without any loss of generality we may put $\kappa = \frac 17 - 2\lambda$.
The weighted isoperimetric inequalites are
\[
  {
  \renewcommand*{\arraystretch}{1.2}
  \begin{array}{rll}
  \eta & \leq 2\lambda + \kappa & = \frac 17 \\
  2\eta & \leq 2\lambda + 2\kappa & = \frac 27 - 2\lambda \\
  2\eta & \leq 4\lambda & = 4\lambda \\
  3\eta & \leq 4\lambda + \kappa & = \frac 17 + 2\lambda \\
  4\eta & \leq 4\lambda + 2\kappa & = \frac 27.
  \end{array}
  }
\]
The inequalities correspond to different connected subsets of a ball of radius $1$ in the graph and we omit duplicates.
For $\lambda = \frac 2{35}$ we have $\frac 17 - \lambda = \frac 1{21} + \frac 23\lambda \leq 2\lambda$. This gives an optimal bound $\eta = \frac 1{14}$ and  $\kappa = \frac 1{35}$. The solution is the following.
\begin{center}
  {\tiny
  \includegraphics[width=0.6\textwidth]{figures/avg/uniform_solution.tikz}}
\end{center}
Again, the objective value is $14 \cdot \frac 1{14} = 1$.

\section{Cheeger constant}

The capacity $\kappa$ and supply $\eta$ are implicit in the solution of Optimization Problem~\ref{op:pseudo-flows}. If pseudo-flows $\varphi_k$ are chosen, then the optimal values of $\kappa$ and $\eta$ can be easily determined.
However a natural way to solve the problem is first to fix $\kappa$
  and~$\eta$ and then look for an optimal pseudo-flow family, as we did in many of the examples above.
Choosing appropriate values for $\kappa$ and $\eta$ makes the problem hard to crack.
To make it easier we can place additional constraints on Linear Problem~\ref{lp:pseudo-flows}.

The main idea that is inspired by Theorem~\ref{thm:averaging} is to set all capacities to $\kappa_{ij} = \frac 1{|E|}$ and all supplies to $\eta_i = \eta$.
We add these extra constraints to
Linear Problem~\ref{lp:pseudo-flows} to get Linear Problem~\ref{lp:uniform}.

\begin{lp}{Uniform pseudo-flows at the dual scale $\mathcal{\overbar S}$}\label{lp:uniform}
	\begin{maxi*}[1]
		{}{|V|\cdot\eta \hspace{8cm}}{}{}
		\addConstraint{\varphi_{ij, k}}{\leq \frac 1{|E|}}{\text{ for each } ij \in E, k \in V}
		\addConstraint{-\varphi_{ij, k}}{\leq \frac 1{|E|}}{\text{ for each } ij \in E, k \in V}
		\addConstraint{
			\sum_{j \in V, ji \in E} \varphi_{ji, k} -
			\sum_{j \in V, ij \in E} \varphi_{ij, k}
		}{ \geq \eta
		}{\text{ for each } k \in V, i \in \overbar{S}_k}
		\addConstraint{\eta, \varphi_{ij, k}}{\in \mathbb{R}}{{} }
	\end{maxi*}
\end{lp}

The present section is devoted to the study of Linear Problem~\ref{lp:uniform}.
We will show how it is related to the Cheeger constant of the graph.
We also study the dual problem to Linear Problem~\ref{lp:uniform}, which is a relaxation of Linear Problem~\ref{lp:measures}.
As a result, we find that the expanding property, which is used to show that a Cayley graph of a finitely generated group doesn't have property~A, contradicts a property that is much weaker than property~A.

\subsection{Isoperimetric number and Cheeger constant}
\label{sec:cheeger}

The Cheeger constant is a graph invariant defined as
\[
  \gamma(G) = \min \left\{ \frac{|\partial T|}{|T|} \colon T \subset V, T \neq \emptyset, |T| \leq \frac{|V|}2 \right\}.
\]
It is of great importance in many areas of mathematics~\cite{avr2009}.
Below we define its variant \df{at scale $\mathcal{S}$}, which is relevant to the contents of this paper.

\begin{definition}\label{def:cheeger at scale S}
    Let $\mathcal{S}$ be a scale on a graph $G$.
	We let
	\[
	\gamma(G, \mathcal{S}) = \min \left\{ \frac{|\partial T|}{|T|} \colon T \subset S \in \mathcal{S}, T \neq \emptyset \right\}
	\]
	be the \df{Cheeger constant} of $G$ \df{at scale $\mathcal{S}$}.
    For $T \neq \emptyset$, $T\subset V$ we call
    \[
    \varphi(T) = \frac{|\partial T|}{|T|}
    \]
    the \df{isoperimetric number of $T$}.
\end{definition}

\begin{theorem}\label{thm:cheeger}
  The optimal solution of Linear Problem~\ref{lp:uniform} at scale $\mathcal{\overbar S}$ is equal to 
  \[
    \frac{|V|}{|E|} \gamma(G, \mathcal{\overbar S}),
  \]
  i.e. the Cheeger constant of $G$ at scale~$\mathcal{\overbar S}$ multiplied by $\frac{|V|}{|E|}$.
\end{theorem}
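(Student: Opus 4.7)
The objective of Linear Problem~\ref{lp:uniform} is $|V|\cdot\eta$, so it suffices to show that the maximal admissible value of $\eta$ equals $\frac{1}{|E|}\gamma(G,\mathcal{\overbar S})$. I will establish matching upper and lower bounds.

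For the upper bound, I would apply the standard interior-cancellation argument already used in the derivation of the weighted isoperimetric inequalities. Fix an admissible solution $(\eta,\varphi)$ of Linear Problem~\ref{lp:uniform}, pick any $k\in V$ and any nonempty $T\subset \overbar S_k$. Since $\varphi_{k,ij}$ appears with opposite signs in $\sigma_{k,i}$ and $\sigma_{k,j}$ whenever $ij\in E[T,T]$, summing the demand constraints over $i\in T$ gives
\[
|T|\cdot\eta \;\leq\; \sum_{i\in T}\sigma_{k,i} \;=\; \sum_{ij\in\partial T}\pm\varphi_{k,ij} \;\leq\; \sum_{ij\in\partial T}\tfrac{1}{|E|} \;=\; \tfrac{|\partial T|}{|E|}.
\]
Dividing by $|T|$ and minimizing the right-hand side over all such $T$ yields $\eta\leq\frac{1}{|E|}\gamma(G,\mathcal{\overbar S})$, so the optimum of Linear Problem~\ref{lp:uniform} is at most $\frac{|V|}{|E|}\gamma(G,\mathcal{\overbar S})$.

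For the lower bound, I would exhibit feasible flows achieving equality. Set $\eta_i = \eta := \frac{1}{|E|}\gamma(G,\mathcal{\overbar S})$ for every $i\in V$ and $\kappa_{ij}=\frac{1}{|E|}$ for every $ij\in E$. By the very definition of $\gamma(G,\mathcal{\overbar S})$, for every $T\subset \overbar S\in\mathcal{\overbar S}$ we have
\[
\sum_{i\in T}\eta_i \;=\; |T|\cdot\eta \;\leq\; \tfrac{|\partial T|}{|E|} \;=\; \sum_{ij\in\partial T}\kappa_{ij},
\]
so $(\eta,\kappa)$ lies in the polytope $K$ of admissible solutions of Linear Problem~\ref{lp:isoperimetric}. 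By the lift-and-project Theorem~\ref{thm:lift and project}, there exist flows $\varphi_k$ such that $(\eta,\kappa,\varphi)$ is admissible for Linear Problem~\ref{lp:pseudo-flows}; since $\eta$ is constant and $\kappa_{ij}=\frac{1}{|E|}$, this triple is in fact admissible for Linear Problem~\ref{lp:uniform}. Its objective value is $|V|\cdot\eta=\frac{|V|}{|E|}\gamma(G,\mathcal{\overbar S})$, completing the proof.

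The argument is essentially immediate once Theorem~\ref{thm:lift and project} is in hand; the only subtle point, and the place where the lift-and-project theorem does the real work, is constructing the pseudo-flows $\varphi_k$ that realize the uniform demand under uniform capacity. Without that result one would have to build such flows by hand, which (as the $3\times 3$ grid example in Section~\ref{ex:3x3} shows) can fail under uniform capacity for asymmetric graphs---but here the uniform capacity is combined with the \emph{same} uniform demand that the Cheeger constant supports, which is exactly what the isoperimetric inequalities guarantee feasible.
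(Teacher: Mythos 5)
Your proof is correct and follows essentially the same route as the paper: the paper likewise invokes Theorem~\ref{thm:lift and project} to replace the flows in Linear Problem~\ref{lp:uniform} by the weighted isoperimetric inequalities with $\kappa_{ij}=\frac{1}{|E|}$ and uniform $\eta$, and then reads off $\eta \leq \frac{1}{|E|}\gamma(G,\mathcal{\overbar S})$ with equality attained. You merely unfold the two directions of that equivalence explicitly (the elementary cancellation argument for the upper bound, the lift for the lower bound), which is a faithful expansion of the paper's one-line appeal to the projection.
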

\begin{proof}
  By Theorem~\ref{thm:lift and project}, we may project Linear Problem~\ref{lp:uniform} and replace the flows with weighted isoperimetric inequalities.
  \begin{maxi*}[1]{}{|V|\cdot\eta}{}{}
	\addConstraint{\sum_{i \in T} \eta}{\leq \sum_{ij \in \partial T} \frac 1{|E|}}{\text{ for each } T \subset \overline{S} \in \mathcal{\overline{S}}}
	\addConstraint{\eta}{\in \mathbb{R}}{{} }
  \end{maxi*}
  The optimal solutions of both problems are equal.
  The constraints of the projected problem are
  \[
  \eta \leq \frac {|\partial T|}{|T| |E|}
  \]
  for each $T \subset \overbar{S} \in \mathcal{\overbar{S}}$.
  The maximal value of $\eta$ is $\frac{\gamma(G, \mathcal{\overbar{S}})}{|E|}$
    so the maximal objective is indeed $\frac{|V|}{|E|} \gamma(G, \mathcal{\overbar{S}})$.
\end{proof}

The rest of the section is devoted to the study of Linear Problem~\ref{lp:uniform} in more depth.
We will give an alternate proof of Theorem~\ref{thm:cheeger} as well as an interpretation of the dual problem to Linear Problem~\ref{lp:uniform}.

\subsection{A single flow}

To simplify, we rescale the flows by $|E|$ so that the capacity of each edge is $1$ and we rescale the objective function by $\frac 1{|V|}$ to $\eta$, instead of $|V| \cdot \eta$.
The main reduction is to fix $k$ and solve the problem for a single column, i.e. for a single set $S = S_k$.
The reader is encouraged to use the supply table on page~\pageref{supply table} as a reference.
In the general case we compute the demand $\eta_i$ to be the minimum of the supplies in the $i$th row of the table.
Since we replaced each $\eta_i$ with a single value $\eta$, our goal is to maximize the minimal supply in the entire table.
Hence we may as well construct each flow $\varphi_k$ in such a way that the minimal value of the supply in the $k$th column is maximal.
This splits problem into independent problems that may be solved in each column separately.

\begin{lp}{Minimal isoperimetric number over $S$}\label{lp:isoperimetric single column}
	\begin{maxi*}[1]
		{}{\eta \hspace{8cm}}{}{}
		\addConstraint{\varphi_{ij}}{\leq 1}{\text{ for each } ij \in E}
		\addConstraint{-\varphi_{ij}}{\leq 1}{\text{ for each } ij \in E}
		\addConstraint{
			\sum_{j \in V, ji \in E} \varphi_{ji} -
			\sum_{j \in V, ij \in E} \varphi_{ij}
		}{ \geq \eta
		}{\text{ for each } i \in S}
		\addConstraint{\eta, \varphi_{ij}}{\in \mathbb{R}}{{} }
	\end{maxi*}
\end{lp}

Once again, to understand the problem we pass to its dual.

\begin{lp}{The dual to minimal isoperimetric number over $S$}\label{lp:isoperimetric single column dual}
	\begin{mini*}[1]
		{}{\sum_{ij \in E} |a_i - a_j| \hspace{4cm}}{}{}
		\addConstraint{\sum_{i \in S} a_i}{= 1}{}
		\addConstraint{a_i}{= 0}{\text{ for each } i \in V \setminus S}
		\addConstraint{a_i}{\geq 0}{\text { for each } i \in S}
	\end{mini*}
\end{lp}
\begin{theorem}
  Linear Problem~\ref{lp:isoperimetric single column dual} is dual to Linear Problem~\ref{lp:isoperimetric single column}.
\end{theorem}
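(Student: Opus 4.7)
The plan is to derive the dual by exactly the same Lagrangian/combination method used in Theorem~\ref{thm:duality} (following Lahaie's recipe). I would introduce nonnegative multipliers $a_i$ for $i\in S$ (one per demand constraint) and $b^+_{ij}, b^-_{ij}$ for each $ij\in E$ (one per capacity bound), then form a weighted sum of the constraints and add it to the primal objective $\eta$, obtaining a chain of inequalities that upper-bounds $\eta$.

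The key algebraic step is to rearrange this sum and collect coefficients of the primal variables. The coefficient of $\eta$ will be $1-\sum_{i\in S}a_i$, forcing $\sum_{i\in S}a_i=1$ in the dual. The coefficient of each $\varphi_{uv}$ will be $-b^+_{uv}+b^-_{uv}+[v\in S]\,a_v-[u\in S]\,a_u$; since the $\varphi$'s are free variables, this must vanish, giving the equality $b^+_{uv}-b^-_{uv}=[v\in S]\,a_v-[u\in S]\,a_u$. Minimizing $\sum_{uv\in E}(b^+_{uv}+b^-_{uv})$ subject to this fixed difference (with $b^\pm\ge 0$) is achieved by setting one of the pair to zero, yielding the per-edge cost $|[v\in S]\,a_v-[u\in S]\,a_u|$.

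The final step is cosmetic: extend $a$ to all of $V$ by setting $a_i=0$ for $i\in V\setminus S$. The indicators then disappear, so the per-edge cost becomes $|a_u-a_v|$, and the sign constraint $a_i\ge 0$ on $S$ together with $a_i=0$ on $V\setminus S$ recovers exactly the constraints of Linear Problem~\ref{lp:isoperimetric single column dual}.

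I expect the main obstacle to be purely notational: keeping the signs straight when translating the incoming- and outgoing-edge sums in the demand constraint into dual constraints on $b^\pm$, since one has to correctly track which endpoint of an edge contributes which sign under the orientation. The absolute-value reduction $b^+_{uv}+b^-_{uv}\rightsquigarrow |a_v-a_u|$ is the same trick already used at the end of the proof of Theorem~\ref{thm:duality}, and extending $a$ by zero outside $S$ is precisely what produces the symmetric form $|a_i-a_j|$ seen in the statement.
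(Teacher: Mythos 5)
Your proposal is correct and follows essentially the same route as the paper: the same Lagrangian combination with multipliers $a_i$ for the demand constraints and $b^{\pm}_{ij}$ for the capacity bounds, the same vanishing-coefficient condition on the free variables $\varphi_{ij}$ forcing $b_{ij}=\delta_{j\in S}a_j-\delta_{i\in S}a_i$, the same reduction of $b^+_{ij}+b^-_{ij}$ to $|b_{ij}|$, and the same extension of $a$ by zero outside $S$. No gaps.
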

\begin{proof}
  We derive the dual problem once again using the method from~\cite{lahaie2015} (with some shortcuts).
\[
  \eta \leq \eta + 
    \sum_{i \in S} a_i ( \sum_{ji \in E} \varepsilon_{ji} - \sum_{ij \in E} \varepsilon_{ij} - \eta ) + 
    \sum_{ij \in E} b^+_{ij} (1 - \varphi_{ij}) +
    \sum_{ij \in E} b^-_{ij} (1 + \varphi_{ij}) =
\]
\[
  \eta(1 - \sum_{i \in S} a_i) +
  \sum_{ij \in E} \varphi_{ij} (\delta_{j \in S} a_j - \delta_{i \in S} a_i - b_{ij}^+ + b_{ij}^-) +
  \sum_{ij \in E} b^+_{ij} + b^-_{ij}
\]
The inequality is true under the assumptions that
\[
  a_i \geq 0 \text{ and } b^+_{ij}, b^-_{ij} \geq 0.
\]
Without any loss of generality we may assume that $b^+_{ij} \cdot b^-_{ij} = 0$.
Then we have
\[
  b_{ij} = b^+_{ij} - b^-_{ij} \text{ and } |b_{ij}| = b^+_{ij} + b^-_{ij}
\]
and we have a bound on the right hand side 
\[
  \eta(1 - \sum_{i \in S} a_i) + \sum_{ij \in E} \varphi_{ij} (\delta_{j \in S} a_j - \delta_{i \in S} a_i  - b_{ij}) +
  \sum_{ij \in E} |b_{ij}| \leq \sum_{ij \in E} |b_{ij}|
\]
under the constraints
\[
  \sum_{i \in S} a_i = 1
\]
\[
  b_{ij} = \delta_{j \in S} a_j - \delta_{i \in S} a_i.
\]
Therefore the dual problem may be rewritten as Linear Problem~\ref{lp:isoperimetric single column dual}.
\end{proof}

To finish our argument we show that there always exists a simple optimal solution of Linear Problem~\ref{lp:isoperimetric single column dual}.

\begin{theorem}
  There exists an optimal solution of Linear Problem~\ref{lp:isoperimetric single column dual} with all non-zero values equal.
\end{theorem}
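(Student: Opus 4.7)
The plan is to use a discrete coarea (or layer cake) decomposition of an admissible solution $a$ into a convex combination of scaled indicator functions of super-level sets, and then extract one of these level sets by a weighted averaging argument.

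First I would start with an arbitrary admissible $a = (a_i)_{i \in V}$ (in particular an optimal one). Let $0 = t_0 < t_1 < \cdots < t_N$ be the distinct values taken by $a$, and set $T_k = \{ i \in V : a_i \geq t_k \}$ for $k = 1, \ldots, N$. Since $a$ vanishes on $V \setminus S$, every $T_k$ is contained in $S$, so each $T_k$ is admissible as the support of a candidate scaled-indicator solution. The telescoping identity $a_i = \sum_{k=1}^{N} (t_k - t_{k-1}) \mathbb{1}_{T_k}(i)$ gives at once
\[
\sum_{i \in V} a_i = \sum_{k=1}^{N} (t_k - t_{k-1}) \, |T_k|.
\]

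Next I would establish the key coarea identity
\[
\sum_{ij \in E} |a_i - a_j| = \sum_{k=1}^{N} (t_k - t_{k-1}) \, |\partial T_k|.
\]
For a fixed edge $ij$, after swapping $i$ and $j$ if necessary so that $a_i \geq a_j$, the telescoping yields $a_i - a_j = \sum_{k=1}^N (t_k - t_{k-1}) (\mathbb{1}_{T_k}(i) - \mathbb{1}_{T_k}(j))$, and the nestedness of the $T_k$ makes every summand nonnegative, contributing $1$ precisely when $ij \in \partial T_k$. Summing over $ij \in E$ and swapping the order of summation gives the formula.

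Combining with the normalization $\sum_{i \in S} a_i = 1$, I would write, with weights $\lambda_k = t_k - t_{k-1} > 0$,
\[
\sum_{ij \in E} |a_i - a_j|
= \frac{\sum_{k=1}^{N} \lambda_k |\partial T_k|}{\sum_{k=1}^{N} \lambda_k |T_k|}
= \frac{\sum_{k=1}^{N} (\lambda_k |T_k|) \cdot \tfrac{|\partial T_k|}{|T_k|}}{\sum_{k=1}^{N} \lambda_k |T_k|}.
\]
The right-hand side is a convex combination of the isoperimetric ratios $|\partial T_k|/|T_k|$, so some index $k^*$ achieves $|\partial T_{k^*}|/|T_{k^*}| \leq \sum_{ij \in E} |a_i - a_j|$. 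Finally, setting $a'_i = \mathbb{1}_{T_{k^*}}(i)/|T_{k^*}|$ produces an admissible solution (nonnegative, supported in $T_{k^*} \subset S$, summing to $1$) with all nonzero values equal to $1/|T_{k^*}|$ and objective value $|\partial T_{k^*}|/|T_{k^*}|$, which is no larger than the objective of $a$. Starting from an optimal $a$, the resulting $a'$ is also optimal and has the required form.

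The only potentially delicate step is verifying the coarea identity with the correct sign for each edge; this is routine once one notes that the super-level sets are nested, so I do not expect a real obstacle.
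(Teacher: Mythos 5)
Your proof is correct, but it takes a genuinely different route from the paper's. The paper fixes an optimal solution with the maximal number of zeroes, looks at the sets $U$ and $L$ where the maximum and the minimal positive value are attained, and perturbs by $\pm\delta$ on these sets; piecewise linearity of the objective plus optimality forces the objective to be constant along the perturbation, so one can push until an extra zero appears, contradicting maximality unless all nonzero values already coincide. You instead use the discrete coarea (layer-cake) decomposition $a = \sum_k (t_k - t_{k-1})\mathbb{1}_{T_k}$ over nested super-level sets, verify $\sum_{ij\in E}|a_i-a_j| = \sum_k (t_k-t_{k-1})|\partial T_k|$ and $\sum_i a_i = \sum_k (t_k-t_{k-1})|T_k|$, and read off the objective as a convex combination of the ratios $|\partial T_k|/|T_k|$, so some single super-level set does at least as well. (Your phrase ``the distinct values taken by $a$'' should be read as prepending $t_0=0$ even when $0$ is not attained; this is cosmetic.) Your argument is arguably more informative: it shows directly that \emph{every} admissible $a$ is dominated by a normalized indicator $\mathbb{1}_{T}/|T|$ with $T\subset S$, which immediately yields the identification of the optimal value with $\min_{T\subset S}\varphi(T)$ that the paper only establishes in a separate step after the theorem. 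The paper's perturbation argument, on the other hand, is self-contained within the polyhedral/optimality framework used throughout and does not require setting up the coarea identity. Both are complete proofs.
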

\begin{proof}
  Let 
  \[
   a = (a_i)_{i \in V} \in \mathbb{R}^{V}
  \]
  be an admissible solution of linear problem~\ref{lp:isoperimetric single column dual}.
  We will show that if $a$ is an optimal solution with maximal number of zeroes, then all non-zero values of $a$ are equal.

  Let
  \[
    \max a = \max \left\{ a_i \colon i \in S \right\}, \quad 
    \minp a = \min \left\{ a_i \colon i \in S, a_i > 0 \right \}
  \]
  and
  \[
    U = \left\{ i \in S \colon a_i = \max a \right\}, \quad
    L = \left\{ i \in S \colon a_i = \minp a \right\}.
  \]
  
  Both $U$ and $L$ are non-empty. If $\max a = \minp a$ (i.e. $U = L$), then we are done.
  Assume that $\max a > \minp a$.
  
  Let $a'(\delta) \in \mathbb{R}^V$ be defined by the following formula.
  \[
    a'_i(\delta) = \left\{
    \begin{array}{ll}
      a_i + \frac \delta{|U|} & \text{ for } i \in U \\
      a_i - \frac \delta{|L|} & \text{ for } i \in L \\
      a_i & \text{ otherwise }
    \end{array}
    \right.
  \]

  Since $S$ is finite, there exists $\Delta > 0$ such that the value of
  \[
    \sum_{ij \in E} |a'_i - a'_j| - \sum_{ij \in E} |a_i - a_j|
  \]
  is linear for $\delta \in [-\Delta, |L| \minp a]$.
  Since $a$ is an optimal solution, the value has to be constant.
  But then, $a'(|L| \minp a)$ is an optimal solution with a larger number of zeroes.
  
  Therefore if $a$ is an optimal solution with maximal number of zeroes, then all non-zero values of $a$ are equal to $\frac 1{|\supp a|}$.
\end{proof}

Let $a$ be an optimal solution of Linear Problem~\ref{lp:isoperimetric single column dual} with all non-zero values equal.
Then $\sum_{ij \in E} |a_i - a_j|$ is equal to the isoperimetric number of $\supp a \subset S$.
Therefore 
\[
\text{ optimal value } \geq \min_{T \subset S} \varphi(T).
\]

On the other hand let $T \subset S$ with minimal $\varphi(T)$. Let
\[
a_i = \frac 1{|T|} \text{ for } i \in T \text{ and } a_i = 0 \text{ otherwise.}
\]
We have
\[
\text{ optimal value } \leq \sum_{ij \in E} |a_i - a_j| = \frac 1{|T|} \cdot | \partial T | = \varphi(T) = \min_{T \subset S} \varphi(T).
\]
Therefore Linear Problem~\ref{lp:isoperimetric single column dual} is equivalent to Optimization Problem~\ref{op:isoperimetric number}.

\begin{op}{Minimal isoperimetric number over $S$}\label{op:isoperimetric number}
	Maximize $\eta$ such that for each $T \subset S$ we have
	\[
	\eta \leq \frac{|\partial T|}{|T|}.
	\]
\end{op}

The optimal solution of Linear Problem~\ref{lp:uniform} is equal to the minimum over $S_k$ of the solutions of Linear Problems~\ref{lp:isoperimetric single column} multiplied by $\frac{|V|}{|E|}$.
Hence Linear Problem~\ref{lp:uniform} has optimal solution equal to $\frac{|V|}{|E|} \gamma(G, \mathcal{S})$, as we have already proven in Theorem~\ref{thm:cheeger}.

\subsection{Mean property A}

It is very interesting to look at the dual problem to Linear Problem~\ref{lp:uniform}.
Since Linear Problem~\ref{lp:uniform} is the dual problem to a linearization of property A with some extra added constraints, its dual will be a relaxation of a linearization of property A.
We call this relaxation \df{mean property~A}. 

\begin{lp}{Mean property A at scale $\mathcal{S}$}\label{lp:mean}
\begin{mini}[1]
{}{\sum_{ij, \in E, k \in V} c_{ij, k} \hspace{4cm}}{}{}
\addConstraint{n_{i,j}}{=0}{\text{ for each } j \in V \setminus S_i}
\addConstraint{\sum_{i \in V, j \in S_i} n_{i,j}}{= |V|}{}
\addConstraint{n_{j,k} - n_{i,k}}{\leq c_{ij,k}}{\text{ for each } ij \in E, k \in V}
\addConstraint{c_{ij, k}, n_{i,j}}{\geq 0}{{} }
\end{mini}
\end{lp}

\begin{lemma}
  Linear Problem~\ref{lp:mean} is dual to Linear Problem~\ref{lp:uniform}.
\end{lemma}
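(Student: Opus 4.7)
The plan is to apply the standard linear-programming duality recipe to Linear Problem~\ref{lp:uniform}, following the same procedure used in the proof of Theorem~\ref{thm:duality}. First I would form the Lagrangian by introducing non-negative multipliers $c^+_{ij,k}$ and $c^-_{ij,k}$ for the two capacity constraints $\pm\varphi_{ij,k} \leq \tfrac{1}{|E|}$, together with non-negative multipliers $\mu_{k,i}$ for the demand constraints $\sigma_{k,i}(\varphi) \geq \eta$, one for each pair $(k, i)$ with $k \in V$ and $i \in \overline{S}_k$ (and extended by $0$ outside that range).

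Next I would group the resulting upper bound on $|V|\cdot\eta$ by primal variables and require the coefficient of each free variable to vanish. Since $\eta$ is a free scalar, this forces the global normalization $\sum_{k \in V,\, i \in \overline{S}_k} \mu_{k,i} = |V|$. Since each $\varphi_{ij,k}$ is free, and since $\varphi_{ij,k}$ appears with coefficient $+1$ in $\sigma_{k,j}$ and coefficient $-1$ in $\sigma_{k,i}$, its stationarity condition reads $c^+_{ij,k} - c^-_{ij,k} = \mu_{k,j}[j \in \overline{S}_k] - \mu_{k,i}[i \in \overline{S}_k]$. At any optimal dual solution we may take $c^+_{ij,k} c^-_{ij,k} = 0$, so the pair $(c^+_{ij,k}, c^-_{ij,k})$ collapses into a single non-negative variable $c_{ij,k}$ subject to the signed inequality $c_{ij,k} \geq c^+_{ij,k} - c^-_{ij,k}$.

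Finally, to match Linear Problem~\ref{lp:mean}, I would reindex via the dual-scale identity $i \in \overline{S}_k \iff k \in S_i$ by setting $n_{i,k} := \mu_{k,i}$. The support condition $n_{i,k} = 0$ for $k \notin S_i$, the normalization $\sum_{i \in V,\, k \in S_i} n_{i,k} = |V|$, and the variation bound $c_{ij,k} \geq n_{j,k} - n_{i,k}$ then read off directly from the Lagrangian. The main bookkeeping obstacle is precisely this index swap between the demand multipliers $\mu_{k,i}$---naturally indexed by ``which ball $\overline{S}_k$ we sit inside''---and the dual variables $n_{i,k}$ of Linear Problem~\ref{lp:mean}---indexed by ``which vertex $i$ has $k$ in its support''. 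The dual-scale identity is what makes this identification clean; once it is in place, the rest reduces to routine sign and complementary-slackness tracking of the sort already carried out in the proofs of Theorem~\ref{thm:duality} and Lemma~\ref{lem:isoperimetric dual}.
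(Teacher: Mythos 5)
Your proposal is correct and follows essentially the same route as the paper: dualize the two capacity constraints with non-negative multipliers $c^{\pm}_{ij,k}$ and the demand constraints with multipliers supported on $i \in \overline{S}_k$ (equivalently $k \in S_i$), use the freeness of $\eta$ and of the $\varphi_{ij,k}$ to extract the normalization $\sum n_{i,j} = |V|$ and the relation between $c^{\pm}$ and the differences $n_{j,k} - n_{i,k}$, and then collapse $c^{+}, c^{-}$ into a single variable. The only cosmetic difference is that you introduce the intermediate multipliers $\mu_{k,i}$ before the dual-scale reindexing, which the paper performs implicitly by indexing the multipliers as $n_{i,j}$ with $j \in S_i$ from the start.
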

\begin{proof}
  We derive the dual problem once again using the method from~\cite{lahaie2015} (with some shortcuts).
\[
  |V|\cdot\eta \leq |V|\cdot\eta + 
    \sum_{ij \in E, k \in V} (\frac 1{|E|} - \varphi_{ij,k})c^+_{ij,k}
    \sum_{ij \in E, k \in V} (\frac 1{|E|} + \varphi_{ij,k})c^-_{ij,k} +
\]
\[
  + \sum_{i \in V, j \in S_i} (\sum_{k \in V, ki\in E} \varphi_{ki,j} - \sum_{k \in V, ik \in E} \varphi_{ik,j} - \eta) n_{i,j} = \frac 1{|E|} \sum_{ij \in E, k \in V} (c^+_{ij,k} + c^-_{ij,k}) +
\]
\[
+ \eta (|V| - \sum_{i \in V, j \in S_i} n_{i,j}) + \sum_{ij \in E, k \in V} \varphi_{ij,k}
(-c^+_{ij,k} + c^-_{ij,k} + \delta_{k \in S_j} n_{j,k} - \delta_{k \in S_i} n_{i,k}) \leq
\]
\[
\leq \frac 1{|E|} \sum_{ij \in E, k \in V} (c^+_{ij,k} + c^-_{ij,k}).
\]

We substitute
\[
c_{ij,k} = c^+_{ij,k} - c^-_{ij,k},
\]
\[
|c_{ij,k}| = c^+_{ij,k} + c^-_{ij,k}
\]
and set
\[
  n_{i,k} = 0
\]
for $k \in V \setminus S_i$.

This gives us a problem equivalent to the dual problem.
\begin{mini}[1]
{}{\sum_{ij, \in E, k \in V} |c_{ij, k}| \hspace{4cm}}{}{\label{lp:dual equal capacities}}
\addConstraint{n_{i,j}}{=0}{\text{ for each } j \in V \setminus S_i}
\addConstraint{\sum_{i \in V, j \in S_i} n_{i,j}}{\geq |V|}{}
\addConstraint{n_{j,k} - n_{i,k}}{\leq c_{ij,k}}{\text{ for each } ij \in E, k \in V}
\addConstraint{c_{ij, k} \in \mathbb{R}, n_{i,j}}{\geq 0}{{} }
\end{mini}

Observe that we can always find an optimal solution with each $c_{ij, k} \geq 0$ and $\sum_{i,j \in V} n_{i,j} = |V|$. Therefore this problem is equivalent to Linear Problem~\ref{lp:mean}.  
\end{proof}

\begin{theorem}\label{thm:mean A}
Take a solution of Linear Problem~\ref{lp:mean} and let
\[
  \varphi_i(j) = n_{i,j}, \quad \varepsilon = \frac 1{|E||V|} \sum_{ij \in E, k \in V} c_{ij,k}.
\]
Then the following conditions are satisfied.
\begin{enumerate}
    \item $\{ \varphi_i \}$ has norm $1$ \emph{on average}, i.e.
    \[
      \frac 1{|V|} \| \sum_{i \in V} \varphi_i \|_1 = 1.
    \]
    \item $\{ \varphi_i \}$ has $\varepsilon$-variation \emph{on average}, i.e.
    \[
      \frac 1{|E|} \sum_{ij \in E} \sum_{k \in V} | \varphi_i(k) - \varphi_j(k)| = \frac 1{|E|} \sum_{ij \in E} \| \varphi_i - \varphi_j \|_1 \leq \varepsilon.
    \]
    \item $\supp \varphi_i \subset S_i$ for each $i \in V$.
\end{enumerate}
\end{theorem}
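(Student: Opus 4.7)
My plan is to verify all three conclusions directly from the constraints of Linear Problem~\ref{lp:mean}, with no auxiliary constructions: the theorem is essentially a dictionary that rewrites an admissible solution $(n, c)$ in terms of the mean-property-A data $(\varphi_i, \varepsilon)$.

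I would first dispatch (3), which is immediate: the constraint $n_{i,j}=0$ for $j\in V\setminus S_i$ says exactly that $\varphi_i$ vanishes outside $S_i$. Next I would verify (1). Since $n_{i,j}\geq 0$, the pointwise sum $\sum_{i\in V}\varphi_i$ is nonnegative, so its $\ell^1$-norm collapses to an honest sum:
\[
\Big\|\sum_{i\in V}\varphi_i\Big\|_1 \;=\; \sum_{k\in V}\sum_{i\in V}\varphi_i(k) \;=\; \sum_{i\in V}\sum_{k\in S_i} n_{i,k} \;=\; |V|,
\]
by the equality constraint of the linear program, and dividing by $|V|$ gives the desired~$1$.

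For (2), I would combine the inequality $n_{j,k}-n_{i,k}\leq c_{ij,k}$ with its symmetric counterpart $n_{i,k}-n_{j,k}\leq c_{ij,k}$ to get $|n_{i,k}-n_{j,k}|\leq c_{ij,k}$ for each $ij\in E$ and each $k\in V$. Summing over $k$ yields $\|\varphi_i-\varphi_j\|_1\leq \sum_{k} c_{ij,k}$, and a further sum over $ij\in E$ followed by division by $|E|$ produces
\[
\frac{1}{|E|}\sum_{ij\in E}\|\varphi_i-\varphi_j\|_1 \;\leq\; \frac{1}{|E|}\sum_{ij\in E,\,k\in V} c_{ij,k},
\]
which matches $\varepsilon$ after applying the stated normalization.

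There is no real obstacle here — the proof is a bookkeeping exercise. The only care needed is to read the absolute-value version of the inequality constraint (both $n_{j,k}-n_{i,k}\leq c_{ij,k}$ and its reverse), so that $c_{ij,k}$ controls the variation appearing in the $\ell^1$-norm, and then to track the three normalizing factors ($|V|$ as the total mass of $\sum_i \varphi_i$, $|E|$ as the edge count, and $|V|$ inside the definition of $\varepsilon$) so that each of the three conclusions lands in the form stated in the theorem.
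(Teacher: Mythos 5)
The paper states Theorem~\ref{thm:mean A} without an explicit proof, treating it as a direct reading of Linear Problem~\ref{lp:mean}, so your strategy of verifying the three conditions straight from the constraints is the right one, and your arguments for (1) and (3) are correct: non-negativity of $n$ turns the $\ell^1$-norm of $\sum_i\varphi_i$ into the plain sum $\sum_{i\in V,\,j\in S_i}n_{i,j}=|V|$, and the support condition is literally the first constraint.

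Your treatment of (2), however, has two genuine gaps. First, Linear Problem~\ref{lp:mean} contains only the one-sided constraint $n_{j,k}-n_{i,k}\leq c_{ij,k}$ together with $c_{ij,k}\geq 0$; the ``symmetric counterpart'' $n_{i,k}-n_{j,k}\leq c_{ij,k}$ that you invoke is not among the constraints, so you may only conclude $c_{ij,k}\geq\max\{0,\,n_{j,k}-n_{i,k}\}$, which controls the positive part of the difference but not $|n_{i,k}-n_{j,k}|$. (This can be repaired by relating the two positive parts via $\sum_k(n_{i,k}-n_{j,k})^+=\sum_k(n_{j,k}-n_{i,k})^+-\bigl(\|\varphi_j\|_1-\|\varphi_i\|_1\bigr)$, but that requires an argument you have not supplied, and it is not a mere ``absolute-value reading'' of the constraint.) Second, and more seriously, even granting $\|\varphi_i-\varphi_j\|_1\leq\sum_k c_{ij,k}$, your chain of inequalities ends at
\[
\frac{1}{|E|}\sum_{ij\in E}\|\varphi_i-\varphi_j\|_1\;\leq\;\frac{1}{|E|}\sum_{ij\in E,\,k\in V}c_{ij,k}\;=\;|V|\cdot\varepsilon,
\]
since the theorem normalizes $\varepsilon=\frac{1}{|E||V|}\sum_{ij\in E,\,k\in V}c_{ij,k}$. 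The assertion that this ``matches $\varepsilon$ after applying the stated normalization'' is exactly where the proof breaks: there is an uncancelled factor of $|V|$. You should either exhibit where that factor is absorbed or conclude that the bound you can actually prove is $\frac{1}{|E|}\sum_{ij\in E}\|\varphi_i-\varphi_j\|_1\leq|V|\varepsilon$; a sanity check on the square graph $\mathbb{Z}_2\times\mathbb{Z}_2$ at scale $1$ (where the left-hand side is $\tfrac23$ for the uniform solution) shows the discrepancy is real and not just a slack estimate.
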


Note that if a graph has an isolated vertex $i$ and $S_i = \{ i \}$, then the optimal objective function value for this problem is clearly equal to $0$. This shows that the invariant in general is not suitable for use in geometric group theory (at least as far as applications to the Novikov conjecture go).

The known examples of Cayley graphs of finitely generated groups that do not have property A contain expanding sequences of subgraphs with Cheeger constants bounded away from $0$.
Hence these are examples of graphs without mean property~A.
This shows that there is plenty of room for examples of graphs without property~A that do not contain expanding subsequences.

\subsection{Sparsest cut}\label{sec:sparsest cut}

Let $G = (V, E)$ be a graph with capacity function $\kappa \colon E \to \mathbb{R}$.
The sparsity of a cut $(S, V \setminus S)$  in $G$ equals
\[
  \phi(S) = \frac{\sum_{ij \in \partial S} \kappa_{ij}}{\min(|S|, |V\setminus S|)}.
\]
The \df{sparsest cut} problem is to determine the value
\[
  \phi^\ast(\kappa) = \min \left\{ \frac{|\sum_{ij \in S} \kappa_{ij}|}{|S|} \colon S \subset V, S \neq \emptyset, |S| \leq \frac{|V|}2 \right\}.
\]
If $\kappa = \frac 1{|E|}$, then $\phi^\ast(\kappa)$ is equal to the Cheeger constant.

\begin{definition}
  Let $G = (V, E)$ be a graph with capacity function $\kappa \colon E \to \mathbb{R}$.
  Let $\mathcal{S}$ be a scale on $G$. We let
  \[
    \phi^\ast(\kappa, \mathcal{S}) = \min \left\{ \frac{|\sum_{ij \in T} \kappa_{ij}|}{|T|} \colon T \subset S \in \mathcal{S}, T \neq \emptyset \right\}
  \]
  be the \df{sparsest cut} of $G$ with capacity $\kappa$ at scale $\mathcal{S}$.
\end{definition}

\begin{theorem}
  The optimal solution of Linear Problem~\ref{lp:pseudo-flows} with uniform demands $\eta_i = \eta$ for each $i \in V$ is equal to
  \[
    \frac{|V|}{|E|} \max \left\{ \phi^\ast(\kappa, \mathcal{S}) \colon \sum_{ij \in E} \kappa_{ij} \leq 1 \right\},
  \]
  i.e. to the maximal sparsest cut value over all capacities with total capacity at most~$1$, rescaled by $\frac{|V|}{|E|}$.
\end{theorem}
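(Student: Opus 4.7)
The plan is to read off the theorem in essentially one step from Theorem~\ref{thm:lift and project}, decoupling an inner maximum over $\eta$ from an outer maximum over $\kappa$.

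First I would impose the uniform-demand constraint $\eta_i = \eta$ for every $i \in V$ on Linear Problem~\ref{lp:pseudo-flows}. The objective $\sum_{i\in V}\eta_i$ then collapses to $|V|\eta$, and by Theorem~\ref{thm:lift and project} the flow variables $\varphi_{k,ij}$ can be projected out without changing the optimal value. What remains is a problem in $(\eta,\kappa)$ alone: maximize $|V|\eta$ subject to $\sum_{ij\in E}\kappa_{ij}\le 1$, $\kappa\ge 0$, and the weighted isoperimetric constraints $|T|\eta\le \sum_{ij\in\partial T}\kappa_{ij}$ for every nonempty $T\subset \overline{S}\in\overline{\mathcal{S}}$.

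Next I would split into inner and outer optimizations. For each fixed feasible $\kappa$, the binding constraint on $\eta$ reads
\[
\eta_{\max}(\kappa) \;=\; \min_{T\subset \overline{S}\in\overline{\mathcal{S}},\ T\ne\emptyset}\, \frac{\sum_{ij\in\partial T}\kappa_{ij}}{|T|},
\]
which is exactly $\phi^\ast(\kappa,\overline{\mathcal{S}})$ as defined in this section (reading the summation index set as $\partial T$ rather than $T$, as is forced by $T\subset V$). Consequently the constrained optimum equals $|V|\cdot \max_{\kappa}\phi^\ast(\kappa,\overline{\mathcal{S}})$, where the maximum is taken over capacities of total mass at most one. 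This identification is the heart of the argument.

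All that is left is to match normalizations. Rescaling $\tilde\kappa=|E|\kappa$ and using the linearity of $\phi^\ast$ in $\kappa$ transports the constraint $\sum\kappa_{ij}\le 1$ to the per-edge normalization natural for the sparsest cut and produces the prefactor $\frac{|V|}{|E|}$ appearing in the statement; as a sanity check, specializing to $\kappa\equiv\tfrac{1}{|E|}$ recovers $\frac{|V|}{|E|}\gamma(G,\overline{\mathcal{S}})$, which agrees with Theorem~\ref{thm:cheeger}. The substantive content of the theorem is thus entirely carried by Theorem~\ref{thm:lift and project}; the only real obstacle is careful bookkeeping of which scale ($\mathcal{S}$ vs.\ $\overline{\mathcal{S}}$) and which capacity normalization is in force at each step.
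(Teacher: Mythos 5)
Your proof is correct and follows exactly the route the paper intends: the paper omits this proof, noting only that it is ``very similar to the proof of Theorem~\ref{thm:cheeger}'', and your argument is precisely that adaptation---impose $\eta_i=\eta$, project out the flows via Theorem~\ref{thm:lift and project}, and then decouple the inner maximization over $\eta$ (which yields the sparsest-cut value for fixed $\kappa$) from the outer maximization over $\kappa$. Your closing caveat about normalization is well taken: read literally, the paper's displayed formula (carrying both the prefactor $\frac{|V|}{|E|}$ and the constraint $\sum_{ij\in E}\kappa_{ij}\leq 1$ inside the maximum) disagrees by a factor of $|E|$ with what the projection actually yields, and your value $|V|\cdot\max\bigl\{\phi^\ast(\kappa,\overline{\mathcal{S}})\colon\sum_{ij\in E}\kappa_{ij}\leq 1\bigr\}$ is the one consistent with Theorem~\ref{thm:cheeger} under the specialization $\kappa_{ij}=\frac{1}{|E|}$.
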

The proof is very similar to the proof of Theorem~\ref{thm:cheeger} and is omitted.

\subsection{Computational complexity}

Computation of the Cheeger constant of a graph is NP-hard~\cite{leightonrao1999}. We observe that the Cheeger constant {\bf at scale $\mathcal{S}$} is computable in polynomial time.

\begin{theorem}
  Let $G$ be a finite graph and let $\mathcal{S}$ be a scale on $G$.
  The Cheeger constant of $G$ at scale $\mathcal{S}$ can be computed in
    polynomial time with respect to the size of $G$.
\end{theorem}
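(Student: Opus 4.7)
The plan is to reduce the computation of $\gamma(G, \mathcal{S})$ to solving Linear Problem~\ref{lp:uniform}, which has polynomial size, and then appeal to the polynomial-time solvability of linear programming. Concretely, by Theorem~\ref{thm:cheeger}, the optimal value of Linear Problem~\ref{lp:uniform} at dual scale $\mathcal{\overbar T}$ equals $\frac{|V|}{|E|}\gamma(G, \mathcal{\overbar T})$. Given the input scale $\mathcal{S}$, we form the dual scale $\mathcal{\overbar S}$ (which is straightforward since $\overbar S_i = \{j \in V : i \in S_j\}$ can be read off from the sets $S_j$) and then solve Linear Problem~\ref{lp:uniform} with $\mathcal{T} = \mathcal{\overbar S}$, so that $\mathcal{\overbar T} = \mathcal{\overbar{\overbar S}} = \mathcal{S}$. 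Dividing the optimal objective by $\frac{|V|}{|E|}$ yields $\gamma(G, \mathcal{S})$.

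Next I would verify that Linear Problem~\ref{lp:uniform} has polynomial size in $|G|$. The decision variables are $\eta \in \mathbb{R}$ and $\varphi_{ij,k} \in \mathbb{R}$ indexed by $ij \in E$ and $k \in V$, giving $1 + |E|\cdot|V|$ variables. The constraints comprise $2|E|\cdot|V|$ capacity bounds and at most $|V|^2$ demand inequalities, for a total of $O(|E|\cdot|V| + |V|^2)$ constraints. All nonzero coefficients are $\pm 1$ or $\frac{1}{|E|}$, so the bit-complexity of the problem data is polynomial in the size of the encoding of $G$ and $\mathcal{S}$.

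Now I invoke Khachiyan's theorem: a linear program whose data are rational numbers with total bit-length $L$ can be solved exactly in time polynomial in $L$. Applying this to our instance gives an algorithm that returns the optimal value in polynomial time. Multiplying by $\frac{|E|}{|V|}$ recovers $\gamma(G, \mathcal{S})$ exactly (as a rational).

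The main conceptual point, rather than an obstacle, is that the naive formulation of $\gamma(G, \mathcal{S})$ ranges over subsets $T \subset S$ and therefore has exponentially many candidates; the NP-hardness of the ordinary Cheeger constant~\cite{leightonrao1999} stems from precisely this combinatorial minimum. What makes the scaled problem tractable is the lift-and-project picture developed in Section~\ref{sec:lift}: Linear Problem~\ref{lp:uniform} is a polynomial-size lift of the exponentially-constrained isoperimetric formulation (Linear Problem~\ref{lp:isoperimetric single column} for each $S \in \mathcal{\overbar S}$), and Theorem~\ref{thm:lift and project} together with Theorem~\ref{thm:cheeger} guarantees that the two have equal optimum. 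Thus the only real step to check is counting variables and constraints; the polynomial-time bound then follows immediately.
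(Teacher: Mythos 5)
Your proposal is correct and follows essentially the same route as the paper: both reduce $\gamma(G,\mathcal{S})$ to the polynomial-size uniform pseudo-flows LP (the paper phrases this as Linear Problem~\ref{lp:pseudo-flows} with the extra constraints $\eta_i=\eta$ and $\kappa_{ij}=\frac{1}{|E|}$, which is precisely Linear Problem~\ref{lp:uniform}), justify the reduction via Theorem~\ref{thm:cheeger} and the lift-and-project Theorem~\ref{thm:lift and project}, and then invoke polynomial-time solvability of linear programming. Your explicit variable/constraint count and the appeal to Khachiyan's theorem merely make explicit what the paper leaves implicit.
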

\begin{proof}
  As shown in the proof of Theorem~\ref{thm:cheeger}, the solution of the following linear problem
  \begin{maxi*}[1]{}{|V|\cdot\eta}{}{}
	\addConstraint{\sum_{i \in T} \eta}{\leq \sum_{ij \in \partial T} \frac 1{|E|}}{\text{ for each } T \subset S \in \mathcal{S}}
	\addConstraint{\eta}{\in \mathbb{R}}{{} }
  \end{maxi*}  
  is equal to $\frac{|V|}{|E|}\gamma(G, \mathcal{S})$.
  This problem is equivalent to Linear Problem~\ref{lp:isoperimetric} with extra constraints
  \[
    \eta_i = \eta \text{ for each } i \in V,
  \]
  \[
    \kappa_{ij} = \frac 1{|E|} \text{ for each } ij \in E.
  \]
  By Theorem~\ref{thm:lift and project}, it this problem is equivalent to Linear Problem~\ref{lp:pseudo-flows} with these extra constraints.
  
  The size of Linear Problem~\ref{lp:pseudo-flows} is polynomial in the size of $G$ (regardless of the choice of $\mathcal{S}$) and so is the number of extra constraints.
  The size of the coefficients depends only on $|E|$.
  Therefore the problem can be solved in time that is polynomial with
  respect to the size of $G$.
\end{proof}

A similar argument may be used to show that the other optimization problems at scale $\mathcal{S}$ that are discussed in this paper (i.e. property A and sparsest cut) are also polynomial.

The polynomial complexity of the Cheeger constant at scale $\mathcal{S}$ depends on the fact that we compute the minimum of $\frac{|\partial T|}{|T|}$ over a family of sets $\{ T \}$ that has a polynomial set of maximal elements (with respect to inclusion $\subset$). 
This is not the case when we consider the family $\{ T \colon |T| \leq \frac{|V|}2 \}$ and we cannot compute the Cheeger constant of the graph in polynomial time this way.

\section{The cube graph \texorpdfstring{- $\mathbb{Z}_2 \times \mathbb{Z}_2 \times \cdots \times \mathbb{Z}_2$}{}}

Let $Q_n$ be the graph with vertex set $\{ 0, 1 \}^n$
and Hamming distance between vertices, i.e.
\[
  d(u, v) = | \{ i \colon 1 \leq i \leq n, u_i \neq v_i \} |.
\]
Two vertices are connected by an edge iff they differ at exactly one coordinate.
We call~$Q_n$ the \df{$n$-dimensional hypercube} graph.
It is a Cayley graph of
\[
\underbrace{\mathbb{Z}_2 \times \mathbb{Z}_2 \times \cdots \times \mathbb{Z}_2}_{n \text{ times}}
\]
taken with the standard generating set.

We have already computed $\varepsilon_{1, Q_2} = \frac 23$ and $\varepsilon_{2, Q_3} = \frac 27$. 
Presently we give the general solution.

\begin{theorem}\label{thm:cube}
  Let $Q_n$ be the $n$-dimensional hypercube graph.
  The minimal variation of probability measures for $Q_n$ at scale $S \geq 0$ is
  \[
  \varepsilon_{S, Q_n} = \frac{2\binom{n-1}S}{\sum_{k=0}^S \binom{n}{k}}.
  \]
\end{theorem}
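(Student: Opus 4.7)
The plan is to produce primal and dual solutions that both attain the value $\frac{2\binom{n-1}{S}}{\sum_{k=0}^S\binom{n}{k}}$, in direct analogy with the treatment of $Q_3$ at scale $S=2$ earlier in the paper.

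For the upper bound, I would exhibit the natural symmetric candidate: for each vertex $v\in\{0,1\}^n$, let $\xi_v$ be the uniform probability measure on $B(v,S)$, i.e.\ $\xi_v(w)=1/|B(v,S)|$ when $w\in B(v,S)$ and zero otherwise, where $|B(v,S)|=\sum_{k=0}^S\binom{n}{k}$ is independent of $v$. For an edge $vw$ with $v,w$ differing in coordinate $i$, the Hamming identity $d(w,x)=d(v,x)\pm 1$ (sign depending on whether $x_i=v_i$ or $x_i=w_i$) shows that $B(v,S)\triangle B(w,S)$ consists of those vertices at distance exactly $S$ from one of the endpoints whose $i$-th coordinate matches that endpoint; each of the two resulting sets has $\binom{n-1}{S}$ elements, so
\[
\|\xi_v-\xi_w\|_1=\frac{2\binom{n-1}{S}}{\sum_{k=0}^S\binom{n}{k}}.
\]
This bounds $\varepsilon_{S,Q_n}$ from above.

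For the matching lower bound, I would build a dual solution of Linear Problem~\ref{lp:pseudo-flows} generalising the radial flow from the $Q_3$ discussion. Since $Q_n$ is both vertex- and edge-transitive, Theorem~\ref{thm:averaging} lets me restrict to uniform $\kappa_{ij}=1/|E|=1/(n\cdot 2^{n-1})$ and uniform $\eta_i=\eta$. By vertex-transitivity it suffices to describe a single flow $\varphi_0$ centered at the origin, and by the symmetry of $Q_n$ under coordinate permutations I may further assume that $\varphi_0$ takes a common value $f_r$ on every edge directed from a vertex of Hamming weight $r+1$ to one of weight $r$. Saturating the boundary, $f_S=1/|E|$, and applying conservation of supply on each sub-ball $B(0,r)$ for $r\le S$ gives
\[
f_r\cdot n\binom{n-1}{r}=|B(0,r)|\cdot\eta,\qquad 0\le r\le S,
\]
whose $r=S$ case fixes $\eta=\binom{n-1}{S}/(2^{n-1}\sum_{k=0}^S\binom{n}{k})$ and hence $\sum_{i\in V}\eta_i=|V|\eta=2\binom{n-1}{S}/\sum_{k=0}^S\binom{n}{k}$, matching the primal.

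The main obstacle is the admissibility of this flow, i.e.\ the capacity constraints $f_r\le 1/|E|$ for $0\le r\le S$. A short calculation shows these are equivalent to the monotonicity
\[
\frac{\binom{n-1}{r}}{\sum_{k=0}^r\binom{n}{k}}\ \ge\ \frac{\binom{n-1}{S}}{\sum_{k=0}^S\binom{n}{k}}\qquad(0\le r\le S),
\]
i.e.\ the isoperimetric ratio $|\partial B(0,r)|/|B(0,r)|=n\binom{n-1}{r}/|B(0,r)|$ of concentric Hamming balls is non-increasing in $r$. I expect to verify this by comparing consecutive terms: using $\binom{n-1}{r+1}-\binom{n-1}{r}=\binom{n-1}{r}(n-2-2r)/(r+1)$, the inequality is trivial once $r\ge(n-2)/2$ (the difference being non-positive), and for smaller $r$ it reduces to $(n-2-2r)|B(0,r)|\le n\binom{n-1}{r}$, which follows from a straightforward induction on $r$ using the identity $n\binom{n-1}{r}=(n-r)\binom{n}{r}$. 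An alternative packaging is to invoke Theorem~\ref{thm:cheeger} directly: the same monotonicity shows that $\gamma(Q_n,S)=n\binom{n-1}{S}/|B(0,S)|$ is attained by the full ball $B(v,S)$, from which the lower bound follows via the symmetric-solution argument of Theorem~\ref{thm:averaging}.
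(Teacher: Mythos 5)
Your proposal is correct and follows essentially the same route as the paper: the uniform measure on $B(v,S)$ for the primal, and for the dual a radial flow with uniform capacity $\kappa_{ij}=1/|E|$ that saturates the sphere-to-sphere edges and spreads the supply evenly, with admissibility reducing to the monotonicity of the ball isoperimetric ratio $n\binom{n-1}{r}/|B(0,r)|$ in $r$ (the paper verifies this same inequality, in the equivalent form $w_{m+1}\le w_m$ with $w_m=\binom{n}{m+1}(m+1)/\sum_{k=0}^m\binom{n}{k}$, via a geometric-series bound rather than your consecutive-term induction, which also checks out). One small caution: your ``alternative packaging'' via Theorem~\ref{thm:cheeger} would additionally require that the full ball minimizes $|\partial T|/|T|$ over \emph{all} subsets $T\subset B(v,S)$, not just over concentric balls, which does not follow from the stated monotonicity alone --- but your primary argument does not rely on this.
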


As an application we prove the following theorem of P. Nowak.
\begin{corollary}[P. Nowak, \cite{nowak2007}]
The disjoint union
\[
  \coprod_{n \in \mathbb{N}} \{ 0, 1 \}^n
\]
with $\ell_1$ metric does not have property A.
\end{corollary}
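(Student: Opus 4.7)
The plan is to reduce property A of the disjoint union $X = \coprod_{n} \{0,1\}^n$ to the uniform behavior of $\varepsilon_{S, Q_n}$ on the individual cubes, then feed the explicit formula from Theorem~\ref{thm:cube} into a direct calculation.

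First I would record the standard observation that for a metric space which is a disjoint union of finite pieces placed so that different pieces lie at pairwise distance greater than the scale $S$ (this is automatic for the $\ell_1$ disjoint union, since we may always translate the cubes apart), the support condition $\supp \xi_i \subset B(i, S)$ forces each admissible probability measure $\xi_i$ to live inside the single cube containing $i$. Hence Optimization Problem~\ref{op:measures} for $X$ at scale $S$ decouples over the components and
\[
\varepsilon_{S, X} \;=\; \sup_{n} \varepsilon_{S, Q_n}.
\]
Thus $X$ has property A if and only if $\lim_{S \to \infty} \sup_n \varepsilon_{S, Q_n} = 0$. This reduction is the one subtle point; after it, the corollary is a computation.

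Next I would fix $S \ge 0$ and let $n \to \infty$, using Theorem~\ref{thm:cube}. Writing
\[
\varepsilon_{S, Q_n} \;=\; \frac{2\binom{n-1}{S}}{\sum_{k=0}^{S}\binom{n}{k}},
\]
I split the denominator so that $\binom{n}{S}$ is isolated as the leading term. The elementary identity $\binom{n-1}{S}/\binom{n}{S} = (n-S)/n$ tends to $1$, while $\binom{n}{k}/\binom{n}{S} \to 0$ for each $k < S$ as $n \to \infty$ with $S$ fixed. Therefore
\[
\lim_{n \to \infty} \varepsilon_{S, Q_n} \;=\; 2,
\]
so in particular $\sup_n \varepsilon_{S, Q_n} = 2$ for every fixed $S$.

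Combining the two steps, $\lim_{S \to \infty}\varepsilon_{S, X} = \lim_{S \to \infty} \sup_n \varepsilon_{S, Q_n} = 2 \neq 0$, so $X$ fails property A. The main (and really only) obstacle is the reduction step: justifying that optimal measures on the disjoint union decouple across components and that the $\ell_1$ metric on $\coprod \{0,1\}^n$ can be arranged so that distinct cubes are arbitrarily far apart, so no finite scale $S$ sees more than one component at a time. Once that is in place, the numerical asymptotic from Theorem~\ref{thm:cube} does the rest.
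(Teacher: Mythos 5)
Your proposal is correct, but it reaches the conclusion by a different route than the paper. The paper never computes $\varepsilon_{S,G}$ for the infinite union directly: it sets $G_n = \coprod_{m=1}^n Q_m$, notes these are finite convex subgraphs exhausting $G$, applies Theorem~\ref{thm:cube} to get $\varepsilon_{S,G_n} = \max_{m\le n} \frac{2\binom{m-1}{S}}{\sum_{k=0}^S\binom{m}{k}}$, and then invokes the finite-approximation criterion of Theorem~\ref{thm:property A limit of finite graphs} to conclude from $\limsup_n \varepsilon_{S,G_n} \ge 2$. You instead decouple the infinite graph over its components and argue $\varepsilon_{S,X} = \sup_n \varepsilon_{S,Q_n}$ directly; this is legitimate within the paper's framework, since the definition of property~A is stated for arbitrary (possibly infinite) graphs and in a disjoint union of graphs distinct components sit at infinite distance, so supports and edge constraints never mix components. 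Your route is arguably more elementary in that it bypasses Theorem~\ref{thm:property A limit of finite graphs} (whose proof occupies the last section of the paper), at the cost of having to verify the decoupling identity by hand; the paper's route uses machinery it has already built and generalizes to situations where the exhausting subgraphs are not unions of whole components. One small caution: your parenthetical about ``translating the cubes apart'' is unnecessary and slightly misleading---if you place the cubes at finite, scale-dependent distances you would need a coarse-invariance argument that the paper does not develop, whereas the paper's identification of $\coprod_n\{0,1\}^n$ with the graph-theoretic disjoint union $\coprod_n Q_n$ already puts distinct components at infinite distance, which is all your decoupling needs. The asymptotic step ($\binom{n-1}{S}/\binom{n}{S}\to 1$ and $\binom{n}{k}/\binom{n}{S}\to 0$ for $k<S$, hence $\varepsilon_{S,Q_n}\to 2$) matches the paper's computation exactly.
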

\begin{proof}
The space $\coprod_{n \in \mathbb{N}} \{ 0, 1 \}^n$ with $\ell_1$ metric is isometric to the disjoint union $G = \coprod_{n \in \mathbb{N}} Q_n$ of hypercube graphs.
We let 
\[G_n = \coprod_{m=1}^n Q_m.\]
By Theorem~\ref{thm:cube}, we have 
\[
\varepsilon_{S, G_n} = \max_{1 \leq m \leq n} \frac{2\binom{m-1}S}{\sum_{k=0}^S \binom{m}{k}}.
\]
For each $S \geq 0$ we have
\[
\limsup_{n \to \infty} \varepsilon_{S, G_n} \geq \lim_{n \to \infty} \frac{2\binom{n-1}S}{\sum_{k=0}^S \binom{n}{k}} = 2.
\]

Each of the graphs $G_n$ is a convex finite subset of $G$ and $G$ is locally finite.
By Theorem~\ref{thm:property A limit of finite graphs}, 
  $G$ does not have property A.
\end{proof}

We prove Theorem~\ref{thm:cube} by providing solutions of Optimization Problem~\ref{op:measures} (primal) and Optimization Problem~\ref{op:pseudo-flows} (dual) with equal objective values.

\subsection{Primal solution}

Let $Q_n = (V, E)$.
We have $V = \{ 0, 1 \}^n$ and $E$ is a set of pairs of points of $V$ with Hamming distance $1$. 
The edges are oriented so that the vertex with smaller norm is connected to the vertex with larger norm. We have
\[
  |V| = 2^n, \quad |E| = n 2^{n-1}.
\]
Let
\[
  B(i, r) = \{ j \in V \colon \|i-j\|_1 \leq r \},
  \quad
  S(i, r) = \{ j \in V \colon \|i-j\|_1 = r \}.
\]
We have
\[
  | B(i, r) | = \sum_{k = 0}^r \binom nk,
  \quad
  | S(i, r) | = \binom nr.
\]
The scale on $Q_n$ is
\[
  \mathcal{S} = \{ S_i = S(i, S) \colon i \in V \}.
\]
As a solution of Optimization Problem~\ref{op:measures}, we let
\[
  \xi_i = \frac {\chi_{B(i, S)}}{\|\chi_{B(i, S)}\|_1} = \frac 1{\sum_{k = 0}^S \binom nk} \chi_{S_i},
\]
where $\chi_A \colon V \to \mathbb{R}$ is a characteristic function of a set $A \subset V$.
Then for $ij \in E$ we have
\[
  | (S_i \cup S_j) \setminus (S_i \cap S_j) | = 2 \binom{n-1}{S}.
\]
Therefore
\[
  \| \xi_i - \xi_j \|_1 = \frac{2\binom{n-1}S}{\sum_{k=0}^S \binom{n}{k}}
\]
and
\[
  \varepsilon = \frac{2\binom{n-1}S}{\sum_{k=0}^S \binom{n}{k}}.
\]

\subsection{Dual solution} \label{subsection:dual-cubes}

The idea of the construction is very simple: we flow the maximum amount through the boundary $\partial S_j$ of $S_j \in \mathcal{S}$ and then spread the amount evenly among all nodes of $S_j$. We have to verify that there is enough capacity to carry this out.

We set equal capacities on edges
\[
  \kappa_{ij} = \frac{1}{|E|} = \frac 1{n2^{n-1}}, \text{ for each } ij \in E.
\]
We will construct a pseudo-flow for $j \in V$ on $S_j$ with supplies
\[
\sigma_{j, i} = \frac{\binom{n}{S+1}(S+1)}{n2^{n-1}\sum_{l=0}^S \binom{n}{l} }.
\]
These values do not depend on $j$ as the construction is analogous at each vertex.
To fix edge orientation we will describe it for $j = 000\ldots0$.

Consider spheres of radius $m$ and $m+1$ centered at $j$.
\begin{center}
\begin{tikzpicture}[rotate=270]
  \draw[gray] (17:12) arc (17:-17:12);
  \draw[gray] (19:8) arc (19:-19:8);

  \draw (15:8) node[vertex]  (1000) {};
  \draw (5:8) node[vertex]  (0100) {};
  \draw (-5:8) node[vertex]   (0010) {};
  \draw (-15:8) node[vertex] (0001) {};
  
  \draw (15:12) node[vertex] (1100) {};
  \draw (9:12) node[vertex] (1010) {};
  \draw (3:12) node[vertex] (1001) {};
  \draw (-3:12)  node[vertex] (0110) {};
  \draw (-9:12) node[vertex] (0101) {};
  \draw (-15:12) node[vertex] (0011) {};
  
  \draw (1000) edge[edge] (1100);
  \draw (1000) edge[edge] (1010);
  \draw (1000) edge[edge] (1001);
  \draw (0100) edge[edge] (1100);
  \draw (0100) edge[edge] (0110);
  \draw (0100) edge[edge] (0101);
  \draw (0010) edge[edge] (1010);
  \draw (0010) edge[edge] (0110);
  \draw (0010) edge[edge] (0011);
  \draw (0001) edge[edge] (1001);
  \draw (0001) edge[edge] (0101);
  \draw (0001) edge[edge] (0011);

  \node[draw=none, xshift=29mm, align=left] at (1000) { 
    \footnotesize $m$-sphere, $\binom{n}{m}$ vertices,\\
    \footnotesize $n-m$ outgoing edges per vertex, \\
    \footnotesize $\binom{n}{m}(n-m)$ edges total.
  };
  \node[draw=none, xshift=27mm, align=left] at (1100) { 
    \footnotesize $m+1$-sphere, $\binom{n}{m+1}$ vertices,\\
    \footnotesize $m+1$ incoming edges per vertex,\\
    \footnotesize $\binom{n}{m+1}(m+1)$ edges total.
  };
\end{tikzpicture}
\end{center}
\vspace{3mm}

We have
\[
  \binom{n}{m}(n-m) = \binom{n}{m+1}(m+1)
\]
edges connecting vertices of the $m$-sphere to vertices of the $m+1$-sphere.
We set the pseudo-flow on outgoing edges from the $S$-sphere to maximal capacity
\[
  \varphi_{j, ik} = -\frac{1}{|E|}, \text{ for each } i, k \in V, \|i-j\|_1 = S, \|k-j\|_1 = S+1.
\]
This transfers
\[
  \binom{n}{S+1}(S+1) \frac{1}{|E|} = \binom{n}{S+1} \frac{S+1}{n2^{n-1}}
\]
total weight into $S_j$. The flows on the inside edges will transfer supply internally and will not change the total supply. Our goal is to redistribute supply equally among all vertices of $S_j$, giving the values of $\sigma_{j, i}$ specified above.

To distribute the supply equally we need to transfer 
\[
  \frac{\binom{n}{S+1}(S+1)}{n2^{n-1}\sum_{l=0}^S \binom{n}{l}} \sum_{k=0}^m \binom{n}{k}
\]
of supply into an $m$-ball centered at $j$. This has to be done using $\binom {n}{m+1} (m+1)$ edges connecting $m$-sphere to $m+1$-sphere. We cannot exceed edge capacity and it is possible if and only if
\[
  \frac{
    \frac{\binom{n}{S+1}(S+1)}{n2^{n-1}\sum_{l=0}^S \binom{n}{l}} \sum_{k=0}^m \binom{n}{k}
  }
  { \binom{n}{m+1} (m+1)}
  \leq
  \frac 1{n2^{n-1}}.
\]
We can cancel $n2^{n-1}$ and rewrite the inequality as
\[
  \frac{\binom{n}{S+1}(S+1)}{\sum_{k=0}^S \binom{n}{k}} \leq 
  \frac{\binom{n}{m+1}(m+1)}{\sum_{k=0}^m \binom{n}{k}}.
\]
We have to prove that it is satisfied for each $m \leq S < n$. Let
\[
  w_m = \frac{\binom{n}{m+1}(m+1)}{\sum_{k=0}^m \binom{n}{k}}.
\]
It is enough to prove that
\[
  w_{m+1} \leq w_m
\]
for each $m < n$.

After canceling numerators we get the inequality
\[
\frac{\sum_{k=0}^m \binom{n}{k}}{\sum_{k=0}^{m+1} \binom{n}{k}} \leq \frac{m+1}{n-m-1}
\]
which is equivalent to (for $m+1 \leq n-m-1$, otherwise the inequality is trivial)
\[
\sum_{k=0}^m \binom{n}{k} \leq \frac{m+1}{n-2m-2} \binom{n}{m+1}.
\]
% https://mathoverflow.net/questions/17202/sum-of-the-first-k-binomial-coefficients-for-fixed-n
We have
\[
\frac{\binom{n}{m} + \binom{n}{m-1} + \cdots}{\binom{n}{m+1}} =
\frac{m+1}{n-m} + \frac{(m+1)m}{(n-m)(n-m+1)} + \cdots \leq
\]
\[
\leq \frac{m+1}{n-m} + \left(\frac{m+1}{n-m}\right)^2 + \cdots \leq 
\frac{m+1}{n-m} \cdot \frac{1}{1 - \frac{m+1}{n-m}} =
\]
\[
  = \frac{m+1}{n-2m-1} < \frac{m+1}{n-2m-2},
\]
which is the desired inequality.

Therefore we have 
\[
\sigma_{j, i} = \frac{\binom{n}{S+1}(S+1)}{n2^{n-1}\sum_{l=0}^S \binom{n}{l} }
\]
and
\[
\eta_i = \min_{j \in S_i} \eta_{j, i} = \frac{\binom{n}{S+1}(S+1)}{n2^{n-1}\sum_{l=0}^S \binom{n}{l}}.
\]
This gives a total demand
\[
\sum_{i \in V} \eta_i = 2^n \frac{\binom{n}{S+1}(S+1)}{n2^{n-1}\sum_{l=0}^S \binom{n}{l}}
  = \frac{2\binom{n}{S+1}(S+1)}{n\sum_{l=0}^S\binom{n}{l}}.
\]

Finally, we have
\[
\frac{2\binom{n}{S+1}(S+1)}{n\sum_{l=0}^S\binom{n}{l}} =
\frac{2\binom{n-1}S}{\sum_{k=0}^S \binom{n}{k}},
\]
because $\binom{n}{S+1} \frac{S+1}{n} = \binom{n-1}{S}$.

\subsection{Example \texorpdfstring{$n=3$, $S=1$}{n = 3, S = 1}}

A primal problem solution is obtained by setting
\[
  \xi_i = \frac 14 \chi_{S_i}.
\]
Then for each $ik \in E$
\[
  \| \xi_i - \xi_k \|_1 = 1
\]
and the value of the objective function is $\varepsilon = 1$.

\begin{center}
% http://www.texample.net/tikz/examples/cuboid/
\begin{minipage}{0.48\textwidth}
\begin{tikzpicture}[->,>=stealth', shorten >=2pt, shorten <= 2pt, node distance=2.8cm, scale=1.4]
	\coordinate (P1) at (-4cm,5cm); % left vanishing point (To pick)
	\coordinate (P2) at (8cm,1.9cm); % right vanishing point (To pick)

	%% (A1) and (A2) defines the 2 central points of the cuboid
	\node[draw, circle] (010) at (0em,0cm) {$\fs 010$}; % central top point (To pick)
	\node[draw, circle] (000) at (0em,-2cm) {$\fs 000$}; % central bottom point (To pick)

	%% (A3) to (A8) are computed given a unique parameter (or 2) .8
	% You can vary .8 from 0 to 1 to change perspective on left side
	\node[draw, circle] (001) at ($(P1)!.8!(000)$) {$\fs 001$}; % To pick for perspective 
	\node[draw, circle] (011) at ($(P1)!.8!(010)$) {$\fs 011$};

	% You can vary .8 from 0 to 1 to change perspective on right side
	\node[draw, circle] (100) at ($(P2)!.7!(000)$) {$\fs 100$};
	\node[draw, circle] (110) at ($(P2)!.7!(010)$) {$\fs 110$};

	%% Automatically compute the last 2 points with intersections
	\node[draw, circle] (111) at
	  (intersection cs: first line={(110) -- (P1)},
			    second line={(011) -- (P2)}) {$\fs 111$};
	\node[draw, circle] (101) at
	  (intersection cs: first line={(100) -- (P1)}, 
			    second line={(001) -- (P2)}) {$\fs 101$};

    \draw (001) edge (101);
    \draw (100) edge (101);
    \draw (101) edge (111);

    \draw[thick] (000) edge (100);
    \draw[-, white, line width=2mm] (000) edge (010);
    \draw[thick] (000) edge (010);
    \draw[thick] (000) edge (001);

    \draw (100)[thick] edge (110);

    \draw[-, white, line width=2mm] (010) edge (110);
    \draw (010)[thick] edge (110);
    \draw (010)[thick] edge (011);

    \draw (001)[thick] edge (011);

    \draw (011)[thick] edge (111);
    \draw (110)[thick] edge (111);
\end{tikzpicture}
\end{minipage}
\begin{minipage}{0.48\textwidth}
\begin{tikzpicture}[->,>=stealth', shorten >=2pt, shorten <= 2pt, node distance=2.8cm, scale=1.4]
	\coordinate (P1) at (-4cm,5cm); % left vanishing point (To pick)
	\coordinate (P2) at (8cm,1.9cm); % right vanishing point (To pick)

	%% (A1) and (A2) defines the 2 central points of the cuboid
	\node[draw, circle] (010) at (0em,0cm) {$\fs \frac 18$}; % central top point (To pick)
	\node[draw, double, circle] (000) at (0em,-2cm) {$\fs \frac 18$}; % central bottom point (To pick)

	%% (A3) to (A8) are computed given a unique parameter (or 2) .8
	% You can vary .8 from 0 to 1 to change perspective on left side
	\node[draw, circle] (001) at ($(P1)!.8!(000)$) {$\fs \frac 18$}; % To pick for perspective 
	\node[draw=none, circle] (011) at ($(P1)!.8!(010)$) {};

	% You can vary .8 from 0 to 1 to change perspective on right side
	\node[draw, circle] (100) at ($(P2)!.7!(000)$) {$\fs \frac 18$};
	\node[draw=none, circle] (110) at ($(P2)!.7!(010)$) {};

	%% Automatically compute the last 2 points with intersections
	\node[draw=none, circle] (111) at
	  (intersection cs: first line={(110) -- (P1)},
			    second line={(011) -- (P2)}) {};
	\node[draw=none, circle] (101) at
	  (intersection cs: first line={(100) -- (P1)}, 
			    second line={(001) -- (P2)}) {};

    \draw[shorten >=30pt] (001) edge node[below right]{$\fs -\frac1{12}$} (101);
    \draw[shorten >=20pt] (100) edge node[above]{$\fs -\frac1{12}$} (101);

    \draw[thick] (000) edge node[above]{$\fs -\frac1{24}$} (100);
    \draw[-, white, line width=2mm] (000) edge (010);
    \draw[thick] (000) edge node[right]{$\fs -\frac1{24}$} (010);
    \draw[thick] (000) edge node[left]{$\fs -\frac1{24}$} (001);

    \draw (100)[thick, shorten >=20pt] edge node[right]{$\fs -\frac1{12}$} (110);

    \draw[-, white, line width=2mm] (010) edge (110);
    \draw (010)[thick, shorten >=30pt] edge node[above left]{$\fs -\frac1{12}$} (110);
    \draw (010)[thick, shorten >=15pt] edge node[right]{$\fs -\frac1{12}$} (011);

    \draw (001)[thick, shorten >=10pt] edge node[left]{$\fs -\frac1{12}$} (011);
\end{tikzpicture}
\end{minipage}
\end{center}
The pseudo-flow $\varphi_{000}$ transfers equal amounts along the edges from the $(m+1)$-sphere to the $m$-sphere with uniform supply.
\[
  \sigma_{000,000} = \sigma_{0001, 001} = \sigma_{000, 010} = \sigma_{000, 001} = \frac 18.
\]
By constructing analogous pseudo-flows at other vertices we show that $\eta_i = \frac 18$ for each $i$. This gives an objective value
\[
  \sum_{i \in V} \eta_i = 8 \cdot \frac 18 = 1,
\]
which proves that the primal solution found above is optimal.

\section{Regular graphs with large girth}\label{sec:girth}

There are $29503$ connected simple graphs with $12$ edges (up to isomorphism).
For Linear Problem~\ref{lp:measures} at scale $S=1$, the largest objective function value among these graphs is $\varepsilon = \frac{32}{29}$.
This value is achieved by the following two graphs.
\begin{center}
\includegraphics[width=0.45\textwidth]{figures/girth1/graph.tikz}
\includegraphics[width=0.45\textwidth]{figures/girth2/graph.tikz}
\end{center}
Note that both graphs have girth $4$.
We recall that the \df{girth} of a graph $G$ is the length of a shortest cycle in $G$.
The $1$-balls in these graphs have no cycles.
At scale $S=2$, the largest objective function value is $\varepsilon = \frac 47$.
There are several graphs with this value and two of these graphs
  are shown below.
  
\begin{center}
\includegraphics[width=0.45\textwidth]{figures/girth3/graph.tikz}
\includegraphics[width=0.35\textwidth]{figures/girth4/graph.tikz}
\end{center}

Other graphs with $\varepsilon = \frac 47$ are similar and all have girth $6$.
The $2$-balls in these graphs have no cycles.
It suggests that large girth implies large variation of probability measures
  and we show that it is indeed the case in the following theorem.
\begin{theorem}\label{thm:girth}
	Let $G$ be a $d$-regular graph with girth $c$. Let $S \geq 0$. 
    If $d \geq 3$ and $2S + 1 < c$, then the minimal variation of probability measures for $G$ at scale $S$ is
	\[
	\varepsilon_{S, G} = \frac{2(d-1)^S(2-d)}{2-d(d-1)^S}.
	\]
\end{theorem}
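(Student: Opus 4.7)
The plan is to obtain the formula by matching a primal and a dual bound: an explicit uniform measure on balls for the upper bound, and the Cheeger constant at scale $\mathcal{S}$ for the lower bound.

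First I would establish that under the hypothesis $2S+1 < c$ (equivalently $c \ge 2S+2$, since $c$ is an integer) the induced subgraph of $G$ on each ball $B(i,S)$ is a tree. An easy induction on the radius works: any extra edge in $B(i,r+1)$ would close a cycle of length at most $2r+2 \le 2S < c$ via two shortest paths back to $i$, contradicting the girth assumption. Because $G$ is $d$-regular and only vertices at depth exactly $S$ in $B(i,S)$ lose neighbors to the complement, each ball is the rooted $(d,d-1)$-tree of depth $S$, with
\[
N := |B(i,S)| \;=\; 1 + d + d(d-1) + \cdots + d(d-1)^{S-1} \;=\; \frac{d(d-1)^S - 2}{d-2}.
\]
In particular all balls are pairwise isomorphic and the ball scale is symmetric, so $\mathcal{\overline S} = \mathcal{S}$.

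For the lower bound I will invoke Theorem~\ref{thm:cheeger}. Let $T \subset B(i,S)$ be non-empty and denote by $k$ the number of its connected components in the tree $B(i,S)$. Since the induced subgraph on $T$ is a forest with $|T|-k$ edges and $G$ is $d$-regular, counting edge endpoints gives
\[
|\partial T| \;=\; d|T| - 2(|T|-k) \;=\; (d-2)|T| + 2k,
\]
so $|\partial T|/|T| = (d-2) + 2k/|T|$ is minimized when $k=1$ and $|T|$ is maximal, i.e.\ at $T = B(i,S)$, giving $\gamma(G,\mathcal{S}) = (d-2) + 2/N$. Since $G$ is $d$-regular, $|V|/|E| = 2/d$; since Linear Problem~\ref{lp:uniform} is Linear Problem~\ref{lp:pseudo-flows} with extra constraints its optimum is at most the unconstrained dual optimum $\varepsilon_{S,G}$ (Theorem~\ref{thm:duality}). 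Hence Theorem~\ref{thm:cheeger} yields
\[
\varepsilon_{S,G} \;\ge\; \frac{|V|}{|E|}\,\gamma(G,\mathcal{S}) \;=\; \frac{2(d-2)}{d}\cdot\frac{d(d-1)^S}{d(d-1)^S - 2} \;=\; \frac{2(d-2)(d-1)^S}{d(d-1)^S - 2}.
\]

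For the matching upper bound I would exhibit the natural primal solution $\xi_i = N^{-1}\chi_{B(i,S)}$ to Optimization Problem~\ref{op:measures}, which is plainly an admissible family of probability measures supported at scale $S$. On any edge $ij$, the tree structure of $B(i,S)$ decomposes $B(i,S) \cap B(j,S) = B(i,S-1) \cup T_{ij}$, where $T_{ij}$ is the subtree of $B(i,S)$ rooted at $j$, and inclusion-exclusion gives $|B(i,S)\cap B(j,S)| = 2((d-1)^S - 1)/(d-2)$. Since both measures take only the values $0$ and $1/N$ and have the same total mass,
\[
\|\xi_i - \xi_j\|_1 \;=\; 2 - \frac{2\,|B(i,S)\cap B(j,S)|}{N} \;=\; \frac{2(d-2)(d-1)^S}{d(d-1)^S - 2},
\]
exactly matching the lower bound. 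The delicate point, and the main obstacle, is verifying that a vertex $v \in B(i,S)$ lying outside $T_{ij}$ really has $d_G(v,j) = S+1$ rather than something smaller: any shortcut path of length $\le S$ from $v$ to $j$, concatenated with the tree path $v \to i \to j$, would produce a closed walk of length at most $2S+1 < c$ containing a simple cycle, contradicting the girth bound. Once this walk-to-cycle argument is in place, the intersection count and the two matching bounds close the proof.
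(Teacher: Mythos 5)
Your proposal is correct and follows essentially the same route as the paper: the lower bound comes from the Cheeger constant at scale $S$ via Theorem~\ref{thm:cheeger} together with the forest edge count $|\partial T| = (d-2)|T| + 2k$ on subsets of balls, and the upper bound from the uniform probability measures $\xi_i = |B(i,S)|^{-1}\chi_{B(i,S)}$, whose variation $2(d-1)^S/|B(i,S)|$ matches. The only difference is that you spell out the verification that balls are trees and that vertices off the $j$-side are genuinely at distance $S+1$ from $j$, details the paper leaves implicit.
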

Observe that the extremal graphs with $12$ edges shown above are not $d$-regular.
The Heawood graph is $3$-regular with girth $6$ and in Section~\ref{sec:heawood} we computed $\varepsilon=\frac 45$ at scale $S=2$ for this graph. 
It agrees with the value given in Theorem~\ref{thm:girth}.

As an application we prove the following theorem of R.~Willett (with slightly weaker assumptions in that we do not assume that sequence $d_n$ is bounded).
\begin{corollary}[R. Willett, \cite{willett2011}]
	Suppose $d_n$ is a sequence of integers with $d_n\ge 3$ and suppose $c_n$ is a sequence of integers going to infinity. 
	Let $G_n$ be a $d_n$-regular graph with girth $c_n$.
	The disjoint union 
	\[
	\coprod_{n \in \mathbb{N}} G_n
	\]
	fails to have property~A.
\end{corollary}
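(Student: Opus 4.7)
The plan is to mimic the proof of Nowak's corollary given above for cubes, substituting Theorem~\ref{thm:girth} for Theorem~\ref{thm:cube}. First I would set $G = \coprod_{n \in \mathbb{N}} G_n$ with the path-length metric (so components lie at infinite distance), and take the ascending sequence of finite convex subgraphs $H_n = \coprod_{m=1}^{n} G_m$ whose union is $G$. Each $G_m$ is $d_m$-regular hence of finite degree, so $G$ is locally finite; each $H_n$ is convex because any geodesic between two of its vertices lies inside the single component containing them, which sits inside $H_n$.

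Next I would exploit the disjoint-union structure: because there are no edges between the components of $H_n$, any instance of Optimization Problem~\ref{op:measures} on $H_n$ decouples into independent instances on the components, so
\[
\varepsilon_{S, H_n} = \max_{1 \le m \le n} \varepsilon_{S, G_m}.
\]
Since $c_m \to \infty$, for every fixed $S \geq 1$ and every sufficiently large $n$ there exists some $m \leq n$ with $2S + 1 < c_m$, and Theorem~\ref{thm:girth} applies to give
\[
\varepsilon_{S, G_m} = \frac{2(d_m - 1)^S (d_m - 2)}{d_m (d_m - 1)^S - 2}.
\]

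Finally I would bound this from below uniformly in $d_m \geq 3$. Dropping the $-2$ in the denominator yields
\[
\varepsilon_{S, G_m} \;\geq\; \frac{2(d_m - 2)}{d_m} \;=\; 2 - \frac{4}{d_m} \;\geq\; \frac{2}{3}.
\]
Hence $\limsup_{n \to \infty} \varepsilon_{S, H_n} \geq \tfrac{2}{3}$ for every $S \geq 1$, so
\[
\lim_{S \to \infty} \limsup_{n \to \infty} \varepsilon_{S, H_n} \;\geq\; \tfrac{2}{3} \;>\; 0,
\]
and by Theorem~\ref{thm:property A limit of finite graphs}, $G$ fails to have property~A.

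There is essentially no hard step here: all of the analytical effort has been absorbed into Theorem~\ref{thm:girth}. The one subtlety worth emphasizing is that the lower bound $2 - 4/d_m$ is \emph{independent} of the scale $S$, which is precisely what makes the double limit stay bounded away from zero; the hypothesis $d_n \geq 3$ enters only through this final estimate, and the hypothesis $c_n \to \infty$ is what lets us pick, for each fixed $S$, components $G_m$ to which Theorem~\ref{thm:girth} may be applied.
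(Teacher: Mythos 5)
Your proof is correct and follows essentially the same route as the paper: decompose $G$ into the convex finite subgraphs $H_n=\coprod_{m=1}^n G_m$, apply Theorem~\ref{thm:girth} to the components once $c_m>2S+1$, and invoke Theorem~\ref{thm:property A limit of finite graphs}. Your final step is in fact a slight streamlining: the uniform bound $\varepsilon_{S,G_m}\ge 2-\tfrac{4}{d_m}\ge\tfrac23$, valid for every $S$, lets you avoid the paper's case split on whether $(d_n)$ is bounded (where a constant-degree subsequence is extracted and a limit in $S$ is computed).
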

\begin{proof} 
Fix $S \geq 0$. 
Let $N \in \mathbb{N}$ be such that for each $n \geq N$ we have $c_n > 2S + 1$ .
From Theorem~\ref{thm:girth}, for each $n \geq N$, we have
\[
  \varepsilon_{S, G_n} = \frac{2(d_n-1)^S(2-d_n)}{2-d_n(d_n-1)^S}.
\]

Graph $H_n = \coprod_{i=1}^n G_n$ is a finite convex subgraph of $\coprod_{n \in \mathbb{N}} G_n$.
We have
\[
  \varepsilon_{S, H_n} \geq \max \left\{ \frac{2(d_i-1)^S(2-d_i)}{2-d_i(d_i-1)^S} \colon N \leq i \leq n \right\}.
\]

If $d_n$ is bounded, then there exists an increasing sequence $i_1, i_2, i_3, \ldots$, with $i_n \geq N$, such that $d_{i_j} = d_{i_k}$ for each $j, k \in \mathbb{N}$.
Then
\[
  \limsup_{n \to \infty} \varepsilon_{S, H_n} \geq \lim_{n \to \infty} \varepsilon_{S, H_{i_n}} \geq \frac{2(d-1)^S(2-d)}{2-d(d-1)^S},
\]
for $d \geq 3$ that does not depend on $S$.
Then
\[
  \lim_{S \to \infty} \limsup_{n \to \infty} \varepsilon_{S, H_n} \geq 
  \lim_{S \to \infty} \frac{2(d-1)^S(2-d)}{2-d(d-1)^S} = 2 - \frac 4d \geq 2 - \frac 43 = \frac 23.
\]
By Theorem~\ref{thm:property A limit of finite graphs}, $\coprod_{n \in \mathbb{N}} G_n$ does not have property~A.

If $d_n$ is unbounded, then 
\[
\limsup_{n \to \infty} \varepsilon_{S, H_n} \geq 
  \lim_{d \to \infty} \frac{2(d-1)^S(2-d)}{2-d(d-1)^S} = 2
\]
and
\[
\lim_{S \to \infty} \limsup_{n \to \infty} \varepsilon_{S, H_n} = 2.
\]
By Theorem~\ref{thm:property A limit of finite graphs}, $\coprod_{n \in \mathbb{N}} G_n$ does not have property A.
\end{proof}

\subsection{Cheeger constant of a regular graph with large girth}

We will compute weighted isoperimetric inequalities for subsets of $d$-regular trees that do not contain leaves of the tree.
This allows us to compute the Cheeger constant at scale $S$ for the graph from Theorem~\ref{thm:girth} and obtain a lower bound on the value of $\varepsilon_{S, G}$.

Fix $d\ge 3$. 
Let $T = (V, E)$ be a $d$-regular tree of depth $S+1$.
Let $r$ denote the root of $T$.
Let $U \subset V$ such that $U \subset B(r, S)$ ($U$ does not contain leaves of $T$).
Let $n = |U|$. Assume that $U$ has $k$ connected components.
Since $T$ is $d$-regular, we have
\[
  | \partial U | = d n - 2(n - k)
\]
Therefore
\[
\varphi(U) = \frac{|\partial U|}{|U|} = \frac{(d-2)n + 2k}{n}.
\]
We have
\[
  \min_U \varphi(U) = d-2 + 2\frac 1n,
\]
for $n = |B(r, S)| = \frac{2 - d(d-1)^S}{2-d}$.
Therefore
\[
  \min_{U \subset B(r, S)}  \frac{|\partial U|}{|U|} = d - 2 + \frac {2(2-d)}{2 - d(d-1)^S}
\]

Let $G$ be a $d$-regular graph with girth $c$ and let $S \geq 0$ such that $2S+1 < c$.
Hence the Cheeger constant of $G$ at scale $S$ is equal to
\[
\gamma(G, S) = d - 2 + \frac {2(2-d)}{2 - d(d-1)^S} =
  (2-d) (\frac 2{2- d(d-1)^S} - 1) =
  \frac {(2-d)d(d-1)^S}{2 - d(d-1)^S}.
\]

By Theorem~\ref{thm:cheeger}, we have
\[
\varepsilon_{S, G} \geq \frac{m}{\frac{dm}2} \frac {(2-d)d(d-1)^S}{2 - d(d-1)^S} =
  \frac{2(2-d)(d-1)^S}{2 - d(d-1)^S}.
\]

\subsection{The primal solution}

Let $G = (V, E)$ be a $d$-regular graph with girth $c$.
Let $|V| = n$. We have $|E| = \frac{nd}2$. 
For each $i \in V$ we have $S_i = B(i, S)$.
Since $G$ is $d$-regular and $2S + 1 < c$, we have
\[
|S_i| = \frac{2-d(d-1)^S}{2-d} \text{ for each } i \in V
\]
and
\[
| (S_i \setminus S_j) \cup (S_j \setminus S_i) | = 2(d-1)^S \text{ for each } ij \in E.
\]

To obtain the primal solution, set
\[
\xi_i = \frac{\chi_{S_i}}{|S_i|}.
\]
For each $ij \in E$ we have
\[
\| \xi_i - \xi_j \|_1 = 
  | (S_i \setminus S_j) | \frac 1{|S_i|} + 
  | (S_j \setminus S_i) | \frac 1{|S_j|} =
  \frac{2(d-1)^S(2-d)}{2-d(d-1)^S}.
\]

This shows that
\[
\varepsilon_{S, G} \leq \frac{2(d-1)^S(2-d)}{2-d(d-1)^S},
\]
which completes the proof of Theorem~\ref{thm:girth}.

\subsection{The curious case of infinite trees}
At first glance, it appears that the argument above proves that infinite regular trees fail to have property~A, which is false. 
We wish to explain why this argument does not prevent trees from having property~A. 
We observe that in regular graphs with large girth, every vertex ``looks like'' the 
root of a regular tree and so for any ball $B$, one can 
estimate $|\partial B|/|B|$ as above.
On the other hand, to apply our argument to an infinite tree, we have 
to restrict our attention to a finite portion of it, using Theorem~\ref{thm:property A limit of finite graphs}.
For a ball $B$ around a vertex near 
the leaves of 
such a tree (see Figure~\ref{fig:trees}), it is not possible to bound the 
ratio $|\partial B|/|B|$ away from $0$.

\begin{figure}
  \centering
  \includegraphics[width=\textwidth]{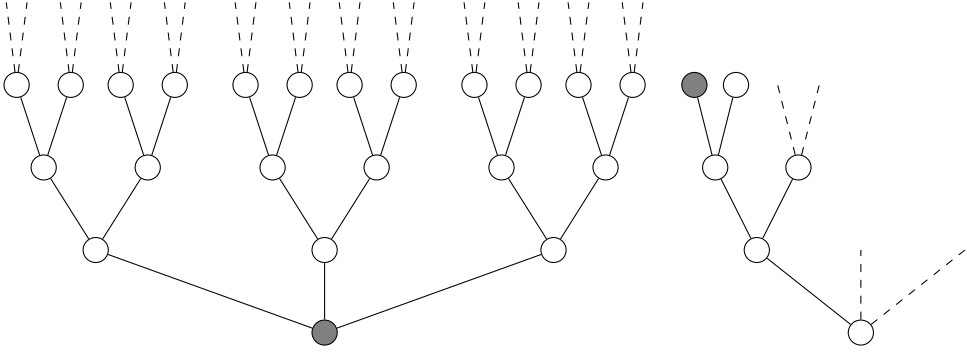}
  \caption{The ball of radius $3$ based at the root of the subset $S$ a tree (left) compared to a ball based at a leaf (right).}
  \label{fig:trees}
\end{figure}

\section{Proof of Theorem~\ref{thm:property A limit of finite graphs}}

\begin{lemma}\label{lem:subgraph}
  Let $G$ be a (possibly infinite) graph.
  Let $H$ be a convex finite subgraph of $G$.
  Let $S \geq 0$.
  We have
  \[
    \varepsilon_{2S, H} \leq \varepsilon_{S, G},
  \]
  where $\varepsilon_{2S, H}$ and $\varepsilon_{S, G}$ are optimal solutions
    of Optimization Problem~\ref{op:measures} for graphs $H$ and $G$ at scales $2S$ and $S$ respectively.
\end{lemma}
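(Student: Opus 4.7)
The plan is to push an admissible family of measures on $G$ at scale $S$ forward to $V(H)$ along a nearest-point projection, incurring only a factor of two in the scale. First I would fix $\delta > 0$ and pick an admissible family $\{\xi_i\}_{i \in V(G)}$ for Optimization Problem~\ref{op:measures} on $G$ at scale $S$ with variation at most $\varepsilon_{S,G} + \delta$; the slack $\delta$ handles the case where the infimum is not attained on an infinite $G$. Next I would choose a map $\pi \colon V(G) \to V(H)$ that sends each vertex $k$ lying in a connected component of $G$ meeting $V(H)$ to some nearest vertex of $V(H)$ in the path-length metric of $G$; such a nearest vertex exists because $V(H)$ is finite. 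For $i \in V(H)$, I would then set $\eta_i = \pi_{*}\xi_i$, i.e.\ $\eta_i(l) = \sum_{k \in \pi^{-1}(l)} \xi_i(k)$.

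The verification then has three ingredients. The probability measure conditions $\eta_i \geq 0$ and $\|\eta_i\|_1 = 1$ are immediate from the definition of pushforward. The variation bound follows from the fact that pushforward does not increase $\|\cdot\|_1$ norms of differences---the triangle inequality applied fibrewise gives $\|\pi_{*}\xi_i - \pi_{*}\xi_j\|_1 \leq \|\xi_i - \xi_j\|_1$---combined with $E(H) \subseteq E(G)$, so each edge constraint in $H$ inherits its bound from the corresponding constraint in $G$.

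The only substantive step is checking the support condition $\supp \eta_i \subseteq B_H(i, 2S)$, and this is where the hypotheses of the lemma do their work. For $k \in \supp \xi_i$ we have $d_G(i,k) \leq S$; since $i$ itself lies in $V(H)$, $k$ has a nearest vertex of $V(H)$ at $G$-distance at most $S$, hence $d_G(k,\pi(k)) \leq S$ and $d_G(i,\pi(k)) \leq 2S$ by the triangle inequality. Convexity of $H$ in $G$ then promotes this to $d_H(i,\pi(k)) \leq 2S$, which is exactly the support condition at scale $2S$. This is the main point of the argument: without convexity $\pi(k)$ could sit close to $i$ in $G$ but far from it in $H$. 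Letting $\delta \to 0$ at the end yields $\varepsilon_{2S, H} \leq \varepsilon_{S, G}$. I do not anticipate a serious obstacle; the factor of two in the scale is intrinsic, arising from the two triangle-inequality steps $i \to k \to \pi(k)$.
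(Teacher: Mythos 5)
Your proof is correct, but it takes a genuinely different route from the paper's. You argue entirely on the primal side: push an $(\varepsilon_{S,G}+\delta)$-admissible family of measures on $G$ forward to $V(H)$ along a nearest-point projection $\pi$, and check admissibility for $H$ at scale $2S$; the pushforward is an $\ell_1$-contraction on differences, $E(H)\subseteq E(G)$ handles the edge constraints, and the only substantive point is the support condition, which follows from two triangle-inequality steps together with the fact that convexity of $H$ makes $d_H$ agree with $d_G$ on $V(H)$. The paper instead argues on the dual side: it first observes that the weak-duality inequality of Theorem~\ref{thm:duality} survives for infinite $G$, then takes an optimal dual (pseudo-flow) solution for $H$ with respect to the truncated family $\mathcal{S}' = \{B_G(v,S)\cap V_H\}_{v\in V(G)}$, extends it by zero to an admissible dual solution on $G$ at scale $S$, and finally compares $\mathcal{S}'$ with the scale-$2S$ balls of $H$ via the isoperimetric reformulation (Linear Problem~\ref{lp:isoperimetric}): since $\mathcal{S}'$ refines $\{B_H(v,2S)\}_{v\in V_H}$ --- the same triangle-inequality-plus-convexity fact you use --- the latter problem has more constraints and hence a smaller optimum. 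Your version is more elementary and self-contained (no duality, no lift-and-project), and your $\delta$-slack correctly handles the possibility that the infimum is not attained on an infinite $G$; the paper's version is less direct but reuses the machinery it has already built and stays within the dual picture the paper is promoting. Both arguments isolate the same geometric content, and both make clear why the doubling of the scale is unavoidable.
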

\begin{proof}
First we observe that the weak duality part of Theorem~\ref{thm:duality} still holds even if the graph $G$ is infinite since the sums in question are absolutely convergent.

%Let $\xi, \varepsilon$ be an admissible solution of Optimization Problem~\ref{op:measures} for a (possibly infinite) graph $G$ at scale $S$.
%Let $\kappa, \eta, \varphi$ be an admissible solution of Optimization Problem~\ref{op:pseudo-flows} for $G$ at scale $S$.
%Since $\kappa_{ij} \geq |\varphi_{k, ij}|$, for each edge $ij$, we have
%\[
%\kappa_{ij} \cdot \varepsilon \geq \sum_{k \in V} \varphi_{k, ij} (\xi_j(k) - \xi_i(k)).
%\]
%The sum is absolutely convergent because $\| \xi_j - \xi_i \|_1 \leq \varepsilon < \infty$ and $\varphi_{k, ij}$ is bounded (by $\kappa_{ij}$).
%
%The condition $\sum_{ij\in E}\kappa_{ij}\le 1$ implies that for every $l\in V$,
%\[
%\varepsilon \ge \sum_{ij \in E} \sum_{k \in V} \varphi_{k, ij} (\xi_j(k) - \xi_i(k)) =
%\sum_{i \in V} \sum_{k \in V} \xi_i(k) \left( \sum_{mi \in E} \varphi_{k, mi} - \sum_{im \in E} \varphi_{k, im} \right) 
%\geq
%\]
%\[
%\geq \sum_{i \in V} \sum_{k \in V} \xi_i(k) \eta_i
%= \sum_{i \in V} \eta_i \sum_{k \in V} \xi_i(k) = \sum_{i \in V} \eta_i. 
%\]
%Once again each sum is absolutely convergent. In particular, for each $i \in V$, we have $\eta_i \leq \sum_{mi \in E} \kappa_{mi} + \sum_{im \in E} \kappa_{im}$, so $\sum_{i \in V} \max (0, \eta_i) \leq 2\sum_{ij \in E} \kappa_{ij} = 2$ and $\sum_{i \in V} \eta_i$ is convergent from the problem definition, which implies absolute convergence. 
%
Thus, if both $\varepsilon =\varepsilon_{S, G}$ and $\sum_{i \in V} \eta_i$ are optimal, then 
\[
  \varepsilon_{S, G} \geq \sum_{i \in V} \eta_i.
\]

Observe that the definition of Optimization Problem~\ref{op:pseudo-flows} may be extended so that the scale $\mathcal{S}$ is an arbitrary family of subsets of the vertex set, not necessarily indexed by vertices of the underlying graph. (This is not the case for the primal problem, where we want to measure the variance of probability measures on adjacent vertices.)

Let $H = (V_H, E_H)$ and let $\mathcal{S'} = \{ B_G(v, S) \cap V_H \}_{v \in V}$
be the scale $S$ on $G$ truncated to the subgraph $H$.
Let $\eta' \colon V_H \to \mathbb{R}, \kappa' \colon E_H \to \mathbb{R}, \{ \varphi'_i \colon E_H \to \mathbb{R} \}_{i \in V}$ be a solution of Optimization Problem~\ref{op:pseudo-flows} for the family $\mathcal{S'}$.
We may extend $\eta', \kappa', \varphi'$ to an admissible solution of Optimization Problem~\ref{op:pseudo-flows} for $G$ at scale $S$, by setting all missing values to $0$.
This shows that
\[
\sum_{i \in V} \eta_i \geq \sum_{i \in V} \eta'_i.
\]

To finish the argument we appeal to the projection of Optimization Problem~\ref{op:pseudo-flows} to Linear Problem~\ref{lp:isoperimetric} (for the finite graph $H$).
Clearly, Linear Problem~\ref{lp:isoperimetric} may be reformulated for an arbitrary family of subsets of $V_H$, not necessarily indexed by vertices of $H$.
Optimization Problem~\ref{op:pseudo-flows} for $H$ at scale $\mathcal{S'}$ is equivalent to Linear Problem~\ref{lp:isoperimetric} for the family $\mathcal{S'}$.
Clearly, if $\mathcal{S'}$ refines $\mathcal{S}$, then the optimal solution of Linear Problem~\ref{lp:isoperimetric} for $\mathcal{S'}$ will be greater than or equal to the solution of Optimization Problem~\ref{op:pseudo-flows} for $\mathcal{S}$ (we have more constraints in the second case).
If $\mathcal{S} = \{ B_H(v, 2S) \}_{v \in V_H}$, then $\mathcal{S'}$ refines $\mathcal{S}$ (since $H$ is a convex subgraph of $G$, the path-length metric on $H$ is identical to the path-length metric restricted from $G$).
Therefore
\[
\sum_{i \in V} \eta'_i \geq \varepsilon_{2S, H}.
\]
\end{proof}

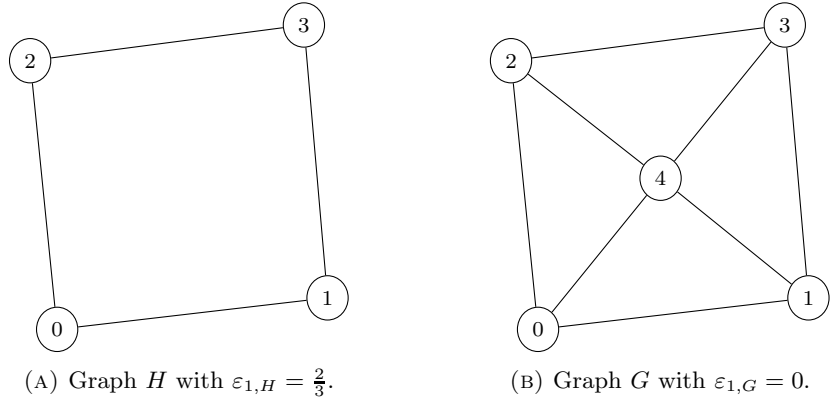
\begin{figure}[H]
\centering
\begin{subfigure}{.5\textwidth}
\centering
\begin{tikzpicture}[>=latex,line join=bevel,font=\footnotesize,scale=0.8]
  \node (0) at (39.901bp,17.989bp) [draw,ellipse] {0};
  \node (1) at (165.9bp,32.786bp) [draw,ellipse] {1};
  \node (2) at (27.307bp,144.22bp) [draw,ellipse] {2};
  \node (3) at (154.89bp,161.0bp) [draw,ellipse] {3};
  \draw [] (0) -- (1);
  \draw [] (0) -- (2);
  \draw [] (1) -- (3);
  \draw [] (2) -- (3);
\end{tikzpicture}
\caption{Graph $H$ with $\varepsilon_{1, H} = \frac 23$.}
\end{subfigure}%
\begin{subfigure}{.5\textwidth}
\centering
\begin{tikzpicture}[>=latex,line join=bevel,font=\footnotesize,scale=0.8]
  \node (0) at (39.901bp,17.989bp) [draw,ellipse] {0};
  \node (1) at (165.9bp,32.786bp) [draw,ellipse] {1};
  \node (2) at (27.307bp,144.22bp) [draw,ellipse] {2};
  \node (3) at (154.89bp,161.0bp) [draw,ellipse] {3};
  \node (4) at (96.9995bp,88.99875bp) [draw,ellipse] {4};
  \draw [] (0) -- (1);
  \draw [] (0) -- (2);
  \draw [] (1) -- (3);
  \draw [] (2) -- (3);
  \draw [] (0) -- (4);
  \draw [] (1) -- (4);
  \draw [] (2) -- (4);
  \draw [] (3) -- (4);
\end{tikzpicture}
\caption{Graph $G$ with $\varepsilon_{1, G} = 0$.}
\end{subfigure}
\caption{The increase of scale in subgraph $H$ of graph $G$ in Lemma~\ref{lem:subgraph} is necessary. Graph $H$ is a convex subgraph of a graph $G$ and it has larger $\varepsilon$ value at scale $S = 1$. This is caused by the set $\{ 0,1,2,3 \}$ from the scale $S=1$ on $G$ restricted to $H$ that is not in the scale $S=1$ of $H$.}
\end{figure}

We are now ready to prove Theorem~\ref{thm:property A limit of finite graphs}.
{
\renewcommand{\thetheorem}{\ref{thm:property A limit of finite graphs}}
\begin{theorem}
  Let $G$ be a countably infinite locally finite graph endowed with the path-length metric.
  Let $G_1 \subset G_2 \subset G_3 \subset \cdots$ be an ascending sequence of 
  convex finite subgraphs of $G$ such that $G = \bigcup_{n \in \mathbb{N}} G_n$.
  Let $\varepsilon_{S, G_n}$ be the minimal variation of probability
  measures at scale $S$ for graph~$G_n$, i.e. the optimal solution of Optimization Problem~\ref{op:measures} for $G_n$ at scale $S$.
  Graph $G$ has property $A$ iff
  \[
  \lim_{S \to \infty} \limsup_{n \to \infty} \varepsilon_{S, G_n} = 0.
  \]
\end{theorem}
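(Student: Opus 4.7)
The plan is to prove both implications, using Lemma~\ref{lem:subgraph} for the forward direction and a diagonal compactness argument on probability measures for the converse.

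For the forward direction, assume $G$ has property~A, i.e.\ $\varepsilon_{S,G}\to 0$ as $S\to\infty$. Each $G_n$ is a convex finite subgraph of $G$, so Lemma~\ref{lem:subgraph} gives $\varepsilon_{2S,G_n}\leq \varepsilon_{S,G}$ for every $n$ and every $S$. The optimal value $\varepsilon_{S,G_n}$ is nonincreasing in $S$, since enlarging the support only relaxes the constraints of Optimization Problem~\ref{op:measures}. Combining these, $\limsup_{n}\varepsilon_{S,G_n}\leq \varepsilon_{\lfloor S/2\rfloor,\,G}$ for every $S\geq 2$, and letting $S\to\infty$ yields $\lim_{S\to\infty}\limsup_{n}\varepsilon_{S,G_n}=0$.

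For the converse, fix $\epsilon>0$ and choose $S$ and $N$ such that $\varepsilon_{S,G_n}<\epsilon$ for every $n\geq N$. For each such $n$ pick an optimal family $\{\xi_i^n\}_{i\in V(G_n)}$ realizing $\varepsilon_{S,G_n}$, and extend each $\xi_i^n$ by zero to all of $V(G)$. Because $G_n$ is convex in $G$, we have $B_{G_n}(i,S)=B_G(i,S)\cap V(G_n)$, so for every $i\in V(G)$ and every $n$ large enough to contain $i$, $\supp\xi_i^n\subset B_G(i,S)$, which is a fixed finite set by local finiteness. Hence each sequence $(\xi_i^n)_n$ is confined to a compact simplex of probability measures on this finite set. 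Enumerating $V(G)=\{v_1,v_2,\dots\}$ and extracting a diagonal subsequence produces one along which $\xi_i^n\to\xi_i$ pointwise, and therefore in $\ell_1$, for every $i\in V(G)$.

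It remains to check that $\{\xi_i\}_{i\in V(G)}$ is admissible for Optimization Problem~\ref{op:measures} on $G$ at scale $S$ with variation at most $\epsilon$. The conditions $\xi_i\geq 0$, $\|\xi_i\|_1=1$, and $\supp\xi_i\subset B_G(i,S)$ pass to the limit because the supports sit inside the common finite set $B_G(i,S)$. For any edge $ij\in E(G)$ the endpoints eventually lie in $V(G_n)$, and convexity forces $ij\in E(G_n)$ (an edge of $G$ is itself a geodesic and so must be retained by any convex subgraph containing its endpoints). Thus $\|\xi_i^n-\xi_j^n\|_1\leq\varepsilon_{S,G_n}<\epsilon$, and this inequality survives in the limit, giving $\varepsilon_{S,G}\leq\epsilon$. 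Since $\epsilon$ was arbitrary, $\lim_{S\to\infty}\varepsilon_{S,G}=0$, i.e.\ $G$ has property~A.

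The principal subtlety, rather than a genuine obstacle, is bookkeeping of supports: convexity of the $G_n$ is what lets us stay at scale $S$ throughout the extraction (avoiding the factor-of-two loss that appears in Lemma~\ref{lem:subgraph}) and what guarantees that edges of $G$ appear in $G_n$ as soon as their endpoints do, so that the variation bound can be pulled back from the finite approximations to the limit.
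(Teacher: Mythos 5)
Your proof is correct and follows essentially the same route as the paper's: Lemma~\ref{lem:subgraph} handles the forward implication (with the same factor-of-two adjustment of the scale), and the converse is the same diagonal compactness extraction of pointwise limits of the optimal measures $\xi_i^n$, justified by local finiteness and convexity of the $G_n$. The only differences are cosmetic---you make explicit the monotonicity of $\varepsilon_{S,G_n}$ in $S$ and the observation that convexity forces edges of $G$ into $G_n$ once their endpoints appear, details the paper leaves implicit.
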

\addtocounter{theorem}{-1}
}
\begin{proof}[Proof of Theorem~\ref{thm:property A limit of finite graphs}]
Let $G_n = (V_n, E_n)$. If $G$ has property A, then
\[
  \lim_{S \to \infty} \varepsilon_{S, G} = 0
\]
By Lemma~\ref{lem:subgraph}, for each $n$ we have
\[
  \varepsilon_{2S, G_n} \leq \varepsilon_{S, G}.
\]
Hence
\[
  0 \leq \lim_{S \to \infty} \limsup_{n \to \infty} \varepsilon_{S, G_n} \leq
    \lim_{S \to \infty} \lim_{n \to \infty} \varepsilon_{S/2, G} = 0.
\]

To prove the implication in the opposite direction, assume that
\[
  \lim_{S \to \infty} \limsup_{n \to \infty} \varepsilon_{S, G_n} = 0.
\]
Let $\{ \xi^n_i \colon V_n \to \mathbb{R} \}_{i \in V_n}$ be an optimal solution of Optimization Problem~\ref{op:measures} for $G_n$ at scale $S$. 
Using a standard diagonal argument we may pick a subsequence $n_1, n_2, \ldots$ such that for each $i, j \in V$ the limit
\[
  \xi_i(j) = \lim_{m \to \infty} \xi^{n_m}_i(j)
\]
exists (the sequence may be undefined for small values of $m$).
\begin{enumerate}
    \item $\xi_i(j) \geq 0$ since for large enough $n$ (once $i, j \in V_n$) we have $\xi^n_i(j) \geq 0$.
    \item $\supp \xi_i \subset B(i, S)$ since for each $j \not\in B(i, S)$ and large enough $n$ we have $\xi^n_i(j) = 0$ (from convexity of $G_n$, we have $B_{G_n}(i, S) = B_G(i, S) \cap V_n$).
    \item Since $\supp \xi_i$ is finite and $\supp \xi^{n_m}_i \subset \supp \xi_i$, we have
    \[
    \| \xi_i \|_1 = \| \lim_{m \to \infty} \xi^{n_m}_i \|_1 = \lim_{m \to \infty} \| \xi^{n_m}_i \|_1 = 1,
    \]
    for each $i \in V$.
    \item Similarly, for each $ij \in E$,
    \[
      \| \xi_i - \xi_j \|_1 = \| \lim_{m \to \infty} \xi^{n_m}_i - \lim_{m \to \infty} \xi^{n_m}_j \|_1 = \lim_{m \to \infty} \| \xi^{n_m}_i - \xi^{n_m}_j \|_1 \leq 
      \limsup_{m \to \infty} \varepsilon_{S, G_m}.
    \]
\end{enumerate}
Therefore $\{ \xi_i \}$ is a solution of Optimization Problem~\ref{op:measures} for $G$ at scale $S$ with $\varepsilon_S = \limsup_{n \to \infty} \varepsilon_{S, G_n}$.
By the assumption $\lim_{S\to \infty} \varepsilon_S = 0$, so $G$ has property A.
\end{proof}

\bibliographystyle{plain}
\bibliography{references.bib}
\end{document}